\newcommand{\Z}{\mathbb Z}
\newcommand{\Q}{\mbox{$\mathbb Q$}}
\newcommand{\calL}{\mathcal L}
\newcommand{\surj}{\twoheadrightarrow}
\newcommand{\inj}{\hookrightarrow}
\newcommand\ran{{\mathrm{rank}}}
\newcommand\cora{{\mathrm{corank}}}
\newcommand{\K}{K_\infty}
\newcommand{\f}{F_\infty}
\newcommand{\QQ}{\mathbb{Q}}
\newcommand{\N}{\mathbb{N}}
\newcommand{\FF}{\mathbb{F}}
\newcommand{\Hom}{\mathrm{Hom}}
\newcommand{\Ext}{\mathrm{Ext}}
\newcommand{\gal}{\mathrm{Gal}}
\newcommand{\cyc}{\mathrm{cyc}}
\newcommand{\Ker}{\mathrm{Ker}}
\newcommand{\coker}{\mathrm{Coker}}
\newcommand{\corank}{\mathrm{corank}}
\newcommand{\fla}{\operatorname{fl}}
\newcommand{\lra}{\longrightarrow}
\newcommand{\ra}{\rightarrow}
\newcommand{\ilim}{\varinjlim}
\newcommand{\plim}{\varprojlim}
\newcommand\cyr{%
\renewcommand\rmdefault{wncyr}
\renewcommand\sfdefault{wncyss}
\renewcommand\encodingdefault{OT2}
\normalfont\selectfont}
\DeclareTextFontCommand{\textcyr}{\cyr}
\newtheorem{theorem}{Theorem}[section]
\newtheorem{proposition}[theorem]{Proposition}
\newtheorem{lemma}[theorem]{Lemma}
\newtheorem{remark}[theorem]{Remark}
\newtheorem{corollary}[theorem]{Corollary}
\newtheorem{definition}[theorem]{Definition}
\newtheorem{conjecture}{Conjecture}
\newtheorem*{theorem*}{Theorem}
\newtheorem{example}{Example}
\numberwithin{theorem}{section}
\theoremstyle{definition}
\theoremstyle{plain}
\newtheorem*{namedthm}{\namedthmname}
\newcounter{namedthm}
\newtheoremstyle{named}{}{}{\itshape}{}{\bfseries}{.}{.5em}{\thmnote{#3's }#1}
\theoremstyle{named}
\newtheorem*{corollary*}{Corollary}
 \DeclareFontFamily{U}{wncy}{}
\DeclareFontShape{U}{wncy}{m}{n}{<->wncyr10}{}
\DeclareSymbolFont{mcy}{U}{wncy}{m}{n}
\DeclareMathSymbol{\Sh}{\mathord}{mcy}{"58} 
\newtheoremstyle{named}{}{}{\itshape}{}{\bfseries}{.}{.5em}{\thmnote{#3's }#1}
\theoremstyle{named}
\newtheorem*{conjecture*}{Conjecture}
\newtheorem{conj}{Conjecture 1}
\newcommand{\customlabel}[2]{%
	\protected@write \@auxout {}{\string \newlabel {#1}{{#2}{\thepage}{#2}{#1}{}} }%
	\hypertarget{#1}{#2}
}
\newcommand{\Keywords}[1]{\par\noindent
{\small{Keywords and phrases}: #1}}
\newcommand{\AMS}[1]{\par\noindent
{\small{AMS Subject Classification}: #1}}
\author{Sohan Ghosh, Somnath Jha, Sudhanshu Shekhar}
\address{Department of Mathematics and Statistics, IIT Kanpur, Kanpur 208016, India}
\email{gsohan@iitk.ac.in, jhasom@iitk.ac.in, sudhansh@iitk.ac.in }
\begin{document}
\title{Iwasawa theory of fine Selmer groups over global fields}
\begin{abstract}
The $p^\infty$-fine Selmer group of an elliptic curve $E$ over a number field $F$ is a subgroup of the classical $p^\infty$-Selmer group of $E$ over $F$. Fine Selmer group  
is closely related to the 1st and 2nd Iwasawa cohomology groups.  
Coates-Sujatha observed that the  structure of  the fine Selmer group of $E$ over a $p$-adic Lie extension  of a number field is intricately  related to some deep questions in classical Iwasawa theory; for example, Iwasawa's classical $\mu$-invariant vanishing conjecture. In this article, we study the properties of the $p^\infty$-fine Selmer group of an elliptic curve over  certain $p$-adic Lie extensions of a number field. We also define  and discuss $p^\infty$-fine Selmer group of an elliptic curve  over  function fields of characteristic $p$ and also of characteristic $\ell \neq p.$ We relate our study  with a conjecture of Jannsen. 
\end{abstract}	
\maketitle
\let\thefootnote\relax\footnotetext{
\AMS{11R23,  11G05, 11S25, 11R60}
\Keywords {\begin{footnotesize}{Iwasawa theory,  fine Selmer groups, Iwasawa's $\mu=0$ conjecture, function fields.}\end{footnotesize}}
}
\section*{Introduction}
We fix  an odd prime  $p$ throughout.
Let $E$ be an elliptic curve defined over a number field $F$ and let $S(E/F)$ be the $p^\infty$-Selmer group of $E$ over $F$ (Definition \ref{defn1.1}). The $p^\infty$-fine Selmer group $R(E/F)$ is a subgroup of $S(E/F)$ (Definition \ref{defn1.1}), obtained by putting stringent local {conditions} at primes dividing $p$. Let us assume that $E/\Q$ has good, ordinary reduction at  $p$ and $\Q_\cyc$ be the cyclotomic $\Z_p$ extension of $\Q$. Let $u$ be the unique prime in $\Q_\cyc$ dividing $p$ and $\Q_{\cyc,u}$ be the completion at $u$.  It is known 
 that $H^2(G_S(\Q_\cyc), E_{p^\infty})=0$  \cite{ka2}.  Then  the following exact sequence can be deduced from  \S \ref{section2}, equations \eqref{equation3} and \eqref{eq6}: 
\begin{small}{$$0\rightarrow H^1_\mathrm{Iw}(T_pE/\QQ_\cyc) \rightarrow (E(\QQ_{\cyc,u}) \otimes \Q_p/\Z_p)^{\vee} \rightarrow S(E/\Q_\cyc)^{\vee}\rightarrow R(E/\Q_\cyc)^\vee\rightarrow 0.$$}\end{small}
  Using Kato's Euler system \cite{ka2} and the Perrin-Riou-Coleman map, from the above equation, we  have an alternative formulation of the Iwasawa main conjecture involving $R(E/\Q_\cyc)^\vee$ (see  \cite[Theorem 16.6.2]{ka2}). 
The fine Selmer group has been studied in Iwasawa theory throughout, under different names, like $\Sh_1,\text{Sel}_0$ by Billot, Greenberg, Kurihara \cite{ku}, Perrin-Riou \cite{pr} and others over the cyclotomic $\Z_p$ extensions of number fields. Coates-Sujatha \cite{cs}  formally defined { the} fine Selmer group of an  elliptic curve and we largely follow their notation in this article. They observed  an important  relation between the structure of $R(E/F_\cyc)$ and  Iwasawa's $\mu=0$ conjecture  about the growth of the $p$-part of the ideal class group in the  cyclotomic tower for $F_\cyc$.
 
 \cite[{\bf Theorem 3.4}]{cs}\label{thm:iwawasa} {\it Let $E/F$ be an elliptic curve and $p$ be an odd prime such that  $F(E_{p^\infty})/F$ is a pro-$p$ extension. Then $R(E/F_\cyc)^{\vee}$ is a finitely generated $\Z_p$ module if and only if   Iwasawa's $\mu=0$ conjecture holds for $F_\cyc$.}

 Motivated by this,  \cite{cs} formulated the following conjecture:
 \begin{conjecture}\label{conja}
 	For all elliptic curves $E$ over $F$, $R(E/F_\cyc)^{\vee}$ is a finitely generated $\Z_p$ module.
 \end{conjecture}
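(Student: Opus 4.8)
The plan is to reduce Conjecture \ref{conja} to Iwasawa's classical $\mu=0$ conjecture by a base-change argument, and to locate that reduction as the essential difficulty. First, pass from $F$ to $L:=F(E_p)$. Since $p$ is odd, the kernel of the reduction map $\GL_2(\Z_p)\to\GL_2(\FF_p)$ is the principal congruence subgroup $\Gamma(p)=1+pM_2(\Z_p)$, which is pro-$p$; as the Galois representation on $T_pE$ identifies $\gal(F(E_{p^\infty})/L)$ with a closed subgroup of $\Gamma(p)$, the extension $F(E_{p^\infty})/L$ is pro-$p$. Hence $L$ satisfies the hypothesis of \cite[Theorem 3.4]{cs} (applied with $L$ in place of $F$), which gives that $R(E/L_\cyc)^{\vee}$ is a finitely generated $\Z_p$-module if and only if Iwasawa's $\mu=0$ conjecture holds for $L_\cyc$.

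The next step is to descend from $L_\cyc$ to $F_\cyc$. The extension $L_\cyc/F_\cyc$ is finite Galois with group $\gal(L_\cyc/F_\cyc)\cong\gal(L/F)\hookrightarrow\GL_2(\FF_p)$, and restriction of cohomology classes carries $R(E/F_\cyc)$ into $R(E/L_\cyc)$: the restriction of a locally trivial class at a prime $v\mid p$ is again locally trivial at the primes of $L_\cyc$ above $v$, and the local conditions at the remaining primes of $S$ are likewise compatible with restriction. By inflation--restriction the kernel of this map is contained in $H^1\bigl(\gal(L_\cyc/F_\cyc),E_{p^\infty}(L_\cyc)\bigr)$, which is finite because $\gal(L_\cyc/F_\cyc)$ is finite and $E_{p^\infty}(L_\cyc)$ is cofinitely generated over $\Z_p$, so this $H^1$ is killed by $[L:F]$ and cofinitely generated over $\Z_p$. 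Thus $R(E/F_\cyc)$ is an extension of a subgroup of $R(E/L_\cyc)$ by a finite group, so $R(E/F_\cyc)^{\vee}$ is an extension of a finite group by a quotient of $R(E/L_\cyc)^{\vee}$; in particular, if $R(E/L_\cyc)^{\vee}$ is finitely generated over $\Z_p$ then so is $R(E/F_\cyc)^{\vee}$. Combining the two steps, Conjecture \ref{conja} for $E/F$ follows from Iwasawa's $\mu=0$ conjecture for the cyclotomic $\Z_p$-extension of $F(E_p)$.

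The main obstacle is precisely this last input. Iwasawa's $\mu=0$ conjecture is itself open, and is known unconditionally only when the ground field is abelian over $\QQ$, by the Ferrero--Washington theorem. Since $F(E_p)$ is abelian over $\QQ$ only in very restrictive circumstances (roughly, this forces $E$ to have complex multiplication, with further constraints, or $p$ to be small), the argument above establishes Conjecture \ref{conja} outright only in such special cases; in general it merely reduces the conjecture to a case of $\mu=0$, equivalently to the expected finite generation over $\Z_p$ of the relevant second Iwasawa cohomology group --- which is the content of the conjecture of Jannsen discussed later in the article. The descent step causes no difficulty, as the kernel of restriction is an honestly finite group and no information is lost passing between $F_\cyc$ and $F(E_p)_\cyc$; the entire gap lies in the absence of an unconditional proof of $\mu=0$ over the (generally non-abelian) field $F(E_p)$.
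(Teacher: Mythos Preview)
The statement you are addressing is a \emph{conjecture}, not a theorem: the paper does not prove Conjecture~\ref{conja} and offers no proof to compare against. What the paper does is record the Coates--Sujatha result \cite[Theorem~3.4]{cs} immediately before the conjecture, which gives the equivalence with Iwasawa's $\mu=0$ conjecture under the hypothesis that $F(E_{p^\infty})/F$ is pro-$p$, and then states Conjecture~\ref{conja} as the natural expectation in general.

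Your proposal is not a proof either, and you say so clearly. The reduction you give is correct: passing to $L=F(E_p)$ puts you in the pro-$p$ situation where \cite[Theorem~3.4]{cs} applies, and the descent from $L_\cyc$ to $F_\cyc$ via inflation--restriction loses only a finite kernel, so Conjecture~\ref{conja} for $E/F$ would follow from Iwasawa's $\mu=0$ for $F(E_p)_\cyc$. This is a useful sharpening of the motivation the paper already records, but the essential obstruction you identify --- that $\mu=0$ is open outside the abelian case covered by Ferrero--Washington --- is exactly why the statement remains a conjecture. One small correction: your closing identification with ``the conjecture of Jannsen discussed later in the article'' is off. The Jannsen conjecture stated in the paper (Conjecture~\ref{ja}) concerns the Tate twists $V_pE(k)$ for $k\geq 1$ and is invoked for the Euler-characteristic computation over the false-Tate extension; it is not the same statement as Conjecture~\ref{conja}, nor is it equivalent to $\mu=0$ for $F(E_p)_\cyc$.
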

 Following Conjecture \ref{conja},  the fine Selmer group over { the} cyclotomic $\Z_p$ { extensions} of a number field have been studied  by various authors including \cite{wu, lim2,lm}, Jha-Sujatha and others. 

\medskip

\noindent {\bf Main results of the article: part (a): }{\it In this article, we
initiate the study of fine Selmer group of an elliptic curve over $p$-adic Lie extensions of function fields of characteristic  $p$ as well as  $\ell\neq p$. In one of our main results in this article, we show that the analogues of Conjecture \ref{conja} are true for the $p^\infty$-fine Selmer groups over  function fields of characteristic  $p$ (Theorem \ref{theo:muinvariant} and Corollary \ref{cormu}) and  characteristic $\ell \neq p$  (Remark \ref{mu0lneqp}), respectively.}

 Coates-Sujatha also studied the structure of the fine Selmer over more general  $p$-adic Lie extensions of a number field, instead of the cyclotomic $\Z_p$ extension, in the framework of so-called non-commutative Iwasawa theory \cite{cfksv}.  
Motivated by a conjecture of Greenberg \cite[Conjecture 3.5]{gr4},  the following was predicted regarding the structure of the fine Selmer group over an {\it admissible} (see Definition \ref{admis}) $p$-adic Lie extension of a number field:\begin{conjecture}\cite{cs}\label{conjb}
 Assume that the  Conjecture \ref{conja} holds for $E$ over  $F_\cyc$. Let $F_\infty$ be an admissible $p$-adic Lie extension  of  $F$ such that   $G=\gal(F_\infty/F)$ has dimension at least 2 as a p-adic Lie group. Then $R(E/F_\infty)^{\vee}$ is a pseudonull (defined in \S \ref{section2})  $\Z_p[[G]]$ module.
 \end{conjecture}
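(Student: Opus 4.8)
\medskip
\noindent\textbf{Sketch of a strategy.}
The plan is to recast Conjecture~\ref{conjb} as a single pseudonullity statement in Iwasawa cohomology, to reduce that statement to the vanishing of one rank, and to isolate that rank vanishing as an instance of Jannsen's conjecture --- which will be the genuine obstacle. Throughout set $\Lambda:=\Z_p[[G]]$ and $H:=\gal(F_\infty/F_\cyc)$, so that $G/H\cong\gal(F_\cyc/F)\cong\Z_p$; since $\dim G\geq 2$, the group $H$ is an infinite $p$-adic Lie group and $\dim\Lambda=\dim\Z_p[[H]]+1$. The fine Selmer group is cut out by the \emph{trivial} local conditions, hence equals the image of compactly supported cohomology $H^1_c(G_S(F_\infty),E_{p^\infty})$ inside $H^1(G_S(F_\infty),E_{p^\infty})$, the kernel of $H^1_c\to H^1$ being the cokernel of $H^0(G_S(F_\infty),E_{p^\infty})\to\bigoplus_{v\in S}H^0(F_{\infty,v},E_{p^\infty})$, a cofinitely generated $\Z_p$-module. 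Dualising, and invoking Iwasawa-theoretic Poitou--Tate duality together with the self-duality $\Hom(T_pE,\Z_p(1))\cong T_pE$, one obtains an exact sequence of $\Lambda$-modules
$$0\longrightarrow R(E/F_\infty)^\vee\longrightarrow H^2_{\mathrm{Iw}}(T_pE/F_\infty)\longrightarrow W\longrightarrow 0,\qquad H^2_{\mathrm{Iw}}(T_pE/F_\infty):=\varprojlim_n H^2(G_S(F_n),T_pE),$$
with $W$ finitely generated over $\Z_p$; as $\dim G\geq 2$, such a $W$ is pseudonull over $\Lambda$. Hence Conjecture~\ref{conjb} is \emph{equivalent} to the pseudonullity over $\Lambda$ of $H^2_{\mathrm{Iw}}(T_pE/F_\infty)$, an instance (in its Iwasawa-cohomological form) of Jannsen's conjecture for $T_pE$ over the admissible extension $F_\infty$.

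I would then reduce this to a rank computation. First, Conjecture~\ref{conja} forces $R(E/F_\infty)^\vee$ --- equivalently $H^2_{\mathrm{Iw}}(T_pE/F_\infty)$, since the two differ by a module finitely generated over $\Z_p$ --- into the category $\mathfrak{M}_H(G)$ of $\Lambda$-modules finitely generated over $\Z_p[[H]]$: this is the non-commutative reformulation of Conjecture~\ref{conja} due to Coates--Sujatha \cite{cs}, proved by comparing $R(E/F_\infty)^H$ with $R(E/F_\cyc)$ via inflation--restriction, the discrepancy being built from $H^i(H,E_{p^\infty}(F_\infty))$ and its local analogues at the primes above $p$ --- all cofinitely generated over $\Z_p$ because $E_{p^\infty}(F_\infty)$ is --- followed by dualisation and topological Nakayama over $\Z_p[[H]]$. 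Second, by the dimension theory of modules over the (possibly non-commutative) Iwasawa algebra $\Lambda$ (Venjakob; Ochi--Venjakob) together with $\dim\Lambda=\dim\Z_p[[H]]+1$, a module in $\mathfrak{M}_H(G)$ is pseudonull over $\Lambda$ \emph{if and only if} it is $\Z_p[[H]]$-torsion. Combining, Conjecture~\ref{conjb} becomes equivalent to the single assertion
$$\rk_{\Z_p[[H]]}\,H^2_{\mathrm{Iw}}(T_pE/F_\infty)=0 .$$

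This last vanishing is the crux, and the main obstacle: it does \emph{not} follow from Conjecture~\ref{conja} alone, because the $H$-coinvariants of $R(E/F_\infty)^\vee$ --- commensurable by the control theorem with $R(E/F_\cyc)^\vee$ --- may carry positive $\Z_p$-rank even when the module is $\Z_p[[H]]$-torsion, so the $\Z_p[[H]]$-rank has to be pinned down independently, and that is exactly Jannsen's conjecture, for which no unconditional proof is known in general. Where the proposal does go through unconditionally are: (a) $F_\infty$ contained in a $\Z_p^r$-extension of $F$ unramified outside $p$, where $H^2(G_S(F_\infty),E_{p^\infty})=0$ follows from Kato's results over $F_\cyc$ and the weak Leopoldt conjecture, and iterated control of $H^2_{\mathrm{Iw}}$ down the layers forces $\rk_{\Z_p[[H]]}=0$; and (b) certain admissible $\mathrm{GL}_2$-type extensions such as $F_\infty=F(E_{p^\infty})$, where one checks pseudonullity prime-by-prime at the height-one primes of $\Lambda$ --- each localisation being governed by the fine Selmer group over a subextension of $F_\infty$ of strictly smaller dimension, which by induction on $\dim G$, with base case $\dim G=1$ supplied by Conjecture~\ref{conja}, is torsion over the corresponding smaller Iwasawa algebra and hence vanishes after localisation; the delicate step there is comparing the localised module with that lower-dimensional fine Selmer group, which again requires the relevant weak Leopoldt / Jannsen input at the generic point of the codimension-one stratum. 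In all routes the single unresolved ingredient, and the principal difficulty, is Jannsen's pseudonullity conjecture for $T_pE$ over $F_\infty$.
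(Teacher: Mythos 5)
The statement you are addressing is Conjecture~\ref{conjb} itself, recorded in the paper (following Coates--Sujatha) as a \emph{conjecture}: the paper offers no proof of it over number fields, and only establishes analogues or partial evidence elsewhere (a function-field analogue in Theorem~\ref{pseudonullity}, a counterexample in the $\ell\neq p$ function-field setting in Example~\ref{example}, and conditional Euler-characteristic results over the false-Tate extension). Your proposal, by its own admission, is not a proof either: after reducing pseudonullity of $R(E/F_\infty)^{\vee}$ to the vanishing of the $\Z_p[[H]]$-rank of $H^2_{\mathrm{Iw}}(T_pE/F_\infty)$, you leave that vanishing conditional on Jannsen's conjecture (and on weak Leopoldt-type inputs in your cases (a) and (b)). That rank-vanishing \emph{is} the open content of the conjecture, so what you have produced is a chain of (essentially known) reformulations --- via the Poitou--Tate sequence \eqref{eq5}, the Coates--Sujatha passage to $\mathfrak M_H(G)$ under Conjecture~\ref{conja}, and Venjakob's criterion (Theorem~\ref{theo:pseudo}) --- rather than a proof; the genuine gap is exactly the step you flag as unresolved.

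One technical point in your reduction also needs repair. From \eqref{eq5}, the module interpolating between $R(E/F_\infty)^{\vee}$ and $H^2_{\mathrm{Iw}}(T_pE/F_\infty)$ is built from $\big(\underset{v\in S}\oplus K^0_v(E/F_\infty)\big)^{\vee}$ and $H^0(F_\infty,E_{p^\infty})^{\vee}$. The local summands are coinduced from decomposition subgroups, i.e.\ of the form $\Z_p[[G]]\otimes_{\Z_p[[G_w]]}\big(E_{p^\infty}(F_{\infty,w})^{\vee}\big)$, and when a prime of $F$ splits into infinitely many primes of $F_\infty$ these are \emph{not} finitely generated over $\Z_p$; their $\Z_p[[G]]$-codimension is governed by $\dim G_w$, which in an admissible extension is only guaranteed to be $\geq 1$ (since $F_\cyc\subset F_\infty$). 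So the assertion that your module $W$ is finitely generated over $\Z_p$ and hence pseudonull because $\dim G\geq 2$ is not correct as stated: in general one only gets $\Z_p[[G]]$-torsion, and the claimed equivalence between pseudonullity of $R(E/F_\infty)^{\vee}$ and of $H^2_{\mathrm{Iw}}(T_pE/F_\infty)$ requires either an extra argument or a restriction to situations where $\dim G_w\geq 2$ for all $v\in S$. This does not change the verdict --- the essential missing ingredient is the unproved pseudonullity/rank statement --- but even the preliminary equivalences in your sketch need this correction.
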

Following Conjecture \ref{conjb}, the properties of the fine Selmer group over  $p$-adic Lie extensions of a number field have been investigated  by various authors{,} including \cite{ lp, lp2,jh}, Lim and the third named authors of the article. 
In particular,  Lei and Palvannan in \cite{lp} and \cite{lp2}, have studied the pseudonullity of fine Selmer groups of elliptic curves and Hida families over the $\Z_p^2$ -extension of an imaginary quadratic field $K$, respectively.  The work of Hachimori-Sharifi \cite{hs} is also related to the Conjecture \ref{conjb}.

\medskip

\noindent {\bf Main results of the article: part (b): } {\it In another main result { of} this article, we establish analogues of Conjecture \ref{conjb} (Theorem \ref{pseudonullity} and Theorem \ref{theo:pseudomain3}) over  function fields of characteristic $p$. On the other hand, over  function fields of characteristic $\ell \neq p$, we give an explicit counterexample in Example \ref{example} to show that the analogue of Conjecture \ref{conjb} does not hold in that setting.}

Another theme of this article  is the study of the $G$-Euler characteristic of the fine Selmer group $R(E/F_\infty)$ {(see Definition \ref{defn:euler})}. { Put $\Gamma:=\gal(\Q_\cyc/\Q)\cong \Z_p$.} Recall that the $\Gamma$-Euler characteristic of the Selmer group $S(E/\Q_\cyc)$ encodes important arithmetic properties of the elliptic curve. Let $E/\mathbb{\QQ}$ be an elliptic curve  with good, ordinary reduction at $p$ and assume that $L_E(s)$, the Hasse-Weil $L$-function of $E/\QQ$ satisfies $L_E(1) \neq 0$. Then, under suitable hypotheses, \begin{small}$	\chi(\Gamma,S(E/\QQ_\cyc)):=\dfrac{\# H^0(\Gamma, S(E/\Q_\cyc))}{\# H^1(\Gamma, S(E/\Q_\cyc))}$\end{small} is related to \begin{small}$ {L_E(1)}/{\Omega_E}$\end{small} (cf. 
\cite[Theorem 4.1]{gr3}). Over an admissible $p$-adic Lie extension $F_\infty$ of  $F$, the existence of the $G$-Euler characteristic of the dual Selmer group $S(E/F_\infty)$ and its relation with the special values of the $L$-function of $E$ has been established in various cases  due to Coates-Howson, \cite{css,hv}  etc. On the other hand, Wuthrich \cite{wu} under certain conditions, has proven the existence of $\chi\big(\Gamma,R(E/F_\cyc)\big)$ and has given a formula to compute it. In this paper, we discuss the $G$-Euler characteristic of $R(E/F_\infty)$, where $F_\infty$ is { a} certain admissible $p$-adic Lie extension of a number field or a function field $F$. We would like to mention that it does not seem to be easy to prove the existence of the Euler characteristic even over a specific non-commutative $p$-adic Lie extension of a number field, like the false-Tate curve extension (see Remark \ref{limfalse}).   \\


\noindent {\bf Main results of the article: part (c):} {\it Our main result over number fields is Theorem \ref{mainnumber}. In this theorem, under suitable hypotheses and  assuming Jannsen's conjecture (Conjecture \ref{ja}),  we prove the existence of  the $G$-Euler characteristic of $R(E/F_\infty)$  over the false-Tate curve extension $F_\infty$ of $F$  (see \S \ref{section2}). In fact, in \S \ref{section2}, we  also prove the existence of $\chi\big(G,R(E/F_\infty)\big)$ without assuming Conjecture \ref{ja}, as long as $\Z_p[[H]]$ corank of $R(E/F_\infty)$ is at most 1 (see Propositions  \ref{lemma2.8} and  \ref{thm:euler}). We stress that  Theorem \ref{mainnumber} and Proposition \ref{lemma2.8} are valid irrespective of whether $E$ has ordinary or supersingular  reduction  above $p$. We also prove the existence of $\chi\big(G, R(E/F_\infty)\big)$ over the false-Tate curve extension $F_\infty$ of a function field $F$ of characteristic $\ell \neq p$; in this setting the analogue of Conjecture \ref{ja} has been already proved by Jannsen. The existence of the $G$-Euler characteristic of $p^\infty$-fine Selmer group over $\Z_p^d$ extension a function field of characteristic $p$, under appropriate { conditions}, is established in Proposition \ref{thm:control}.}


In the number field case,  for proving Theorem \ref{mainnumber}, we assume Jannsen's Conjecture and make use of a result of Kato \cite[Theorem 5.1]{ka} along with other Iwasawa theoretic techniques and the theorem holds whether $E$ has  ordinary or supersingular reduction at the primes above $p$. For the other results in \S \ref{section2}, we use the structure of modules over non-commutative Iwasawa algebras and the properties of elliptic curve. 
In \S \ref{section4}, we notice that the image of the kummer map is zero, thus $p^\infty$-fine Selmer group over  function fields of characteristic $ \ell \neq p$ coincides with the $p^\infty$-Selmer group. Over function fields of characteristic $p$, we give emphasis to two special $\Z_p$ extensions; the arithmetic or the unramified $\Z_p$ extension  and { the geometric $\Z_p$ extension constructed  from Carlitz module}, a particular type of Drinfeld module (see \S \ref{subs3.1}).  
In addition, we also provide  modest evidence towards Conjecture \ref{conjb}  over a general $\Z_p^d$ extension in Corollary \ref{theo4.8}. In \S \ref{section5}, our main tool  is to compare the fine Selmer group of $E[p]$ with the corresponding fine Selmer groups of the group schemes $\mu_p$ and $\Z/p\Z$ and then make use of the classical results on the divisor class group over  function field. In the characteristic $p$ setting, we discuss the dependence of the fine Selmer group on the set $S$ (Remark \ref{remark2.2}) and also comment on the zero Selmer group  (Remark \ref{remark0selmer}). 

The structure of the article is the following: \S \ref{section2} contains the results over  number fields. In particular, we discuss various cases in which, over the false-Tate curve extension $F_\infty/F$,   $\chi(G, R(E/F_\infty))$  exists and  Conjecture \ref{conjb} holds. In \S \ref{section4}, we consider results over  function fields of characteristic $\ell\neq p$ and we notice that in this setting, an  analogue of Conjecture \ref{conja} is true but an analogue of Conjecture \ref{conjb} is false. The results for $p^\infty$-fine Selmer group over function field{s} of characteristic $p$ are contained in \S \ref{section5} and  there we prove that an analogue of Conjecture \ref{conja}  holds and moreover show that an analogue of Conjecture \ref{conjb} holds over a certain class of $\Z_p^d$ extensions.

\section{Results over number fields }\label{section2}
Throughout the section \ref{section2}, $E$ will be an elliptic curve defined over a number field $F$ with {\it good reduction at all the primes of $F$ dividing $p$}. (Note: Our Theorem \ref{mainnumber} and Proposition \ref{lemma2.8} { covers both the case when E has ordinary reduction and the case when E has supersingular reduction} at primes above $p$.
However, from Proposition \ref{thm:euler} onwards, we assume that $E$ has ordinary reduction at all  primes above $p$.)  Also, throughout \S \ref{section2}, $S$ will denote a finite set of primes of  $F$ containing the primes dividing $p$, the infinite primes of $F$ and the primes where $E$ has bad reduction. 
Let $F_S$ be the maximal algebraic extension of $F$ unramified outside $S$. For a field $L$ with $F\subset L \subset F_S$, write $G_S(L)=\mathrm{Gal}(F_S/L)$. For an abelian group $M$, $M[p^n]$ and  $M(p)$ will respectively denote its $p^n$-torsion and $p$-primary torsion subgroup of $M$.  For a compact or discrete $\Z_p$ module $M$, let $M^{\vee}:=\Hom_{\text{cont}}(M,\Q_p/\Z_p)$ denote its Pontryagin dual. Let $E_{p^n}$  (respectively  $E_{p^\infty}$)  denote  the Galois module $E(\overline{\Q})[p^n]$  (respectively $E(\overline{\Q})(p)$). Let $T_p(E)=\underset{n}{\varprojlim}\ {E_{p^n}}$  be the Tate module associated to $E$.   Let $F_\cyc$ be the cyclotomic $\Z_p$ extension of  a number field $F$ and set $\Gamma:=\mathrm{Gal}(F_\cyc/F)$. For any  profinite group $\mathcal{G}$, the Iwasawa algebra $\Z_p[[\mathcal G]]$ is defined by \begin{small}$\Lambda({ \mathcal G}):= \Z_p[[\mathcal G]]:=\underset{U}\plim\ \Z_p\ [\mathcal G/U]$\end{small}, where $U$ varies over open normal subgroups of $\mathcal G$ and the inverse limit is taken with respect to the natural projection maps.
\begin{definition}\label{admis}  
A Galois extension $F_\infty/ F$ is called an admissible $p$-adic Lie extension  if  $F_\cyc\subset  F_\infty$, at most finitely many primes of $F$ ramify in $F_\infty$ and   $G:=\gal(F_\infty/F)$ is a compact $p$-adic Lie group  without an element of order $p$.  
\end{definition}

 The study of {  torsion modules and  }pseudonull modules play an important role in commutative Iwasawa theory. { Let us consider the one variable Iwasawa algebra  $\Lambda=\Z_p[[\Gamma]]\cong \Z_p[[T]]$. Given a finitely generated torsion $\Lambda$ module $M$, there is a structure theorem and one can associate a characteristic ideal $( p^{\mu(M)}f_M(T))$, where $\mu(M)\in \Z_{\geq 0}$ and $f_M(T)$ is an irreducible monic Weierstrass polynomial in $\Z_p[T]$. Let $E/\Q$ be an elliptic curve with ordinary reduction at a prime $p$. Then the  main conjecture of Iwasawa theory asserts that  $S(E/\Q_\cyc)^\vee$  is a torsion $\Lambda$ module with the characteristic ideal generated by the $p$-adic $L$-function of $E/\Q$ (see \cite[Conjecture 1.13, Page 59]{gr3}).

The Iwasawa algebra $\Lambda$ is not a PID and the structure theorem for a finitely generated torsion module is valid only up to a pseudo-isomorphism i.e. a $\Lambda$ module homomorphism with pseudonull kernel and cokernel.  Thus pseudonull module naturally appears in Iwasawa theory, although they have trivial characteristic ideal. In arithmetic situation, for an elliptic curve $E/\Q$ as above, Greenberg \cite[Proposition 10]{gr} has shown that, under suitable hypotheses,  $S(E/\Q_\cyc)^\vee$ has no non-zero $\Lambda$ pseudonull submodule. For a non-commutative $p$-adic Lie extension $F_\infty$ of a number field $F$, whether  $S(E/F_\infty)^\vee$ has non-zero  $\Z_p[[\mathrm{Gal}(F_\infty/F)]]$ pseudo-null  submodule, is studied extensively (cf. \cite[Theorem 2.6]{hv}). As mentioned earlier, there is  a  conjecture of Greenberg \cite[Conjecture 3.5]{gr4}) on the pseudonullity of certain Iwasawa module in classical Iwasawa theory which motivated    \cite[Conjecture \ref{conjb}]{cs}. We refer to \cite[Chapter 5, \S1]{nsw} for more details on the definition and motivation of pseudonull modules in commutative Iwasawa theory.}

 For a compact $p$-adic Lie group $G$ without an  element of order $p$, the notion of torsion modules and pseudonull modules over Iwasawa algebra $\Z_p[[G]]$ was generalised by Venjakob. If $M$ is a finitely generated $\Z_p[[G]]$ module, then $M$ is said to be a pseudonull (respectively torsion) $\Z_p[[G]]$ module if
 $\Ext^i_{\Z_p[[G]]} (M,\Z_p[[G]]) = 0$ for $i = 0, 1$ (respectively $i=0$). 
We have the following criterion for pseudonullity due to Venjakob {(see \cite[Lemma 3.1]{hs}, \cite[Proposition 5.4, Example 2.3]{ve2})}: 
 \begin{theorem}[Venjakob]\label{theo:pseudo}
 	Let $\f$ be an admissible $p$-adic Lie extension with Galois group $G$ and set $H:=\mathrm{Gal}(F_\infty/F_\cyc)$. Let $M$ be a $\Z_p[[G]]$ module, which is a finitely generated $\Z_p[[H]]$ module. Then $M$ is a pseudonull $\Z_p[[G]]$ module if and only if it is $\Z_p[[H]]$ torsion. 
 \end{theorem}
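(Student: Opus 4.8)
The plan is to use the structure theory of modules over the Iwasawa algebra $\Z_p[[G]]$, which by hypothesis is an Auslander regular (hence Auslander--Gorenstein) Noetherian ring of finite global dimension, together with the factorisation $\Z_p[[G]] = \Z_p[[H]][[\Gamma]]$ arising from the exact sequence $1 \to H \to G \to \Gamma \to 1$ (since $F_\cyc \subset F_\infty$ and $\Gamma = \gal(F_\cyc/F) \cong \Z_p$). First I would recall that for a finitely generated module $M$ over an Auslander regular ring $R$ the grade $j_R(M) := \inf\{\, i : \Ext^i_R(M,R) \neq 0 \,\}$ controls the codimension of $M$, so that $M$ is torsion iff $j_R(M) \geq 1$ and pseudonull iff $j_R(M) \geq 2$. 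Thus the statement to prove is: for $M$ finitely generated over $\Z_p[[H]]$, one has $j_{\Z_p[[G]]}(M) \geq 2$ if and only if $M$ is $\Z_p[[H]]$-torsion, i.e. $j_{\Z_p[[H]]}(M) \geq 1$.

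The key step is a grade-shifting comparison between $R = \Z_p[[H]]$ and $S = \Z_p[[G]]$. Since $S$ is a (skew, completed) power series ring $R[[\Gamma]]$ in one variable over $R$, it is flat over $R$ (free, in fact, on pro-$p$ topological grounds) and of relative dimension $1$; concretely, one has $\dim S = \dim R + 1$ and an analogue of the Rees/Grothendieck spectral sequence or, more simply, a change-of-rings computation. For an $R$-module $M$ regarded as an $S$-module (via the augmentation $S \to R$, so $\Gamma$ acts trivially — this is the relevant case here because $M$ is already an $H$-module on which we let $\Gamma$ act through the quotient), the standard base-change identity gives
\[
\Ext^i_S(M, S) \;\cong\; \Ext^{i-1}_R(M, R)
\]
up to a shift by the cohomological dimension of $\Gamma$, which is $1$. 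More precisely, resolving $R$ as an $S$-module by the length-one Koszul-type complex $0 \to S \xrightarrow{\gamma - 1} S \to R \to 0$ for a topological generator $\gamma$ of $\Gamma$, and combining with $\Hom_S(S, S) = S$, one obtains a long exact sequence, or directly the isomorphism $j_S(M) = j_R(M) + 1$. Hence $j_S(M) \geq 2 \iff j_R(M) \geq 1$, which is exactly the claim: pseudonull over $\Z_p[[G]]$ $\iff$ torsion over $\Z_p[[H]]$.

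The main obstacle I anticipate is making the grade-shift rigorous in the noncommutative, non-domain setting: one must verify that $\Z_p[[G]]$ is genuinely Auslander regular (a theorem of Venjakob for $p$-adic Lie groups without $p$-torsion, which we may invoke), that the spectral sequence $\Ext^p_R(\Ext^q_S(M,S)\text{-type terms})$ or the direct Koszul argument is valid for completed group algebras (using that $S$ is a crossed product $R \ast \Gamma$ and that $\Gamma$ has cohomological dimension exactly $1$ over $\Z_p$), and that $M$ finitely generated over $R$ forces $\Ext^\bullet_S(M,S)$ to be computable via the length-one resolution of $R$ without convergence issues — all of which hold because $M$ is Noetherian over $R$ and $S$ is Noetherian. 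I would also need the elementary observation that a finitely generated $\Z_p[[H]]$-module $M$ is automatically finitely generated (indeed torsion) over $\Z_p[[G]]$, so that the grade $j_S(M)$ is defined and the characterisation of torsion/pseudonull modules via vanishing of $\Ext^i_S(M,S)$ for $i = 0$ and $i = 0,1$ respectively applies. Once the identity $j_S(M) = j_R(M)+1$ is in hand, the theorem follows immediately by unwinding the definitions.
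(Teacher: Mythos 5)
First, note that the paper does not prove this statement at all: it is quoted as a theorem of Venjakob, so the only meaningful comparison is with Venjakob's argument, whose overall shape you have correctly identified. Your strategy — use Auslander regularity of $\Z_p[[G]]$, characterise torsion and pseudonull via the grade $j(M)=\inf\{i:\Ext^i(M,\cdot)\neq 0\}$, and prove the shift $j_{\Z_p[[G]]}(M)=j_{\Z_p[[H]]}(M)+1$ by exploiting the skew power series structure $\Z_p[[G]]\cong \Z_p[[H]][[\gamma-1;\sigma]]$ — is exactly the right one, and the final unwinding (pseudonull $\Leftrightarrow$ grade $\geq 2$ $\Leftrightarrow$ grade $\geq 1$ over $\Z_p[[H]]$ $\Leftrightarrow$ $\Z_p[[H]]$-torsion) is fine, as is the observation that such an $M$ is automatically $\Z_p[[G]]$-torsion.

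However, the justification you give for the key shift isomorphism has a genuine gap. You compute $\Ext^i_{\Z_p[[G]]}(M,\Z_p[[G]])$ by assuming that $M$ is ``an $R$-module regarded as an $S$-module via the augmentation $S\to R$, so $\Gamma$ acts trivially'', and then resolving $R=\Z_p[[H]]$ over $S=\Z_p[[G]]$ by $0\to S\xrightarrow{\gamma-1}S\to R\to 0$. But no ring homomorphism $\Z_p[[G]]\to\Z_p[[H]]$ exists in general ($H$ is a subgroup, not a quotient, of $G$; the augmentation-type quotient goes to $\Z_p[[\Gamma]]$), and, more importantly, the hypothesis of the theorem is that $M$ carries an arbitrary $\Z_p[[G]]$-structure which merely happens to be finitely generated over $\Z_p[[H]]$: for the modules the paper applies this to, such as $R(E/\f)^\vee$ and $S(E/\f)^\vee$, the action of a lift $\gamma$ of a generator of $\Gamma$ is genuinely nontrivial and does not factor through any restriction of scalars. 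So your base-change identity $\Ext^i_S(M,S)\cong\Ext^{i-1}_R(M,R)$ is only argued in a special case that excludes the intended applications. The correct mechanism (Venjakob's) is to resolve $M$ itself, not $R$: use the exact sequence of $\Z_p[[G]]$-modules $0\to \Z_p[[G]]\otimes_{\Z_p[[H]]}M\xrightarrow{\;\gamma\otimes\gamma^{-1}(-)-1\;}\Z_p[[G]]\otimes_{\Z_p[[H]]}M\to M\to 0$, together with the isomorphism $\Ext^i_{\Z_p[[G]]}\big(\Z_p[[G]]\otimes_{\Z_p[[H]]}M,\Z_p[[G]]\big)\cong \Ext^i_{\Z_p[[H]]}(M,\Z_p[[H]])\,\widehat{\otimes}_{\Z_p[[H]]}\Z_p[[G]]$ (valid because $\Z_p[[G]]$ is free over $\Z_p[[H]]$ and $M$ is finitely generated over the Noetherian ring $\Z_p[[H]]$), and then an analysis of the induced endomorphism showing it is injective with cokernel $\Ext^i_{\Z_p[[H]]}(M,\Z_p[[H]])$ (up to a twist by the $\gamma$-action); this yields $\Ext^{i+1}_{\Z_p[[G]]}(M,\Z_p[[G]])\cong\Ext^i_{\Z_p[[H]]}(M,\Z_p[[H]])$ for all $i\geq 0$ with no hypothesis on the action. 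Without replacing your trivial-action reduction by this twisted argument (or by a direct citation of Venjakob's result, as the paper does), the proof does not cover the theorem as stated.
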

 We also recall the following result:
 \begin{theorem}\cite[Theorem 1.1]{ho}\label{theo:Ho}
 	Let $G$ be a compact,  pro-$p$, $p$-adic Lie group without any element of order $p$ and $M$  be a cofinitely generated discrete $\Z_p[[G]]$ module. Then $\cora_{\Z_p[[G]]}(M)=\sum_{i\geq 0} (-1)^i \cora_{\Z_p} (H^i (G,M))$. \qed
 \end{theorem}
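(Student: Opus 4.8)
We sketch a proof. Since $G$ is pro-$p$ and has no element of order $p$, it is torsion-free; put $\Lambda := \Z_p[[G]]$ and $d := \dim G = \mathrm{cd}_p(G)$. If $d=0$ then $G$ is trivial and the assertion is vacuous, so assume $d \ge 1$. Then $\Lambda$ is a Noetherian local domain (local since $\Lambda/\mathrm{Jac}(\Lambda) \cong \FF_p$), hence an Ore domain with skew field of fractions $Q(\Lambda)$, and for a finitely generated $\Lambda$-module $N$ we write $\rk_\Lambda(N) := \dim_{Q(\Lambda)}(Q(\Lambda)\otimes_\Lambda N)$. First I would rephrase the statement through Pontryagin duality: for a cofinitely generated discrete $\Lambda$-module $M$ one has $\cora_\Lambda(M) = \rk_\Lambda(M^\vee)$ and $\cora_{\Z_p}(H^i(G,M)) = \rk_{\Z_p}(H^i(G,M)^\vee)$, while $H^i(G,M)^\vee \cong H_i(G, M^\vee)$ by duality for profinite cohomology. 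So, setting $N := M^\vee$ (a finitely generated compact $\Lambda$-module) and recalling $H_i(G,N) = \tor_i^{\Lambda}(\Z_p, N)$, it suffices to prove
$$ \rk_\Lambda(N) \;=\; \sum_{i \ge 0}(-1)^i\,\rk_{\Z_p}\big(\tor_i^{\Lambda}(\Z_p, N)\big) $$
for every finitely generated $\Lambda$-module $N$.

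By a theorem of Brumer $\Lambda$ has finite global dimension $d+1$; as $\Lambda$ is Noetherian and local, $N$ therefore admits a finite resolution $0 \to Q_m \to \cdots \to Q_1 \to Q_0 \to N \to 0$ by finitely generated free $\Lambda$-modules $Q_j \cong \Lambda^{s_j}$. The crucial move is to compute $\tor_*^{\Lambda}(\Z_p, N)$ from this resolution of $N$ rather than from a resolution of $\Z_p$: since $\Z_p \otimes_\Lambda \Lambda^{s_j} \cong \Z_p^{s_j}$, the groups $\tor_i^{\Lambda}(\Z_p, N)$ are the homology groups of the finite complex $0 \to \Z_p^{s_m} \to \cdots \to \Z_p^{s_0} \to 0$ of finitely generated free $\Z_p$-modules. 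Two consequences follow. First, each $\tor_i^{\Lambda}(\Z_p, N)$ is finitely generated over $\Z_p$ (a subquotient of a finitely generated $\Z_p$-module), so the right-hand side above is a well-defined integer; and since $-\otimes_{\Z_p}\Q_p$ is exact, the Euler characteristic of this complex equals that of its homology, giving $\sum_{i\ge 0}(-1)^i \rk_{\Z_p}(\tor_i^{\Lambda}(\Z_p,N)) = \sum_{j\ge 0}(-1)^j s_j$. Second, because $Q(\Lambda)$ is flat over $\Lambda$, applying $Q(\Lambda)\otimes_\Lambda-$ to the resolution yields an exact sequence $0 \to Q(\Lambda)^{s_m} \to \cdots \to Q(\Lambda)^{s_0} \to Q(\Lambda)\otimes_\Lambda N \to 0$, and additivity of dimension over the skew field $Q(\Lambda)$ gives $\rk_\Lambda(N) = \sum_{j\ge 0}(-1)^j s_j$. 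Comparing the two computations yields the desired identity.

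Thus the proof is essentially formal once the two structural inputs are in place: the finiteness of the global dimension of the noncommutative Iwasawa algebra $\Lambda(G)$ (which produces the finite free resolution) and the existence and flatness of the skew field of fractions $Q(\Lambda)$. The one genuinely substantive observation is that both $\cora_\Lambda(N)$ and the alternating sum $\sum_i(-1)^i\cora_{\Z_p}(H^i(G,M))$ are computed by the single integer $\sum_j(-1)^j s_j$ attached to a finite free resolution of $N = M^\vee$; in particular the finiteness of the individual coranks $\cora_{\Z_p}(H^i(G,M))$, which at first sight looks like the main obstacle, is automatic once one resolves $N$ rather than $\Z_p$.
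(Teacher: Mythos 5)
Your argument is correct and is, in substance, the standard proof behind Howson's theorem: the paper does not prove this statement but simply quotes it from \cite{ho}, and the argument there likewise reduces, after Pontryagin dualizing to $N=M^\vee$, to comparing the two Euler characteristics read off from a finite resolution of $N$ by finitely generated free $\Z_p[[G]]$-modules, which exists by Brumer's bound on the global dimension. The only point worth flagging is that your use of the skew field of fractions silently invokes the nontrivial fact (a theorem of Neumann) that $\Z_p[[G]]$ is an integral domain for a torsion-free compact $p$-adic analytic group $G$ — a standard but citable input — together with the compatibility of the corank convention used here with $\dim_{Q(\Lambda)}\big(Q(\Lambda)\otimes_{\Z_p[[G]]}M^\vee\big)$; also, for trivial $G$ the identity is not vacuous but trivially true, and your argument covers that case anyway.
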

Let $F,S$ be as above and  $L$ be a finite extension of $F$ with $L\subset F_S$. For each $v \in S$, set \begin{small}
	$J_v(E/L):=\underset{w\mid v}\prod H^1(L_w,E(\overline{L}_w))(p)$
\end{small} and \begin{small} $K^i_v(E/L):= \underset{w\mid v}\prod H^i(L_w,E_{p^\infty})$\end{small}, for $i=0,1$. Here $\overline{L}_w$ is the completion of $L$ at $w$. 
The $p^\infty$-Selmer and $p^\infty$-fine Selmer group of $E$ over $L$, denoted respectively as $S(E/L)$ and $R(E/L)$ are defined as:
\begin{definition}\label{defn1.1}
\begin{small}
\begin{equation}
S(E/L):=\Ker(H^1(G_S(L),E_{p^{\infty}})\longrightarrow \underset{v\in S}\oplus J_v(E/L)) 		  
	\end{equation}
	\begin{equation}
	R(E/L):=\Ker (H^1(G_S(L),E_{p^{\infty}})\lra \underset{v\in S}\oplus K^1_v(E/L)),
\end{equation}
\end{small}
\end{definition}
In fact, $S(E/L)$ and $R(E/L)$ are independent of $S$ as long as $S$ contains all  primes of bad reduction of $E/K$. We have the following relation between $R(E/L)$ and $S(E/L)$ \cite{cs}
\begin{small}
 \begin{equation}\label{equation3}
 	0\lra R(E/L)\lra S(E/L)\lra \underset{w \mid p}{\oplus} (E(L_w)\otimes \frac{\Q_p}{\Z_p})
 \end{equation}
\end{small}
 The definitions of $S(E/\mathcal L)$ and $R(E/\mathcal L)$, over an infinite extension $\mathcal L$ of $F$ contained in $F_S$, extends as usual,  by taking inductive limit over all finite subextensions of $\mathcal L$ containing $F$. Moreover,  using the Poitou-Tate exact sequence for $E_{p^{\infty}}$ over $L$  and { taking} 
 inductive limits over all such finite extensions  $F \subset L\subset \mathcal{L}$,  \cite[Equations (44), (45)]{cs} we get:
\begin{small}
 \begin{equation}\label{eq5}
 0\lra H^0(\mathcal{L},E_{p^{\infty}})\lra\underset{v\in S}\oplus K^{0}_v(E/\mathcal{L})\lra H^2_{\mathrm{Iw}}(T_pE/ \mathcal{L})^{\vee}\lra R(E/\mathcal{L})\lra 0,
 \end{equation}
\end{small}
\begin{small}
	\begin{equation}\label{eq6}
		\begin{array}{lcl}
			0\ra R(E/\mathcal{L})\ra H^1(G_S(\mathcal{L}),E_{p^{\infty}})\ra \underset{v\in S}\oplus K^{1}_v(E/\mathcal{L})  \ra H^1_{\mathrm{Iw}}(T_pE/ \mathcal{L})^{\vee}\ra  H^2(G_S(\mathcal{L}),E_{p^{\infty}})\ra 0.
		\end{array}
	\end{equation}
\end{small}
Here for $i=1,2,\ H^i_{\mathrm{Iw}}(T_pE/\mathcal{L}):=\underset{\small{L}}{\plim} \ H^i(K_S/L,T_p(E))$   is the Iwasawa cohomology of $T_pE$ over $\mathcal L$ and the projective limit is taken with respect to the corestriction maps. Equations \eqref{eq5} and  \eqref{eq6} { explain} the relation between the Iwasawa cohomologies and the fine Selmer group.

  Let $G$ be a compact $p$-adic Lie group without any element of order $p$. Then, by  results of Brumer and Lazard, the global dimension  of $\Z_p[[G]]$ is finite.   
\begin{definition}\label{defn:euler}
	Let $G$ be a compact $p$-adic Lie group without any $p$-torsion element and $M$ be a discrete $G$-module. If $H^i(G,M)$ is finite for all $i\geq 0$, then we say that the $G$- Euler characteristic of $M$ exists and it is defined as
	\begin{small} $$\chi(G,M)=\underset{i\geq 0}\prod(\# H^i(G,M))^{(-1)^i}. $$
		\end{small}
\end{definition}

In the above setting, if $G$ is  commutative or more generally `finite by nilpotent' \cite[Remark 1.5]{js}, then it is known that  $\chi(G,M)$ exists  $\Leftrightarrow H^0(G,M)$ is finite.

 Now we discuss the Euler characteristic of fine Selmer groups of elliptic curves over perhaps the simplest non-commutative admissible $p$-adic Lie extension, the so called  false-Tate curve extension $F_\infty$ of $\QQ(\mu_p)$. We prove the finiteness of the Euler characteristic of the fine Selmer group of an elliptic curve over the false-Tate curve extension in two different situations. For the first result, we explore the relation between the fine Selmer group and Conjecture {\ref{ja}} of Jannsen on the  twist of $\ell$-adic cohomology of a motive. Using this together with a result of Kato \cite{ka}, we prove the finiteness of the Euler characteristic of the fine Selmer group. For the second {set} of results, we do not assume any conjecture but instead make some hypothesis on the corank of the Selmer group of $E$ over $F_\cyc$.

  Let $\mu_{p^n}$ denote the group of $p^n$-th roots of unity in $\bar{\Q}$. 
  { Let $\chi_p:G_\QQ\lra \Z_p^\times$ be the $p$-adic cyclotomic character.} Let $F$ be a number field containing $\mu_p$. Now, let $m$ be  a $p$-power free positive integer. Then $\f:=\underset{n}\bigcup F(\mu_{p^n}, m^{1/p^n})$ is the  false Tate curve extension over $F$. {  Put $G:=\mathrm{Gal} (\f/F)$, $H:=\gal(F_\infty/F_\cyc) \cong \Z_p$ and $\Gamma:=\gal(F_\cyc/F)\cong G/H \cong \Z_p$, where the action of $\Gamma$ on $H$ is given by the $p$-adic cyclotomic character. More precisely, fix  topological generators $\gamma$ of $\Gamma$ and $h$ of $H$, respectively and let $\tilde{\gamma}$ be a lift of $\gamma$ in $G$. Then   the action of $\tilde{\gamma}$ on $H$ is given by $\tilde{\gamma}\cdot h= \tilde{\gamma}h\tilde{\gamma}^{-1}=h^{\chi_p(\gamma)}$. Hence $G\cong H\rtimes \Gamma\cong \Z_p\rtimes \Z_p$.  }

  Given a $G_\QQ$ module $M$,  we denote by $M(k)$ the twist of $M$ by $\chi_p^k$.  
  Define
  \begin{small}
 $$
 	R(E^k/F):=\Ker (H^1(G_S(F),E_{p^{\infty}}(k))\lra \underset{v\in S}\bigoplus\ \underset{w\mid v}\prod H^1(F_w,E_{p^\infty}(k))).
 $$
 \end{small}

Next, we discuss the finiteness of $H^0(G,R(E/\f))=R(E/\f)^G$. 
\begin{proposition}\label{thm:H_0}
	Assume that $R(E/F)$ is finite. Then $R(E/\f)^G$ is finite.
\end{proposition}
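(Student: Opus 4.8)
The plan is to compare $R(E/F)$ with $R(E/\f)^{G}$ via restriction and to show that the discrepancy is governed by Galois cohomology with finite coefficients. Restriction $H^{1}(G_{S}(F),E_{p^{\infty}})\to H^{1}(G_{S}(\f),E_{p^{\infty}})$ sends $R(E/F)$ into $R(E/\f)$, and, being $G$-equivariant, into $R(E/\f)^{G}$; call the resulting map $\lambda\colon R(E/F)\to R(E/\f)^{G}$. Its kernel is a subgroup of the finite group $R(E/F)$, so it suffices to prove that $\coker\lambda$ is finite, for then $R(E/\f)^{G}$ is an extension of a subgroup of $\coker\lambda$ by $\lambda(R(E/F))$.

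To bound $\coker\lambda$ I would use the fundamental diagram whose rows are the exact sequences defining $R(E/F)$ and $R(E/\f)^{G}$ inside $H^{1}(G_{S}(F),E_{p^{\infty}})$, resp.\ $H^{1}(G_{S}(\f),E_{p^{\infty}})^{G}$, with vertical restriction maps $\lambda$, $\alpha$ (on the $H^{1}$'s), $\beta$ (on $\bigoplus_{v\in S}K^{1}_{v}$). Restricting the horizontal maps to their images and applying the snake lemma shows that $\coker\lambda$ is an extension of a subgroup of $\coker\alpha$ by a subquotient of $\Ker\beta$. By inflation--restriction for $1\to G_{S}(\f)\to G_{S}(F)\to G\to1$ one has $\coker\alpha\hookrightarrow H^{2}(G,E(\f)(p))$. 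And $\Ker\beta=\bigoplus_{v\in S}\Ker\!\bigl(K^{1}_{v}(E/F)\to K^{1}_{v}(E/\f)^{G}\bigr)$, in which: the archimedean summands vanish since $p$ is odd; for $v\in S$, $v\nmid p$, the group $K^{1}_{v}(E/F)=H^{1}(F_{v},E_{p^{\infty}})$ is finite (since $E(F_{v})\otimes\Q_{p}/\Z_{p}=0$ it is isomorphic to $H^{1}(F_{v},E)(p)\cong E(F_{v})(p)^{\vee}$), hence so is the $v$-summand; and for $v\mid p$, inflation--restriction over $\f_{w}/F_{v}$ (with $w$ a place of $\f$ above $v$ and $G_{v}\subseteq G$ its decomposition group) identifies the $v$-summand with $H^{1}(G_{v},E(\f_{w})(p))$. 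So the proposition reduces to the finiteness of $H^{2}(G,E(\f)(p))$ and of $H^{1}(G_{v},E(\f_{w})(p))$ for $v\mid p$.

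Since $G$ and its closed subgroups are compact $p$-adic Lie groups with no element of order $p$, hence of finite cohomological dimension, it is enough to prove that $E(\f)(p)$ is finite and that $H^{1}(G_{v},E(\f_{w})(p))$ is finite for $v\mid p$. For the former: were $E(\f)(p)$ infinite, its maximal divisible subgroup would provide a $G$-stable line $\Q_{p}/\Z_{p}(\psi)$ of $E_{p^{\infty}}$ whose character $\psi$ factors through $\gal(\f/F)\cong\Z_{p}\rtimes\Z_{p}$ --- the possibility that the divisible part is all of $E_{p^{\infty}}$ being excluded because $\gal(F(E_{p^{\infty}})/F)$ is not a quotient of $\Z_{p}\rtimes\Z_{p}$. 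Now $\psi$ cannot have finite order, since then $E$ would acquire a rational $\Q_{p}/\Z_{p}$ over the \emph{number field} $F^{\ker\psi}$; and $\psi$ cannot have infinite order either, by the good reduction of $E$ above $p$: locally at $v\mid p$ the only $G_{F_{v}}$-stable line of $E_{p^{\infty}}$ is the connected part $\widehat{E}[p^{\infty}]\cong\mu_{p^{\infty}}\otimes\eta$, with $\eta$ unramified and $\eta(\mathrm{Frob}_{v})$ a unit root of $X^{2}-a_{v}X+N(v)$, and since that root has complex absolute value $N(v)^{1/2}\ne1$ it is not a root of unity, so $\eta$ --- and hence $\psi$, comparing Hodge--Tate weights and ramification at $v$ --- is of infinite order and ramified in a way that $\f$ cannot absorb, contradicting triviality of $\psi$ on $G_{S}(\f)$. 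Finiteness of $H^{1}(G_{v},E(\f_{w})(p))$ for $v\mid p$ follows from the same local analysis: when $\f_{w}/F_{v}$ is totally ramified the unramified twists survive, so $E(\f_{w})(p)$ itself is finite; in the remaining case $\f_{w}$ meets $F_{v}^{\mathrm{ur}}$ in (at most) its $\Z_{p}$-extension, $G_{v}$ is up to finite index the direct product of $\Gamma_{v}$ with an unramified $\Z_{p}$ acting trivially on $\mu_{p^{\infty}}$, and one checks directly that $H^{1}(G_{v},E(\f_{w})(p))$ is finite. I expect this last paragraph --- pinning down the $p^{\infty}$-torsion of $E$ over $\f$ and over its completions above $p$ --- to be the only real difficulty; everything else is formal. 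It is also exactly here that the hypothesis of good reduction above $p$ and the inclusion $F_{\cyc}=F(\mu_{p^{\infty}})\subseteq\f$ enter in an essential way.
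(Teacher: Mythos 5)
Your strategy is the paper's strategy: Proposition \ref{thm:H_0} is proved there as a control theorem, by quoting Hachimori--Venjakob precisely for the finiteness statements you attempt to prove by hand; your snake-lemma reduction, the bound $\coker\alpha\hookrightarrow H^2(G,E_{p^\infty}(\f))$, and the places $v\nmid p$ (where $H^1(F_v,E_{p^\infty})$ is finite) are all fine. The gaps are in the two inputs that carry the weight. In the proof that $E(\f)(p)$ is finite, the assertion that for $v\mid p$ \emph{the only} $G_{F_v}$-stable line of $E_{p^\infty}$ is the connected part is false: in the good ordinary case the local representation can split (e.g.\ CM curves with $p$ split), so the unramified line is also stable, and in the supersingular case there is no unit root at all, so the sentence does not apply (there one should instead say there is no stable line). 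Moreover ``ramified in a way that $\f$ cannot absorb'' is exactly what has to be proved, and it sits in tension with your own later concession that $\f_w$ might contain the unramified $\Z_p$-extension of $F_v$: if it did, and the unramified twist took values in $1+p\Z_p$, the local obstruction you invoke would vanish. What does work is a short global argument: any character $\psi$ of $G\cong\Z_p\rtimes\Z_p$ is (finite order)$\times$(a character through $\Gamma$); since the cyclotomic character has open image on $I_v$ and $1+p\Z_p$ is torsion free, $\psi|_{G_{F_v}}$ unramified forces $\psi$ of finite order (excluded by Mordell--Weil over $F^{\ker\psi}$), while $\psi|_{G_{F_v}}=\chi\cdot\mathrm{unr}(\alpha^{-1})$ forces $\psi\chi^{-1}$ of finite order, contradicting that $\alpha$ is a Weil number of archimedean absolute value $N(v)^{1/2}$, hence not a root of unity. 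You gesture at this (``comparing Hodge--Tate weights'') but do not carry it out.

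The second and more serious gap is the finiteness of $H^1(G_v,E_{p^\infty}(\f_w))$ for $v\mid p$, which is the heart of the matter and exactly the content of the local lemma of Hachimori--Venjakob cited in the paper. In your ``remaining case'' you only say ``one checks directly'', after asserting that $G_v$ is, up to finite index, the \emph{direct} product of $\Gamma_v$ with an unramified $\Z_p$; that structural claim is wrong, since every open subgroup of $\Z_p\rtimes\Z_p$ (the action being through an infinite-order character) is again a nonabelian semidirect product. To close the argument you must either rule this case out --- i.e.\ show that the residue field extension of $\f_w/F_v$ is finite for $v\mid p$, which requires a ramification analysis of the Kummer tower $F_{\cyc,u}(m^{1/p^n})$ at $p$ for a $p$-power free integer $m$ (it is true, but it is an argument, not an observation), and then deduce that $E_{p^\infty}(\f_w)$ is finite in both the ordinary and supersingular cases --- or else genuinely compute $H^1(G_v,-)$ with possibly infinite coefficients. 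As written, the proposal reduces the proposition correctly but leaves its two essential finiteness inputs unestablished.
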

\begin{proof}
	The { main idea} of the proof { can be found} in  \cite[Theorem 4.1]{hv}. { However, the set of hypotheses in \cite{hv} is different from ours. A precise proof can be found in \cite[Theorem 6.5]{debanjanalim}. In fact, in the setting of Theorem 6.5 of \cite{debanjanalim}, we take $d=2$, $\alpha_1=m$, $F_n=F$ and $S$ as defined in \S\ref{section2} earlier. Then it follows from the first assertion of the above mentioned Theorem in \cite{debanjanalim} that the kernel and cokernel of the restriction map $R(E/F)\lra R(E/F_\infty)^G$ are finite.} 
\end{proof}
 	
 \begin{proposition}\label{prop2.2}
Assume that $R(E/F)$ is finite. If $R(E^k/F)$ is finite for every $k \geq 1$, then $\chi\big(G, R(E/F_\infty)\big)$ is finite.
\end{proposition}

\begin{proof}
     As $R(E/F)$ is finite, applying a control theorem ({ Proposition} \ref{thm:H_0}), we get that $R(E/F_\infty)^G$ is finite.  Further, it is well known that the finiteness of $R(E/F)$ implies that $R(E/F_\cyc)^\vee$ is $\Z_p[[\Gamma]]$ torsion and hence $H^2(F_\cyc, E_{p^\infty})=0$. Consequently, $H^2(F_\infty, E_{p^\infty})=0$ as well and by \cite[Lemma 3.1]{cs}, we deduce that $R(E/F_\infty)^\vee$ is a torsion $\Z_p[[G]]$  module. Also note that $G$ has $p$-cohomological dimension $=2$. Thus, to show { the} Euler characteristic is finite, it suffices to show $R(E/F_\infty)^G$ is finite and $H^2(G, R(E/F_\infty))$ is  finite (by Theorem \ref{theo:Ho}).

Next from the equation \eqref{eq5}, it is easy to see that $\chi(G, H^2_{\mathrm{Iw}}(T_pE/ F_\infty)^\vee)$ exists if and only if $\chi(G, R(E/F_\infty))$ exists. From the same equation, we can also deduce that $H^2_{\mathrm{Iw}}(T_pE/ F_\infty)$ is torsion over $\Z_p[[G]]$ (respectively $(H^2_{\mathrm{Iw}}(T_pE/ F_\infty)^\vee)^G$ is finite) if and only if $R(E/F_\infty)^\vee$   is torsion over $\Z_p[[G]]$ (respectively $R(E/F_\infty)^G$ is finite). 
From these discussions, we are reduced to show  $H^2(G, R(E/F_\infty))$ is finite or equivalently $H^2(G, H^2_{\mathrm{Iw}}(T_pE/ F_\infty)^{\vee})$ is finite.
As $G\cong \Z_p\rtimes\Z_p$, using Hochshild-Serre spectral sequence \cite[Page-119]{nsw} $H^2(G, H^2_{\mathrm{Iw}}(T_pE/ F_\infty)^\vee)\cong H^1(\Gamma, H^1(H, H^2_{\mathrm{Iw}}(T_pE/ F_\infty)^\vee)) $. Further, { $H^1(\Gamma, H^1(H, H^2_{\mathrm{Iw}}(T_pE/ F_\infty)^\vee)) $} is finite if and only if $H^0(\Gamma, H^1(H, H^2_{\mathrm{Iw}}(T_pE/ F_\infty)^\vee)) $ is  finite.

Thus we are further reduced to show $H^0(\Gamma, H^1(H, H^2_{\mathrm{Iw}}(T_pE/ F_\infty)^\vee)) $ is finite.  By \cite[Proposition 4.2]{ka}, we have a filtration of $H^1(H, H^2_{\mathrm{Iw}}(T_pE/ F_\infty)^\vee)$ given by  $0=S_0 \subset S_1\subset \cdots \subset S_K =H^1(H,H^2_{\mathrm{Iw}}(T_pE/ F_\infty)^\vee)$ such that $S_{i}/S_{i-1} \cong T(\chi^{s})$, where $T$ is a subquotient of $H^0(H, H^2_{\mathrm{Iw}}(T_pE/ F_\infty)^\vee) $ and $s \in \N$. 
Now by an argument similar to \cite[Page 562, last paragraph]{ka} which uses Nekovar's spectral sequence \cite[Proposition 8.4.8.3]{ne}, we get that $H^0(H,H^2_{\mathrm{Iw}}(T_pE/ F_\infty)^\vee)  \cong H^2_{\mathrm{Iw}}(T_pE/ F_\cyc)^\vee$. 

Then from the proof of \cite[Theorem 5.1]{ka}, it  suffices to show that for every $\Z_p[[\Gamma]]$ subquotient $T$ of $H^0(H, H^2_{\mathrm{Iw}}(T_pE/ F_\infty)^\vee) $ and for every $k \in \N$, $H^0(\Gamma, T(k)^\vee)$ is finite. Notice that, by our assumption, $H^2_{\mathrm{Iw}}(T_pE/ F_\cyc)$ is $\Z_p[[\Gamma]]$ torsion and hence  $T(k)$ is  $\Z_p[[\Gamma]]$ torsion. We fix  a topological generator $\gamma$ of $\Gamma$. 
Now from the structure theorem of finitely generated torsion modules over $\Z_p[[\Gamma]]$,  $H^0(\Gamma, T(k)^\vee)$ is finite if and only if $\gamma-1$ does not divide the characteristic ideal of $T(k) $. Note that  $T(k) $ is a subquotient of $H^2_{\mathrm{Iw}}(T_pE/ F_\cyc)(k)$, whence it is enough to prove that $\gamma-1$ does not divide characteristic ideal of $H^2_{\mathrm{Iw}}(T_pE/ F_\cyc)(k)$ (also see the proof of \cite[Theorem 5.1]{ka}) i.e.,\\$H^0(\Gamma, (H^2_{\mathrm{Iw}}(T_pE/ F_\cyc)(k))^\vee)$ is finite, for every $k\in \N$.

The finiteness of $H^0(\Gamma, (H^2_{\mathrm{Iw}}(T_pE/ F_\cyc)(k))^\vee)$ is  equivalent to the finiteness of $H^0(\Gamma, R(E/F_\cyc)(k))$, again by equation \eqref{eq5}.  As $\chi_p$ is trivial on $G_{F_\cyc}$, we have $H^0(\Gamma, R(E/F_\cyc))(k))\cong H^0(\Gamma, R(E^k/F_\cyc))$. 

{ Under the assumption $R(E^k/F)$ is finite for every $k\geq 1$, it reduces to show that the kernel and the cokernel of the restriction map $R(E^k/F)\to R(E^k/F_\cyc)^\Gamma$ are finite for every $k \geq 1$. For the remaining part of the proof, we establish this. For each  prime $v$ in the finite set $S$, choose a prime $w_c\mid v$ in $K_\cyc$ and denote by $\Gamma_v$ the decomposition subgroup of $\Gamma$ corresponding the primes $w_c\mid v$. By a standard  diagram chasing argument (see for example, \cite[Proposition 4.1]{wu}), it suffices to show that  $H^1(\Gamma, (E_{p^\infty}(k))^{G_{F_\cyc}})$ and $H^1(\Gamma_v, (E_{p^\infty}(k))^{G_{F_{\cyc,w_c}}})$, for each $v\in S$, are finite. Further,   $\Gamma$ being topologically cyclic, it is enough to show that $\big(E_{p^\infty}(k))^{G_{F}}$ and $ \big(E_{p^\infty}(k)\big)^{G_{F_{v}}}$, for each $v\in S$, are finite.

Consider a prime $v\mid p$ of $F$.  Since $E$ has  good reduction at $v$, we get  by Imai's theorem \cite[Theorem, page-1]{imai}  that $E_{p^\infty}(F_{\cyc,w_c})$ is finite. Observe that $\big(E_{p^\infty}(k)\big)^{G_{F_{\cyc,w_c}}}\cong \big(E_{p^\infty}(F_{\cyc,w_c})\big)(k)$. Hence, $ \big(E_{p^\infty}(k)\big)^{G_{F_{v}}}$ and also $ (E_{p^\infty}(k))^{G_{F}}$ are finite.

Next, we consider a prime $v\nmid p$ of $F$.  By moving to a finite extension of $F$, if necessary,  we can assume without any loss of generality that $E$ has either good or split multiplicative reduction at $v$.

Consider the case where $E$ has split multiplicative reduction at $v \nmid p$. Recall that $\mu_p \subset F$. It follows from the theory of Tate curves (see for example, \cite[Proposition 5.1(ii)]{hm}) that $E_{p^\infty}(F_{\cyc,w_c}) \cong \mu_{p^\infty}\oplus A $, where $A$ is a finite group. Twisting by $\chi_p^k$ with $k \geq 1$, we get that $E_{p^\infty}(F_{\cyc,w_c})(k) \cong \mu_{p^\infty}(k)\oplus A(k) $. Observe that the topologically cyclic group $\Gamma_v$ is  acting non-trivially on $ \mu_{p^\infty}(k) \cong \frac{\mathbb Q_p}{\mathbb Z_p}(k+1)$, which  is a cofree $\Z_p$ module of co-rank $1$. It follows that  $ \big(E_{p^\infty}(k)\big)^{G_{F_{v}}}$ is finite in this case. 

Now assume $E$ has good reduction at a prime $v \nmid p$ in $S$. Recall that  $V_pE:= (\underset{n}{\varprojlim}E_{p^n})\otimes_{\Z_p}\Q_p$. Note that $(E_{p^\infty}(k))^{G_{F_{v}}}$ being finite is equivalent to $\big(V_pE(k)\big)^{G_{F_{v}}}=0$. We have  $V_pE(k)$ is unramified at $v$ and by the Weil conjectures,  the complex absolute value of each of the eigenvalue of $\mathrm{Frob}_v$ acting on $V_pE(k)$ is $q_v^{k+1/2}$, where $q_v=\# O_F/v$. Hence  we deduce that $\big(V_pE(k)\big)^{G_{F_v}}=0$. This completes the proof.} 
\end{proof}
 Next{,} we discuss Jannsen's Conjecture \cite[Question 2, Page-349]{j}. Jannsen formulates this as  \enquote{Question 2} in his article but as we show in Remark \ref{remark1.9}, in our setting of Tate twists of an elliptic curve, \cite[Question 2, Page-349]{j} is equivalent to the Conjecture 1 stated by Jannsen \cite[page 317]{j}. Also note that Bellaiche \cite[Proposition 5.1]{be} states Question 2 of Jannsen as a conjecture. 
 
 Recall that certain finite set $S$ of primes of $F$ has been chosen at the beginning of this section. 
 For a Galois module $V$, set $V^\ast:=\Hom(V,\Q_p)$.
 \begin{conj}\label{ja} \cite[Question 2]{j}	Let $V$ be a $p$-adic representation coming from geometry of $G_{S}(F)$ which is pure of weight $w \neq - 1$ let $W =V^\ast(1).$ Then the natural map $H^1(G_{S}(F), W) \lra \underset{v\in S}\prod H^1(G_v, W)$ is injective.
\end{conj}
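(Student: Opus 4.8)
This is a deep conjecture, open over number fields, so rather than a proof I outline the natural line of attack: reduce it to a cohomological vanishing statement, settle that statement in the geometric case by a weight argument, and thereby locate the genuine difficulty in the number field case. The first move is global duality. Local Tate duality pairs $H^1(G_v,W)$ perfectly with $H^1(G_v,W^\ast(1))=H^1(G_v,V)$, and the Poitou--Tate nine-term exact sequence --- obtained in the usual way from a $G_S(F)$-stable $\ZZ_p$-lattice in $W$, its reductions mod $p^n$, and passage to the limit --- identifies the kernel of $H^1(G_S(F),W)\to\prod_{v\in S}H^1(G_v,W)$ with the Pontryagin dual of
\[
\Sh^2(G_S(F),V):=\ker\Big(H^2(G_S(F),V)\lra\prod_{v\in S}H^2(G_v,V)\Big).
\]
So Conjecture \ref{ja} is equivalent to the assertion that $\Sh^2(G_S(F),V)=0$ for every $V$ coming from geometry which is pure of weight $w\neq-1$; this is precisely the localised form of Jannsen's Conjecture~1 (see Remark \ref{remark1.9}), and it is this vanishing that I would try to prove.

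When $F$ is a function field of characteristic $\ell\neq p$ with constant field $\FF_q$ one can actually carry this out, and this is in essence Jannsen's own argument. Dually the goal is $\Sh^1(G_S(F),W)=0$, where $W$ is geometric and pure of weight $-w-2\neq-1$. Here $G_S(F)$ is an extension of the procyclic group generated by the geometric Frobenius by the geometric Galois group; a Hochschild--Serre analysis shows that a class in $\Sh^1(G_S(F),W)$ gives rise to a Frobenius-invariant class in the geometric $H^1$ of $W$, the remaining edge terms of the spectral sequence being detected by the local conditions and hence not lying in $\Sh^1$. By the Weil conjectures (Deligne) the Frobenius eigenvalues on this geometric cohomology are governed by the weight of $W$, and since $W$ avoids the central weight $-1$ no eigenvalue equal to $1$ can occur; the invariants, and hence $\Sh^1(G_S(F),W)$, vanish. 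This is the input invoked over function fields in \S\ref{section2}.

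Over a number field there is no Frobenius and the conjecture is open; here I could only record the cases that are known. For $V=\QQ_p(n)$ with $n\geq2$ the vanishing $\Sh^2(G_S(F),V)=0$ follows from Soul\'e's theorem --- ultimately from Borel's computation of the ranks of the higher $K$-groups of the ring of $S$-integers together with the injectivity of the Beilinson regulator --- and more generally it is a consequence of the Bloch--Kato conjecture on special values of $L$-functions and of the expected finiteness of motivic cohomology in the relevant degree. In the situation of this paper $V$ is a non-trivial Tate twist of $V_p(E)$, hence pure of weight $\neq-1$, so Conjecture \ref{ja} is the expected statement but is not known --- which is exactly why Theorem \ref{mainnumber} must assume it. The main obstacle is therefore the number field case: establishing $\Sh^2(G_S(F),V)=0$ for a general geometric $V$ of non-central weight would require replacing the missing weight argument by deep and presently unavailable inputs, such as the Bloch--Kato conjecture or the finiteness of the relevant Selmer or motivic cohomology groups.
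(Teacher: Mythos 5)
You are right not to offer a ``proof'': the statement labelled Conjecture \ref{ja} is exactly that in the paper --- Jannsen's Question~2, taken as a hypothesis in Theorem \ref{mainnumber} --- and the authors give no argument for it, only the observation (Remark \ref{remark1.9}, via Bella\"{\i}che's Proposition \ref{be}) that for $V=V_pE^\ast(1-k)$, $k\geq 1$, the injectivity statement is equivalent to the vanishing of $H^2(G_S(F),V_pE^\ast(1-k))$, i.e.\ to Jannsen's Conjecture~1. Your Poitou--Tate reduction of the injectivity to $\Sha^2(G_S(F),V)=0$ is precisely that equivalence, your weight bookkeeping ($W=V^\ast(1)$ pure of weight $-w-2\neq-1$) is correct, and your sketch of the function-field case via Frobenius weights is the mechanism behind Jannsen's Theorem~4, which is exactly what the paper invokes in Remark \ref{rem1.11} and Theorem \ref{theo:Janfunction}; your list of known number-field cases (Soul\'e for $\QQ_p(n)$, $n\geq 2$, and the conditional status for twists of $V_pE$) is also accurate. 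So there is nothing to fault and nothing to compare against: the paper has no proof of this statement, and your text correctly situates it as an open conjecture together with the standard reformulation and the one setting (characteristic $\ell\neq p$) where the analogue is a theorem.
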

	\begin{proposition}\label{be}\cite[Proposition 5.1]{be}
	Let V be a p-adic geometric representation of $G_{S}(F)$ and let $W =V^\ast(1).$ The following are equivalent:
	\begin{enumerate}[(i) ]
		\item The natural map $H^1(G_{S}(F), W) \lra \underset{v\in S}\prod H^1(G_v, W)$ is injective.
		\item  \begin{small}{$\dim H^1(G_{S}(F),V)=[F:\Q] \dim V + \dim H^0(G_{S}(F), V) - \dim H^0(G_{S}(F), V^\ast(1))+\underset{ v\in S ,\ v \text{ finite} }\sum \dim H^0(G_v, V^\ast(1))- \underset{v\mid \infty}\sum \dim H^0(G_v,V)$.}\end{small}
		\item  \begin{small}{$\dim H^2(G_{S}(F),V)=\underset{ v\in S ,\ v \text{ finite} }\sum \dim H^0(G_v, V^\ast(1))-\dim H^0(G_{S}(F), V^\ast(1))$}\end{small}
		
	\end{enumerate}
	Moreover, in (ii) and (iii), the LHS is never less that the RHS. \qed
\end{proposition}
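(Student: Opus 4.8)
The plan is to deduce all three conditions of the proposition, together with the final sentence, from two standard structural facts about the Galois cohomology of the $p$-adic representation $V$ of $G_S(F)$, the key point being that the defect of injectivity in (i) — the Tate--Shafarevich group $\Sha^1(G_S(F),W)$, where $\Sha^i(G_S(F),M):=\ker\big(H^i(G_S(F),M)\to\bigoplus_{v\in S}H^i(G_v,M)\big)$ — is Poitou--Tate dual to the subspace $\Sha^2(G_S(F),V)\subseteq H^2(G_S(F),V)$. The two inputs are: \textbf{(a)} the global Euler--Poincar\'e characteristic formula for $p$-adic coefficients, which (since $p$ is odd, so that $H^i(G_v,V)=0$ for $v\mid\infty$ and $i\geq 1$) reads
$$\dim H^0(G_S(F),V)-\dim H^1(G_S(F),V)+\dim H^2(G_S(F),V)=\sum_{v\mid\infty}\dim H^0(G_v,V)-[F:\Q]\dim V;$$
and \textbf{(b)} the Poitou--Tate nine-term exact sequence for the pair $(V,W)$ with $W=V^\ast(1)$ \cite{nsw}, of which I only need the tail
$$H^1(G_S(F),V)\to\bigoplus_{v\in S}H^1(G_v,V)\to H^1(G_S(F),W)^\ast\to H^2(G_S(F),V)\xrightarrow{\ \lambda\ }\bigoplus_{v\in S}H^2(G_v,V)\to H^0(G_S(F),W)^\ast\to 0,$$
together with local Tate duality $H^2(G_v,V)\cong H^0(G_v,W)^\ast$ at finite $v$ (and $H^2(G_v,V)=0$ for $v\mid\infty$), and the global duality $\Sha^1(G_S(F),W)\cong\Sha^2(G_S(F),V)^\ast$. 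Both (a) and (b) are obtained from the classical statements for finite modules by fixing a $G_S(F)$-stable $\Z_p$-lattice $T\subset V$, applying them to the modules $T/p^nT$, and passing to the limit; here one uses that $S$ contains the primes above $p$ and the archimedean primes, so that all groups are finite-dimensional over $\Q_p$ and the relevant limits are exact. Note that geometricity of $V$ is not needed for the equivalences (i)$\Leftrightarrow$(ii)$\Leftrightarrow$(iii); it is relevant only when one wishes to \emph{verify} the injectivity in (i), as in Conjecture~\ref{ja}.

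Granting (a) and (b), the argument runs as follows. The kernel of the localization map in (i) is by definition $\Sha^1(G_S(F),W)$, so (i) holds if and only if $\Sha^2(G_S(F),V)=0$. Reading off the tail in (b), $\ker\lambda=\Sha^2(G_S(F),V)$, while $\operatorname{im}\lambda$ is the kernel of the surjection $\bigoplus_{v\in S}H^2(G_v,V)\twoheadrightarrow H^0(G_S(F),W)^\ast$; using local duality at the finite places and the vanishing at the archimedean ones, $\dim\operatorname{im}\lambda=\sum_{v\in S,\,v\nmid\infty}\dim H^0(G_v,W)-\dim H^0(G_S(F),W)$. Hence
$$\dim H^2(G_S(F),V)=\dim\Sha^2(G_S(F),V)+\sum_{v\in S,\,v\nmid\infty}\dim H^0(G_v,W)-\dim H^0(G_S(F),W).$$
This exhibits the left-hand side of (iii) as its right-hand side plus the non-negative quantity $\dim\Sha^2(G_S(F),V)$: so the LHS of (iii) is never less than the RHS, with equality if and only if $\Sha^2(G_S(F),V)=0$, i.e. if and only if (i) holds. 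Thus (i)$\Leftrightarrow$(iii).

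Finally, solving the Euler characteristic formula (a) for $\dim H^1(G_S(F),V)$ and substituting the displayed expression for $\dim H^2(G_S(F),V)$, a short rearrangement (using $W=V^\ast(1)$) turns the result into exactly the right-hand side of (ii) plus $\dim\Sha^2(G_S(F),V)$. Hence the LHS of (ii) is never less than its RHS, with equality if and only if $\Sha^2(G_S(F),V)=0$, so (ii)$\Leftrightarrow$(iii)$\Leftrightarrow$(i), which completes the argument. The only genuinely non-formal ingredient is the passage from finite to $\Q_p$-coefficients in (a) and (b) and the bookkeeping of the archimedean contributions — harmless here because $p$ is odd — while everything downstream is linear algebra inside the Poitou--Tate sequence; so I expect that limit/archimedean step to be the main, but entirely routine, obstacle.
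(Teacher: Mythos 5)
Your argument is correct: the paper itself offers no proof of this proposition (it is quoted from \cite{be} with a \qed), and your route — identifying the failure of (i) with $\Sha^1(G_S(F),W)\cong \Sha^2(G_S(F),V)^\ast$ via Poitou--Tate, reading off $\dim H^2(G_S(F),V)$ from the tail of the nine-term sequence plus local duality, and then feeding this into the global Euler--Poincar\'e characteristic formula to get (ii) — is exactly the standard proof given in the cited reference, including the observation that the discrepancy in (ii) and (iii) is the non-negative quantity $\dim \Sha^2(G_S(F),V)$, which yields the final ``LHS $\geq$ RHS'' assertion. Your side remarks (that geometricity is irrelevant to the equivalences, and that the only technical point is the passage from finite to $\Q_p$-coefficients and the vanishing of archimedean terms for odd $p$) are also accurate.
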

\begin{remark}\label{remark1.9}
Let $k$ be a positive integer.
By \cite [Theorem 5(a)] {j}, the statement (iii) in  Proposition \ref{be} for $V=V_pE^\ast(1-k)$   is equivalent to $H^2(G_{S}(F),V_pE^\ast(1-k))=0$. On the other hand, \cite[Conjecture 1, page 317]{j} states that  with  $k$ as above, $H^2(G_{S}(F),V_pE^\ast(1-k))=0$. Thus the statement in  Proposition  \ref{be}(i)  for $V_pE^\ast(1-k)$ with  $k \geq 1$ is equivalent to the \cite[Conjecture 1, page 317]{j}.
\end{remark}
Assuming  Conjecture {\ref{ja}}, the main result of \S \ref{section2} follows from Proposition \ref{prop2.2}:
 \begin{theorem}\label{mainnumber}
 Let $E/F$ be an elliptic curve with good reduction at all the primes of $F$ dividing $p$. Assume that $R(E/F)$ is finite and  Jannsen's Conjecture \ref{ja} is true. Then,  $\chi\big(G, R(E/F_\infty)\big)$ is finite. 
\end{theorem}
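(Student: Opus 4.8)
The plan is to derive Theorem \ref{mainnumber} as a direct consequence of Proposition \ref{prop2.2}. By that proposition, once we know $R(E/F)$ is finite, it suffices to show that $R(E^k/F)$ is finite for every integer $k \geq 1$; the finiteness of $\chi\big(G, R(E/F_\infty)\big)$ then follows. So the entire content of the theorem is the reduction of the finiteness of the twisted fine Selmer groups $R(E^k/F)$ to Jannsen's Conjecture \ref{ja}.

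First I would translate $R(E^k/F)$ into Galois cohomology. By definition $R(E^k/F)$ is the kernel of the localization map $H^1(G_S(F), E_{p^\infty}(k)) \to \bigoplus_{v \in S} \prod_{w \mid v} H^1(F_w, E_{p^\infty}(k))$, so proving $R(E^k/F)$ is finite amounts to showing this map has finite kernel. Passing to $\Q_p$-coefficients, the group $E_{p^\infty}(k)$ is the Pontryagin dual (up to finite groups and twisting) of $T_pE(-k)$, and $V_pE(k) \cong (V_pE^\ast(1-k))^\ast(1)$ since the Weil pairing gives $V_pE^\ast \cong V_pE(-1)$, hence $V_pE(k) \cong V_pE^\ast(1+k)$... more carefully, with $V = V_pE^\ast(1-k)$ one has $W := V^\ast(1) = V_pE(k-1)(1) = V_pE(k)$, so the injectivity statement in Conjecture \ref{ja}(i) for $V = V_pE^\ast(1-k)$ is precisely the injectivity of $H^1(G_S(F), V_pE(k)) \to \prod_{v \in S} H^1(G_v, V_pE(k))$. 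The representation $V_pE^\ast(1-k)$ is geometric (it comes from the motive $h^1(E)^\ast(1-k)$) and, by the Weil bounds, pure of weight $-1 - 2(1-k) = 2k - 3$, which is different from $-1$ precisely when $k \neq 1$; the case $k = 1$ must be handled by a separate argument or excluded, but note that weight $2k-3 \neq -1$ holds for all $k \geq 2$, and one checks that $k=1$ corresponds to $V_pE^\ast$ which has weight $1 \neq -1$, so in fact Jannsen's conjecture applies for every $k \geq 1$.

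Granting Conjecture \ref{ja} (equivalently, by Remark \ref{remark1.9} and Proposition \ref{be}, the vanishing $H^2(G_S(F), V_pE^\ast(1-k)) = 0$), the localization map on $\Q_p$-cohomology is injective, i.e. has trivial kernel as a $\Q_p$-vector space. To pass from this to the finiteness of the kernel on $E_{p^\infty}(k)$-coefficients, I would use the standard comparison: the kernel of $H^1(G_S(F), E_{p^\infty}(k)) \to \bigoplus_v H^1(G_v, E_{p^\infty}(k))$ is cofinitely generated over $\Z_p$ (it is a subgroup of $H^1(G_S(F), E_{p^\infty}(k))$, which is cofinitely generated), so it is finite if and only if its $\Q_p/\Z_p$-corank is zero. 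That corank equals $\dim_{\Q_p}$ of the kernel of the corresponding map on $V_pE(k)/T_pE(k)$-coefficients, which injects (via the snake lemma applied to $0 \to T_pE(k) \to V_pE(k) \to E_{p^\infty}(k) \to 0$ and the divisibility of $V_pE(k)$) into the kernel on $V_pE(k)$-coefficients — and the latter is $0$ by Jannsen. Hence the corank is $0$ and $R(E^k/F)$ is finite for every $k \geq 1$.

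Finally, invoking Proposition \ref{prop2.2} with the hypothesis that $R(E/F)$ is finite (given) and that each $R(E^k/F)$ is finite (just established), we conclude that $\chi\big(G, R(E/F_\infty)\big)$ is finite, completing the proof. The main obstacle is the bookkeeping in the second paragraph: matching the twist conventions so that Jannsen's conjecture for $V_pE^\ast(1-k)$ really produces the injectivity statement for $E_{p^\infty}(k)$-cohomology that controls $R(E^k/F)$, and confirming the purity hypothesis (weight $\neq -1$) is met for the relevant range of $k$; the corank-versus-finiteness argument is routine once the cohomological vanishing is in hand.
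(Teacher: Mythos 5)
Your proposal is correct and follows essentially the same route as the paper: reduce via Proposition \ref{prop2.2} to the finiteness of $R(E^k/F)$ for all $k\geq 1$, apply Conjecture \ref{ja} to $V=V_pE^\ast(1-k)$ with $W=V^\ast(1)=V_pE(k)$ after checking the purity condition, and conclude finiteness because the kernel of the map on $V_pE(k)$-coefficients controls the divisible part of $R(E^k/F)$. The only blemish is the weight bookkeeping: $V_pE^\ast$ has weight $+1$ (it is the realization of $h^1(E)$, not its dual), so $V_pE^\ast(1-k)$ is pure of weight $2k-1$ rather than $2k-3$ --- your direct check at $k=1$ (weight $+1$) in fact exposes this slip --- but since $2k-1\neq -1$ for all $k\geq 1$, the conclusion and the rest of the argument are unaffected.
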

\begin{proof}
By Proposition \ref{prop2.2}, it suffices to show that $ R(E^k/F)$ is finite for every $k \geq 1$.
For each $k \geq 1$,  recall that the pure weight of $V_pE^\ast(1-k)$ is $2k-1$ (see \cite[Page 8]{be}).
As $2k-1\neq -1$, by  Conjecture \ref{ja}, we get that  \begin{small}$H^1(G_{S}(F), V_pE(k) ) \overset{\phi_k}\lra \underset{v\in S}\prod H^1(G_v,V_pE(k) )$\end{small} is injective. It follows that  \begin{small}$R(E^k/F)$ = $\ker \big(H^1(G_{S}(F), E_{p^\infty}(k)) ) \rightarrow \underset{v\in S}\prod H^1(G_v, E_{p^\infty}(k) \big)$\end{small} is finite. Indeed, ker$(\phi_k)$ surjects onto the divisible part of $ R(E^k/F)$. This completes the proof.
\end{proof}

\begin{remark}\label{rem1.11} Jannsen has shown \cite[Theorem 4]{j} that the analogue of his Conjecture \ref{ja} holds true for   function field{s}  of characteristic  $\ell\neq p$. Using \cite[Theorem 4]{j}, we prove the existence of   $\chi(G,R(E/F_\infty))$ for the false  Tate extension in  function field{s} of characteristic  $\ell\neq p$ setting, in Theorem \ref{theo:Janfunction}.

\end{remark}

 We continue to discuss  $\chi(G,R(E/F_\infty))$ under different sets of hypotheses. 

\begin{lemma} \label{lem2.7}
Assume that $R(E/F)$ is finite. Then $\chi(G,R(E/F_{\infty}))$ exists if and only if $H^1\big(H,R(E/F_{\infty})\big)^{\Gamma}$ is finite.
\end{lemma}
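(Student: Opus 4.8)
\textbf{Proof proposal for Lemma \ref{lem2.7}.}

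The plan is to reduce the existence of $\chi(G, R(E/F_\infty))$ to a single finiteness condition by systematically analyzing the cohomology groups $H^i(G, R(E/F_\infty))$ via the Hochschild--Serre spectral sequence for $1 \to H \to G \to \Gamma \to 1$. First I would record the structural facts available from the hypothesis $R(E/F)$ finite, exactly as in the proof of Proposition \ref{prop2.2}: namely that $R(E/F_\cyc)^\vee$ is $\Z_p[[\Gamma]]$-torsion, that $H^2(G_S(F_\cyc), E_{p^\infty}) = 0$ and hence $H^2(G_S(F_\infty), E_{p^\infty}) = 0$, and that (by \cite[Lemma 3.1]{cs}) $R(E/F_\infty)^\vee$ is a torsion $\Z_p[[G]]$-module, with $R(E/F_\infty)^G$ finite (by Proposition \ref{thm:H_0}). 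Since $G$ has $p$-cohomological dimension $2$, the only cohomology groups in play are $H^0$, $H^1$, $H^2$, and $H^0(G, R(E/F_\infty)) = R(E/F_\infty)^G$ is already known to be finite; so the existence of $\chi$ is equivalent to the finiteness of both $H^1(G, R(E/F_\infty))$ and $H^2(G, R(E/F_\infty))$.

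Next I would feed $M := R(E/F_\infty)$ into the Hochschild--Serre spectral sequence $H^i(\Gamma, H^j(H, M)) \Rightarrow H^{i+j}(G, M)$. Because $H$ and $\Gamma$ are each $\cong \Z_p$, the only nonzero terms have $i, j \in \{0,1\}$, so the spectral sequence degenerates at $E_2$ into short exact sequences:
\begin{small}
\[
0 \to H^1(\Gamma, H^0(H,M)) \to H^1(G,M) \to H^0(\Gamma, H^1(H,M)) \to 0,
\]
\[
H^2(G,M) \cong H^1(\Gamma, H^1(H,M)).
\]
\end{small}
Now $H^0(H, M) = R(E/F_\infty)^H$, and by the control theorem / the fact that $R(E/F_\cyc)^\vee$ is $\Z_p[[\Gamma]]$-torsion, $H^1(\Gamma, H^0(H,M)) = H^1(\Gamma, R(E/F_\cyc))$ is finite (its Pontryagin dual is $H^0(\Gamma, (R(E/F_\cyc)^\vee))$ up to the torsion hypothesis, so it is cofinite of corank $0$). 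Hence $H^1(G,M)$ is finite if and only if $H^0(\Gamma, H^1(H,M))$ is finite. Similarly, for a $\Z_p[[\Gamma]] \cong \Z_p[[\Gamma]]$-cofinitely-generated module $N$, $H^1(\Gamma, N)$ is finite $\iff$ $H^0(\Gamma, N)$ is finite $\iff$ $N$ has $\Z_p[[\Gamma]]$-corank $0$; applying this to $N = H^1(H, M)$ gives that $H^2(G,M) = H^1(\Gamma, H^1(H,M))$ is finite if and only if $H^0(\Gamma, H^1(H,M))$ is finite. Combining, both $H^1(G,M)$ and $H^2(G,M)$ are finite if and only if the single group $H^0(\Gamma, H^1(H, M)) = H^1(H, R(E/F_\infty))^\Gamma$ is finite, which is exactly the asserted equivalence.

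The main obstacle I anticipate is the bookkeeping around whether $H^1(H, R(E/F_\infty))$ is cofinitely generated over $\Z_p[[\Gamma]]$ (so that the ``finite $\iff$ corank zero'' dichotomy for $\Gamma \cong \Z_p$ applies cleanly to both $H^1(\Gamma, -)$ and $H^0(\Gamma, -)$); this should follow from the torsionness of $R(E/F_\infty)^\vee$ over $\Z_p[[G]]$ together with the fact that $H$ is a one-dimensional quotient group, but it needs to be stated carefully. A secondary point to verify is the finiteness of $H^1(\Gamma, H^0(H, R(E/F_\infty)))$, which uses that $R(E/F)$ finite forces $R(E/F_\cyc)^\vee$ to be $\Z_p[[\Gamma]]$-torsion; this is already invoked in Proposition \ref{prop2.2} and can be cited from there. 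Everything else is the formal degeneration of the spectral sequence for a group of cohomological dimension $2$ built from two copies of $\Z_p$, which is routine.
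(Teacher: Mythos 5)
Your skeleton is the same as the paper's: feed $M=R(E/F_\infty)$ into Hochschild--Serre for $1\to H\to G\to\Gamma\to 1$, use the exact sequence $0\to H^1(\Gamma,M^H)\to H^1(G,M)\to H^1(H,M)^\Gamma\to 0$ together with the finiteness of $H^0(G,M)$ from Proposition \ref{thm:H_0}, and dispose of $H^2$; the only structural difference is that the paper kills $H^2(G,M)$ via Howson's formula (Theorem \ref{theo:Ho}) and the $\Z_p[[G]]$-torsionness of $M^\vee$, whereas you use the degeneration isomorphism $H^2(G,M)\cong H^1(\Gamma,H^1(H,M))$, which is equally workable. However, one load-bearing justification is wrong as written. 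You claim $H^1(\Gamma,H^0(H,M))$ is finite because $R(E/F_\cyc)^\vee$ is $\Z_p[[\Gamma]]$-torsion, ``so it is cofinite of corank $0$''. Torsionness gives no such thing: for $X=\Z_p[[\Gamma]]/(\gamma-1)\cong\Z_p$ the module is torsion yet its invariants and coinvariants are infinite. Since this finiteness is exactly what the forward implication (finiteness of $H^1(H,M)^\Gamma$ $\Rightarrow$ existence of $\chi$) rests on, the step as justified would fail. The repair is immediate and is what the paper does: $H^0(\Gamma,M^H)=H^0(G,M)$ is finite by Proposition \ref{thm:H_0}, and for a cofinitely generated $\Z_p[[\Gamma]]$-module $N$, finiteness of $H^0(\Gamma,N)$ forces $N^\vee$ to be torsion with characteristic ideal prime to $\gamma-1$, hence $H^1(\Gamma,N)$ is finite. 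This also spares you the identification $H^0(H,M)=R(E/F_\cyc)$, which is only an equality up to a control theorem for $F_\infty/F_\cyc$ and is not needed at all.

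Relatedly, the three-way equivalence you invoke, ``$H^1(\Gamma,N)$ finite $\iff$ $H^0(\Gamma,N)$ finite $\iff$ $\corank_{\Z_p[[\Gamma]]}(N)=0$'', is false in both unstated directions: if $N^\vee$ is free of rank one then $H^1(\Gamma,N)=0$ while $H^0(\Gamma,N)$ is infinite, and if $N^\vee\cong\Z_p$ with trivial $\Gamma$-action then $N$ has corank zero while $H^0(\Gamma,N)$ is infinite. Only the one-way implication ``$H^0(\Gamma,N)$ finite $\Rightarrow$ $H^1(\Gamma,N)$ finite'' holds, and fortunately that is the direction you actually use, both for $N=M^H$ and for $N=H^1(H,M)$ when bounding $H^2(G,M)$ (the converse direction of the lemma needs nothing beyond the surjection $H^1(G,M)\twoheadrightarrow H^1(H,M)^\Gamma$). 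With the justification of the finiteness of $H^1(\Gamma,M^H)$ replaced as above and the dichotomy stated only in its valid direction, your argument is correct and coincides in substance with the paper's proof.
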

\begin{proof}
      Note that $G$ has $p$-cohomological dimension 2 and by Proposition \ref{thm:H_0} $H^0(G,R(E/F_\infty))$ is finite. 
{ Further as noted in the proof of Proposition \ref{prop2.2}, $R(E/F)$  is finite implies that $R(E/F_\cyc)^\vee$ and $R(E/F_\infty)^\vee$ are finitely generated torsion modules over $\Z_p[[\Gamma]]$  and $\Z_p[[G]]$, respectively.} Hence to show $\chi(G,R(E/F_{\infty}))$ is finite it suffices to show that $H^1(G,R(E/F_\infty))$ is finite  (by Theorem \ref{theo:Ho}). Now from the Hochschild-Serre spectral sequence, we have
      \begin{small}
      \begin{equation}\label{equation11}
      0\rightarrow H^1(\Gamma, R(E/F_{\infty})^H)\rightarrow H^1(G,R(E/F_{\infty})) \rightarrow H^1(H,R(E/F_{\infty}))^{\Gamma} \rightarrow 0
      \end{equation}
  \end{small}

      \noindent { 
      Notice that $F_\infty/F$ satisfies the conditions in \cite[Lemma 3.2]{cs} and from the proof of that lemma, we obtain that the kernel and the cokernel of the map $R(E/F_\cyc)\lra R(E/F_\infty)^H$ are cofinitely generated $\Z_p$ modules. It follows that $R(E/F_\infty)^H$ is a cotorsion $\Z_p[[\Gamma]]$ module. Then by applying Lemma \ref{theo:Ho}, we obtain that the finiteness of $H^1(\Gamma,R(E/F_{\infty})^H)$ is equivalent to the finiteness of $H^0(\Gamma, R(E/F_\infty)^H)=H^0(G,R(E/F_\infty))$. 
    Consequently,} from \eqref{equation11}, we get that $H^1(G,R(E/F_{\infty}))$ is finite if and only if $H^1(H,R(E/F_{\infty}))^{\Gamma}$ is finite.
\end{proof}
\begin{proposition}\label{lemma2.8}
		Let $R(E/F)$ be  finite. Also assume that $\corank_{\Z_p}(R(E/\f)^H)=\corank_{\Z_p}(R(E/\f)_H)=r$ (say) with $r\leq 1$. Then $\chi(G,R(E/F_{\infty}))$ exists and $\chi(G,R(E/F_{\infty}))=1 $.
\end{proposition}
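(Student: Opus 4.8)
The plan is to leverage Lemma \ref{lem2.7} together with a rank computation for the term $H^1(H, R(E/F_\infty))^\Gamma$. Since $R(E/F)$ is finite, Lemma \ref{lem2.7} tells us that $\chi(G, R(E/F_\infty))$ exists if and only if $H^1(H, R(E/F_\infty))^\Gamma$ is finite. So the first step is to get a handle on $H^1(H, R(E/F_\infty))$ as an $\Z_p[[\Gamma]]$-module. Since $H \cong \Z_p$ has $p$-cohomological dimension $1$, for the discrete module $R(E/F_\infty)$ we have $H^1(H, R(E/F_\infty)) = R(E/F_\infty)_H$ (the $H$-coinvariants, up to the usual identification via the dualizing module; more precisely, dualizing, $H^1(H, R(E/F_\infty))^\vee$ is a submodule of $R(E/F_\infty)^\vee$ related to $(R(E/F_\infty)^\vee)_H$ via the augmentation). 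The key numerical input is the hypothesis $\corank_{\Z_p}(R(E/F_\infty)^H) = \corank_{\Z_p}(R(E/F_\infty)_H) = r$.

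Next I would argue that the equality $\corank_{\Z_p}(R(E/F_\infty)^H) = \corank_{\Z_p}(R(E/F_\infty)_H)$ forces $R(E/F_\infty)^\vee$ to behave, at the level of $H$-(co)invariants, like a module whose $H$-Euler characteristic vanishes; concretely, writing $M = R(E/F_\infty)^\vee$, the hypothesis says $\corank_{\Z_p} M^H = \corank_{\Z_p} M_H$, i.e. $H_0(H, M)$ and $H_1(H, M)$ have the same $\Z_p$-rank. Then I would descend one more step along $\Gamma$: apply the Hochschild–Serre sequence \eqref{equation11} and Theorem \ref{theo:Ho} (noting $G$ is pro-$p$ of cohomological dimension $2$, so $\corank_{\Z_p[[G]]} R(E/F_\infty) = \sum_i (-1)^i \corank_{\Z_p} H^i(G, R(E/F_\infty))$, and recall from the proof of Proposition \ref{prop2.2} that $R(E/F_\infty)^\vee$ is $\Z_p[[G]]$-torsion once $R(E/F)$ is finite, so the alternating sum of $\Z_p$-coranks of the $H^i(G, -)$ is zero). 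Combined with finiteness of $H^0(G, R(E/F_\infty))$ (Proposition \ref{thm:H_0}), this gives $\corank_{\Z_p} H^1(G, R(E/F_\infty)) = \corank_{\Z_p} H^2(G, R(E/F_\infty))$. Feeding in the rank-$r$ hypothesis with $r \leq 1$, a short spectral-sequence bookkeeping (using that $\Gamma \cong \Z_p$ raises or lowers $\Z_p$-corank by at most the corank of the module it acts on) forces both $H^1$ and $H^2$ to have $\Z_p$-corank $0$, hence to be finite; this is where the restriction $r \leq 1$ is essential, since for $r \geq 2$ the $\Gamma$-cohomology coranks need not collapse. In particular $H^1(H, R(E/F_\infty))^\Gamma$ is finite, so by Lemma \ref{lem2.7} the Euler characteristic exists.

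For the value, once we know all $H^i(G, R(E/F_\infty))$ are finite, I would show $\chi(G, R(E/F_\infty)) = 1$ by a direct cancellation in the Hochschild–Serre sequence. From \eqref{equation11} we get
\[
\chi(G, R(E/F_\infty)) = \chi\big(\Gamma, R(E/F_\infty)^H\big) \cdot \chi\big(\Gamma, H^1(H, R(E/F_\infty))\big)^{-1}.
\]
Because $\Gamma \cong \Z_p$ has cohomological dimension $1$, for any cofinitely generated discrete $\Z_p[[\Gamma]]$-module $N$ on which $\Gamma$ acts, $\chi(\Gamma, N) = \# H^0(\Gamma, N) / \# H^1(\Gamma, N) = 1$ whenever both are finite (this is the standard fact that a $\Z_p$-procyclic group has trivial Euler characteristic on finite modules, and more generally the $\Gamma$-Euler characteristic of a torsion $\Z_p[[\Gamma]]$-module, when defined, is a unit times the value of its characteristic element at the trivial character — but here the cleanest route is: $\gamma - 1$ acting on $N$ has finite kernel and cokernel of equal order). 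Applying this to $N = R(E/F_\infty)^H$ and $N = H^1(H, R(E/F_\infty))$ — both are cofinitely generated over $\Z_p$ by the corank hypothesis and the finiteness just established — gives $\chi(\Gamma, R(E/F_\infty)^H) = 1$ and $\chi(\Gamma, H^1(H, R(E/F_\infty))) = 1$, whence $\chi(G, R(E/F_\infty)) = 1$.

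The main obstacle I anticipate is the corank bookkeeping in the middle paragraph: one must carefully track $\Z_p$-coranks through both the $H$-level (using $\mathrm{cd}(H) = 1$ and the dualizing-module identification $H^1(H, M^\vee)^\vee \cong$ the relevant $H$-(co)invariants of $M$) and the $\Gamma$-level Hochschild–Serre sequence, and then verify that the hypothesis $r \leq 1$ genuinely suffices to kill the coranks of $H^1(G, -)$ and $H^2(G, -)$ simultaneously. The identity $\chi(\Gamma, N) = 1$ for cofinitely generated $N$ with finite $\Gamma$-cohomology, and the torsionness of $R(E/F_\infty)^\vee$ (already in hand from the proof of Proposition \ref{prop2.2}), are the routine ingredients once the corank accounting is pinned down.
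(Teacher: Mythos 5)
Your overall strategy (reduce, via Lemma \ref{lem2.7}, to the finiteness of $H^1(H,R(E/\f))^{\Gamma}$) is the same as the paper's, but the step you rely on to get that finiteness does not work. The purely numerical bookkeeping you propose cannot force $\corank_{\Z_p}H^1(G,R(E/\f))=\corank_{\Z_p}H^2(G,R(E/\f))=0$: since $H^1(H,R(E/\f))^\vee$ is finitely generated over $\Z_p$ of rank $r$, the equality $\corank H^1(G,-)=\corank H^2(G,-)$ is automatic and carries no information, and with $r=1$ all of your constraints ($\Z_p[[G]]$-torsionness, finiteness of $H^0(G,R(E/\f))$, equal coranks of invariants and coinvariants) are compatible with $H^1(H,R(E/\f))^{\Gamma}$ having corank $1$. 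Concretely, take $M=\Z_p e_1\oplus\Z_p e_2$ with $h$ acting unipotently ($he_2=e_2+e_1$, $he_1=e_1$) and $\gamma$ acting by $e_1\mapsto e_1$, $e_2\mapsto u^{-1}e_2$ where $u=\chi(\gamma)$: then $M\cong\Z_p[[X]]/(X^2)$ over $\Z_p[[H]]$, $\rk M^H=\rk M_H=1$, $(M_H)_\Gamma$ is finite, yet $(M^H)_\Gamma\cong\Z_p$. So some structural input beyond corank counting is indispensable. The paper supplies it by studying the map $g:(R(E/\f)^{\vee})^H\to(R(E/\f)^{\vee})_H$ (inclusion followed by projection) and proving, via the structure theorem over $\Z_p[[H]]\cong\Z_p[[X]]$ and the hypothesis $r\le 1$ (which pins down the $(X)$-primary part to a single summand on which $g$ is an isomorphism), that $g$ is a pseudo-isomorphism; then $\big((R(E/\f)^{\vee})^H\big)_\Gamma$ is finite because $\big((R(E/\f)^{\vee})_H\big)_\Gamma$ is (Proposition \ref{thm:H_0}) and the two $\Z_p[[\Gamma]]$-characteristic ideals agree. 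This comparison map is entirely absent from your argument.

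The evaluation $\chi(G,R(E/\f))=1$ also has a gap: it is not true that $\chi(\Gamma,N)=1$ for every cofinitely generated discrete module $N$ with finite $H^0(\Gamma,N)$ and $H^1(\Gamma,N)$. For instance $N=\Q_p/\Z_p$ with $\gamma$ acting by multiplication by $1+p$ has $\#N^{\Gamma}=p$ and $N_{\Gamma}=0$, so $\chi(\Gamma,N)=p$; in general $\chi(\Gamma,N)$ is the $p$-part of the characteristic element of $N^\vee$ evaluated at the trivial character, not $1$. So your factorization $\chi(G,R(E/\f))=\chi(\Gamma,R(E/\f)^H)\cdot\chi(\Gamma,H^1(H,R(E/\f)))^{-1}$ is fine, but you cannot conclude by claiming each factor is $1$. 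What actually makes the product collapse to $1$ in the paper is again the pseudo-isomorphism $g$: it forces $\mathrm{char}_{\Z_p[[\Gamma]]}\big(H_0(H,R(E/\f)^\vee)\big)=\mathrm{char}_{\Z_p[[\Gamma]]}\big(H_1(H,R(E/\f)^\vee)\big)$, i.e.\ the Akashi series of $R(E/\f)^\vee$ is $1$, and then $\chi(G,R(E/\f))=1$ follows from \cite[Lemma 3.6]{js}. In short, both halves of your proof hinge on the one structural claim you did not prove, and the second half additionally uses a false general statement about $\Gamma$-Euler characteristics.
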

\begin{proof}
	As $R(E/F_\infty)^H$ is a co-finitely generated $\Z_p$ module, it follows  by Nakayama lemma that $R(E/F_\infty)$ is a co-finitely generated $\Z_p[[H]]$ module.
	Consider the exact sequence of $\Z_p[[\Gamma]]$ modules:
   \begin{small}
	\begin{equation} \label{eq15}
		0\lra \ker(g)\lra (R(E/\f)^{\vee})^H \overset{g}\lra (R(E/\f)^{\vee})_H \lra \text{coker}(g) \lra 0, 
	\end{equation}
\end{small}
where $g$ is the composition of the natural maps  $(R(E/\f)^{\vee})^H\inj R(E/\f)^{\vee}\surj (R(E/\f)^{\vee})_H $.
 { Recall that $\Z_p[[\Gamma]]$ has Krull dimension $2$, so a $\Z_p[[\Gamma]]$ module is pseudonull if and only if it is finite.}  
We claim that the map $g$ is a pseudo-isomorphism of $\Z_p[[\Gamma]]$ modules {, i.e.,  $\ker(g)$ and $\coker(g)$ are pseudonull $\Z_p[[\Gamma]]$ modules or equivalently, they are finite}. Assume the claim for a moment. Then from  \eqref{eq15}, we deduce that the characteristic ideal of $(R(E/\f)^{\vee})^H $ and $(R(E/\f)^{\vee})_H $ as $\Z_p[[\Gamma]]$ modules are the same. By Proposition \ref{thm:H_0}, $\big((R(E/\f)^{\vee})_H\big)_{\Gamma} $ is finite and hence  $\big((R(E/\f)^{\vee})^H\big)_{\Gamma}= \big(H^1(H,R(E/\f))^{\Gamma}\big)^{\vee}$ is finite as well. Thus  $\chi(G,R(E/\f))$ exists by Lemma \ref{lem2.7}. Moreover, by our claim the Akashi series (\cite[Page-177]{cfksv}) of $R(E/F_\infty)$,     $Ak_H^G(R(E/F_\infty))=\frac{\text{char}_{\Z_p[[\Gamma]]} (H_0(H,R(E/\f)^{\vee})}{ \text{char}_{\Z_p[[\Gamma]]} (H_1(H,R(E/\f)^{\vee})}=1 $. Consequently, $\chi(G,R(E/\f))=1$ follows from  \cite[Lemma 3.6]{js}.

\medskip

For the rest of the proof, we establish the claim:  the kernel and the cokernel of  $g$ in \eqref{eq15} are finite.
 We only need to consider the case when $\corank_{\Z_p}(R(E/\f)^H)=\corank_{\Z_p}(R(E/\f)_H)=1$.
 Further, it is enough to show $\ker(g)$ is finite in order to deduce that $\text{coker}(g)$ is finite. Identifying $\Z_p[[H]] \cong \Z_p[[X]]$, by the structure theorem of $\Z_p[[X]]$ modules, there is a $\Z_p[[X]]$ module homomorphism 
 \begin{small}{
	\begin{equation}\label{equation13}
		R(E/\f)^{\vee} \lra   \bigoplus\limits_{i=1}^{s} \frac{\Z_p[[X]]}{P_i^{n_i}}
	\end{equation}
}\end{small}
\noindent with finite kernel and cokernel, where $P_i$'s are height one primes in $\Z_p[[X]]$.  { For a $\Z_p[[X]]$ module $M$, set $\text{ann}_X (M):=\{m\in M\ |\ X.m=0\}$,  the elements of $M$ that are annihilated by $X$.} Notice that  $\ran_{\Z_p} \big(\bigoplus\limits_{i=1}^{s}\frac{\Z_p[[X]]}{(P_i^{n_i},X)}\big)=\ran_{\Z_p} (\bigoplus\limits_{i=1}^{s}{ \text{ann}_X\big(\frac{\Z_p[[X]]}{P_i^{n_i}}\big)})=1$. Therefore, there exists $1\leq k\leq s$ such that $P_k=(X)$, $n_k=1$ and $P_i\neq (X)$, for $i\neq k$. Consider the commutative diagram: 
   \begin{small}
	\begin{equation}\label{equation14}
		\begin{tikzcd}
			0\arrow{r}
			&\ker(f_1)\arrow{r}
			&(R(E/\f)^{\vee})_{H}\arrow{r}{f_1}
			& \bigoplus\limits_{i=1}^{s}\frac{\Z_p[[X]]}{(P_i^{n_i},X)}
			\\
			0\arrow{r}
			&\ker(f_2)\arrow{r}\arrow{u}{g_1}
			&(R(E/\f)^{\vee})^{H} \arrow{r}{f_2}\arrow{u}{g}
			&\bigoplus\limits_{i=1}^{s}{ \text{ann}_X\big(\frac{\Z_p[[X]]}{P_i^{n_i}}\big)} \arrow{u}{\bigoplus\limits_{i=1}^{s} g_{2,i}}
		\end{tikzcd}
	\end{equation}
\end{small}
	For an element $x\in { \text{ann}_X\big(\frac{\Z_p[[X]]}{P_i^{n_i}}\big)}$, $g_{2,i}(x)$  denotes the residue class of $x$ in $\frac{\Z_p[[X]]}{(P_i^{n_i},X)}$. The map $g_1$ is induced by the restriction of $g$. The map $g_{2,k}$ is an isomorphism and for $i\neq k$, ${ \text{ann}_X\big(\frac{\Z_p[[X]]}{P_i^{n_i}}\big)}=0$. Thus, $\ker(\bigoplus\limits_{i=1}^{s} g_{2,i})=0$. Also, from \eqref{equation13}, we know that  $\ker(f_2)$ is finite. Consequently, from  \eqref{equation14}, we deduce that $\ker(g)$ is finite. This establishes the claim.   
	\end{proof}

	From now onwards, we assume that $E/F$ has (good) {\it ordinary reduction at all the primes of $F$ dividing $p$}.
We recall the following set of primes of $F_\cyc$ associated to  $E$ from \cite{hv}.  

\begin{small}

$P_0:=\{u \text{ is a prime in } F_\cyc : \ u\nmid p\text{ and } u\mid m  \} ,$

$P_1:=\{u\in P_0: E/F_\cyc \text{ has split multiplicative reduction at } u  \} $ and  $m_1:=\# P_1$,

$P_2:=\{u\in P_0: E \text{ has good reduction at } u \text{ and } E(F_{\cyc,u})_{p^\infty}\neq 0  \}$ and   $m_2:=\#P_2$.
\end{small}

\begin{proposition} \label{thm:euler}
	Assume that $R(E/F)$ is finite and  $S(E/F_{\infty})^\vee$ is a finitely generated  $\mathbb{Z}_p[[H]]$ module of rank $1$. Then $\chi(G,R(E/F_{\infty}))$ exists.
\end{proposition}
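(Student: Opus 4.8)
The plan is to reduce the statement to Lemma \ref{lem2.7}, so that it suffices to show that $H^1(H, R(E/F_\infty))^\Gamma$ is finite. Since $R(E/F)$ is finite, the hypotheses of Lemma \ref{lem2.7} are met, and we also know (as in the proof of Proposition \ref{prop2.2}) that $H^2(F_\infty, E_{p^\infty})=0$, so the sequence \eqref{eq6} becomes a short exact sequence relating $R(E/F_\infty)$, $H^1(G_S(F_\infty),E_{p^\infty})$, the local terms $\oplus K^1_v(E/F_\infty)$, and $H^1_{\mathrm{Iw}}(T_pE/F_\infty)^\vee$. Comparing with the analogous short exact sequence for $S(E/F_\infty)$ via \eqref{equation3}, one obtains an exact sequence
\begin{small}
\begin{equation*}
0 \lra R(E/F_\infty) \lra S(E/F_\infty) \lra \underset{w\mid p}{\bigoplus}\bigl(E(F_{\infty,w})\otimes \Q_p/\Z_p\bigr) \lra C \lra 0
\end{equation*}
\end{small}
with $C$ controlled (in the ordinary case the cokernel term is the relevant local $H^2$ or is zero under the finiteness of $R(E/F)$). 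The idea is to take $H$-cohomology of this sequence and exploit that $H^1(H, S(E/F_\infty))=0$ (recalled from \cite{hv} just before Lemma \ref{lem2.7}) to pin down $H^1(H, R(E/F_\infty))$ in terms of the $H$-cohomology of the local points term $\oplus_{w\mid p}(E(F_{\infty,w})\otimes\Q_p/\Z_p)$.

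Next I would analyze the local term. Since $F_\infty/F$ is the false-Tate extension and $E$ has good ordinary reduction above $p$, the decomposition groups at primes $w\mid p$ are open in $G$, hence again $p$-adic Lie groups of dimension $2$, and $E(F_{\infty,w})\otimes\Q_p/\Z_p$ has a well-understood structure (via the formal group and the ordinary filtration, as in Coates--Howson / Hachimori--Venjakob). The point is that $S(E/F_\infty)^\vee$ being a finitely generated $\Z_p[[H]]$-module of rank $1$ forces, through the sequence above, the dual of the local term to be $\Z_p[[H]]$-finitely generated of a controlled rank, so that $H^1(H,-)$ of the local term is $\Z_p[[\Gamma]]$-cofinitely generated with the rank bookkeeping working out. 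Then $H^1(H,R(E/F_\infty))$ is a cofinitely generated $\Z_p[[\Gamma]]$-module, and the finiteness of $H^1(H,R(E/F_\infty))^\Gamma$ becomes the statement that $\gamma-1$ acts with finite cokernel, equivalently that the characteristic ideal of $H^1(H,R(E/F_\infty))^\vee$ is coprime to $\gamma-1$. This is where I would invoke the finiteness of $R(E/F)$ once more (it gives finiteness of the relevant $\Gamma$-invariants/coinvariants at the cyclotomic level via the control theorem of \cite{wu}, as used in Proposition \ref{prop2.2}), together with the rank-$1$ hypothesis on $S(E/F_\infty)^\vee$ to rule out a $\Z_p[[H]]$-free part contributing a $(\gamma-1)$-divisible piece.

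Concretely, the cleanest route is probably the Akashi-series / Euler-characteristic bookkeeping already used in Proposition \ref{lemma2.8}: write $S(E/F_\infty)^\vee = \Z_p[[H]] \oplus (\text{torsion})$ as $\Z_p[[H]]$-modules up to pseudo-isomorphism, feed this through the dualized exact sequence to express $R(E/F_\infty)^\vee$ in terms of $S(E/F_\infty)^\vee$ and the local points, and compute $H_0(H,-)$ and $H_1(H,-)$ of each term. The free rank-$1$ summand of $S(E/F_\infty)^\vee$ contributes $H_0(H,\Z_p[[H]])=\Z_p$ and $H_1(H,\Z_p[[H]])=0$, and a parallel computation on the local term shows its free part cancels against this; the torsion parts contribute finite $\Gamma$-(co)invariants because $R(E/F)$ is finite. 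This yields that $H^1(G,R(E/F_\infty))$ is finite, and then Theorem \ref{theo:Ho} (the Howson corank formula) combined with $\mathrm{cd}_p(G)=2$ gives the existence of $\chi(G,R(E/F_\infty))$.

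The main obstacle I anticipate is the precise control of the local term $\oplus_{w\mid p}\bigl(E(F_{\infty,w})\otimes\Q_p/\Z_p\bigr)$ as a $\Z_p[[H_w]]$-module and the matching of its $\Z_p[[H]]$-rank with the rank-$1$ hypothesis on $S(E/F_\infty)^\vee$ — in particular making sure that the ``defect'' between $S$ and $R$ at the primes above $p$ does not secretly introduce a $\Z_p[[H]]$-free summand into $R(E/F_\infty)^\vee$ that would kill $\gamma-1$-divisibility; handling the cokernel $C$ (the local $H^2$-type term) in the ordinary case, and verifying $H^2(F_\infty,E_{p^\infty})=0$ propagates correctly, is the technical heart. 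Everything else is standard Hochschild--Serre and structure-theory manipulation.
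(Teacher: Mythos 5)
Your plan assembles the right toolkit (Lemma \ref{lem2.7}, the vanishing $H^1(H,S(E/\f))=0$, Theorem \ref{theo:Ho}, and the Akashi-type bookkeeping of Proposition \ref{lemma2.8}), and this overlaps substantially with what the paper does; but the decisive step is asserted rather than proved, and as stated it is not correct. Your mechanism is that the free rank-one part of $S(E/\f)^{\vee}$ ``cancels against'' the free part of the local term $\oplus_{w\mid p}\big(E(F_{\infty,w})\otimes\Q_p/\Z_p\big)^{\vee}$, leaving only torsion pieces whose $\Gamma$-(co)invariants are finite because $R(E/F)$ is. This cancellation can fail: the free rank-one summand may descend to $R(E/\f)^{\vee}$ itself, i.e.\ $\corank_{\Z_p[[H]]}R(E/\f)=1$ is a genuine possibility. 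The paper has to treat this as a separate case, showing that $\mathrm{coker}(f)^{\vee}$ is then pseudonull, invoking the fact that $S(E/\f)^{\vee}$ has no nonzero pseudonull submodule (\cite[Remark 3.2]{hv}) to conclude $R(E/\f)=S(E/\f)$, and then (since $\lambda=0$ there) deducing $S(E/F)$ finite by a control theorem and quoting \cite[Theorem 4.1]{hv} for the Euler characteristic of the full Selmer group. Nothing in your sketch produces or handles this branch.

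There is a second, related gap in the reduction to ``$\gamma-1$ coprime to the characteristic ideal.'' Finiteness of $R(E/F)$ controls, via the control theorem, the $\Gamma$-coinvariants of $(R(E/\f)^{\vee})_H$ (equivalently the finiteness of $R(E/\f)^G$), whereas Lemma \ref{lem2.7} requires finiteness of $\big(H^1(H,R(E/\f))\big)^{\Gamma}$, i.e.\ of the $\Gamma$-coinvariants of $(R(E/\f)^{\vee})^H$. Passing from coinvariants to invariants is exactly the pseudo-isomorphism established in Proposition \ref{lemma2.8}, and its hypothesis $\corank_{\Z_p}(R(E/\f)^H)=\corank_{\Z_p}(R(E/\f)_H)\leq 1$ must be verified, not assumed. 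The paper does this by first converting the rank-one hypothesis through \cite[Theorem 3.1]{hv} into $\lambda+m_1+2m_2=1$, hence $m_2=0$ and $\lambda\in\{0,1\}$, and then running a case analysis on $\lambda$, on $\corank_{\Z_p[[H]]}R(E/\f)$, and on $\corank_{\Z_p}R(E/F_\cyc)$ (the last compared with $\corank_{\Z_p}R(E/\f)^H$ by a control theorem available precisely because $m_1=m_2=0$ in the $\lambda=1$ case); in the unequal-corank subcase ($\corank_{\Z_p}R(E/\f)^H=1$, $\corank_{\Z_p}R(E/\f)_H=0$) one does not use Proposition \ref{lemma2.8} at all, but observes that $H^1(H,R(E/\f))^{\vee}=(R(E/\f)^{\vee})^H$ is then finite, so Lemma \ref{lem2.7} applies directly. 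Your proposal never extracts $m_2=0$, $\lambda\leq 1$ from the hypothesis and never separates these cases, so the ``rank bookkeeping'' you defer is in fact the whole proof.
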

\begin{proof}
	Let $f$ be the natural inclusion map  $R(E/F_\infty) \overset{f}\hookrightarrow S(E/F_\infty)$. It is shown in \cite[proof of Theorem 5.3]{hv} that $H^1(H,S(E/\f))=0$. Hence $H^1(H,\text{coker}(f))=0$ and we also have:
	\begin{small}
	\begin{equation} \label{eq11}
	0\rightarrow R(E/F_{\infty})^{H}\overset{f_H}\longrightarrow S(E/F_{\infty})^{H}\rightarrow (\text{coker} (f))^{H}\rightarrow H^{1}(H,R(E/F_{\infty})) \rightarrow 0
	\end{equation}
\end{small}
	Now by \cite[Theorem 3.1]{hv}, $S(E/F_\infty)^{\vee}$ is a finitely generated $\Z_p[[H]] $ module and  there is an injective homomorphism  $S(E/\f)^\vee\inj \Z_p[[H]]^{\lambda + m_1 + 2m_2}$ with finite cokernel, where $\lambda=\corank_{\Z_p}(S(E/F_\cyc))$. By our hypothesis that $\corank_{\Z_p[[H]]}(S(E/F_\infty))=1$, we get that $m_2=0$  and $\lambda\in\{0,\ 1\}$. 
	
	First, we consider the case $\lambda=0$. Notice that, $R(E/\f)$ being a subgroup of $S(E/\f)$, $\corank_{\Z_p[[H]]}(R(E/\f)) \leq1$. 
	
In the subcase, when $\corank_{\Z_p[[H]]}(R(E/\f))=0$,
	 by Theorem \ref{theo:Ho}, we get that $\corank_{\Z_p}(R(E/\f)^H)=\corank_{\Z_p}(R(E/\f)_H)$. Again applying Theorem \ref{theo:Ho}, we obtain that $\corank_{\Z_p}(S(E/F_\infty)^H)$=1. Therefore{,} using \eqref{eq11}, we observe that all the hypotheses of Proposition \ref{lemma2.8} are satisfied and the result follows from the same proposition, in this subcase.

	Next, consider the subcase where $\lambda=0$ and $\corank_{\Z_p[[H]]}(R(E/\f))= 1$. Now by our hypothesis, $\corank_{\Z_p[[H]]}(S(E/\f))= 1$. Hence $\corank_{\Z_p[[H]]}(\text{coker}(f))$  $=0 $ and by Theorem \ref{theo:pseudo}, $\text{coker}(f)$ is a $\Z_p[[G]]$ pseudonull module. Further, by \cite[Remark 3.2]{hv}, the maximal pseudonull  submodule of $S(E/\f)^{\vee}$ is 0. Thus 
	 $\text{coker}(f)=0$ and $S(E/F_\infty)=R(E/F_\infty)$. As $\lambda=\corank_{\Z_p}(S(E/F_\cyc))=0$, via a control theorem for $F_\cyc/F$, we deduce that $S(E/F)$ is finite. Now applying \cite[Theorem 4.1]{hv},  $\chi(G, S(E/\f))=\chi(G,R(E/\f))$  is finite.

\medskip

Now, we consider the second case, where $\lambda=1$.
In this case, $m_1=m_2=0$ and $E$ has good reduction at primes $v\mid p$ of $F$. Then, by a control theorem similar to \cite[Lemma 2]{jh}, we get that $\cora_{\Z_p}(R(E/F_\cyc))=\cora_{\Z_p} (R(E/F_\infty)^H)$. Also, given $\lambda=\corank_{\Z_p}(S(E/F_\cyc))=1$, $\corank_{\Z_p}(R(E/F_\cyc))$ is atmost 1.
If $\corank_{\Z_p}(R(E/F_\cyc))=0$, then (by Theorem \ref{theo:Ho})    $\cora_{\Z_p}(R(E/\f)^H)=\cora_{\Z_p}(R(E/\f)_H)=0$. Once again, by applying Proposition \ref{lemma2.8}, we deduce that $\chi(G,R(E/\f))$ exists. On the other hand, if $\corank_{\Z_p}(R(E/F_\cyc))=\corank_{\Z_p}(R(E/F_\infty)^H)=1$, then by Theorem \ref{theo:Ho} $\corank_{\Z_p}(R(E/F_\infty)_H)$ could be $0$  or $\corank_{\Z_p}(R(E/F_\infty)_H)$ could be $ 1$. In the former case, $\chi(G,R(E/\f))$ exists by Lemma \ref{lem2.7}  and in the later case, the existence of $\chi(G,R(E/\f))$ follows from Proposition \ref{lemma2.8}. This completes the proof of this theorem.
	\end{proof}
We give an example  where all the hypotheses of  Proposition \ref{thm:euler} are satisfied:
\begin{example} 
	Let $p=3$, $F=\Q(\mu_3)$ and $F_\infty=\Q(\mu_{3^\infty}, 3^{1/3^{\infty}})$. Consider the following elliptic curve $E$ with LMFDB label 306.a2 :
	\begin{small}
	\begin{equation}
		y^2+xy=x^3-x^2-927x+11097
	\end{equation}
\end{small}
\noindent By discussions on \cite[Pages-129, 130]{gr3}, it follows that $rank(E(F))\geq 1$, $E$ has good, ordinary reduction at the prime of $K$ dividing $p$, $\mu$-invariant of $S(E/F_\cyc)^{\vee}$ vanishes and $\corank_{\Z_p}(S(E/F_\cyc))=1$. By \cite[Theorem 3.1]{hv}, we deduce that $\corank_{\Z_p[[H]]}(S(E/F_\infty))=1$. So, all the conditions of Proposition \ref{thm:euler} are satisfied. 
\end{example}
\begin{remark}
	Among the various cases discussed in the proof of { Proposition} \ref{thm:euler}, we have considered the situations where $\cora_{\Z_p[[H]]}(R(E/\f))=1$.
 Assume that Conjecture \ref{conjb} holds. Then these situations cannot occur. 
\end{remark}

\begin{corollary}\label{corollary2.13}
	Let us keep the hypotheses of Proposition \ref{thm:euler}. Further, assume that Conjecture \ref{conjb} holds. Then, by Proposition \ref{lemma2.8}, $\chi(G,R(E/\f))=1$.
\end{corollary}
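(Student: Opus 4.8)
The plan is to verify the hypotheses of Proposition~\ref{lemma2.8} and then quote it verbatim. Those hypotheses are that $R(E/F)$ is finite --- which is already among the assumptions carried over from Proposition~\ref{thm:euler} --- and that $\corank_{\Z_p}(R(E/\f)^H)=\corank_{\Z_p}(R(E/\f)_H)=r$ for some $r\le 1$. Granting these, Proposition~\ref{lemma2.8} gives at once that $\chi(G,R(E/\f))$ exists and equals $1$, which is the assertion of the corollary.

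First I would dualize the inclusion $R(E/\f)\hookrightarrow S(E/\f)$ to see that $R(E/\f)^\vee$ is a quotient of $S(E/\f)^\vee$; since the latter is a finitely generated $\Z_p[[H]]$-module by the hypothesis of Proposition~\ref{thm:euler}, so is $R(E/\f)^\vee$. Conjecture~\ref{conjb} predicts that $R(E/\f)^\vee$ is pseudonull as a $\Z_p[[G]]$-module; combining this with the finite generation over $\Z_p[[H]]$ just observed, Venjakob's criterion (Theorem~\ref{theo:pseudo}) forces $R(E/\f)^\vee$ to be $\Z_p[[H]]$-torsion, that is, $\corank_{\Z_p[[H]]}(R(E/\f))=0$.

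Next, because $H\cong\Z_p$ has $p$-cohomological dimension $1$, with $H^0(H,-)=(-)^H$ and $H^1(H,-)\cong(-)_H$, Howson's formula (Theorem~\ref{theo:Ho}) applied to the cofinitely generated discrete $\Z_p[[H]]$-module $R(E/\f)$ reads $0=\corank_{\Z_p}(R(E/\f)^H)-\corank_{\Z_p}(R(E/\f)_H)$, so the two coranks coincide; denote the common value by $r$. To bound $r$ from above I would use the inclusion $R(E/\f)^H\hookrightarrow S(E/\f)^H$, which gives $r\le\corank_{\Z_p}(S(E/\f)^H)$; and since $H^1(H,S(E/\f))=0$ (recalled from \cite{hv}) while $\corank_{\Z_p[[H]]}(S(E/\f))=1$ by hypothesis, the same formula gives $\corank_{\Z_p}(S(E/\f)^H)=1$. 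Hence $r\le 1$, all the hypotheses of Proposition~\ref{lemma2.8} are met, and the corollary follows.

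The argument is purely formal once these inputs are assembled, so I do not anticipate a genuine obstacle. The one point deserving a little care is the implication ``pseudonull over $\Z_p[[G]]$ $\Rightarrow$ $\Z_p[[H]]$-torsion'': Venjakob's theorem (Theorem~\ref{theo:pseudo}) requires the module to be finitely generated over $\Z_p[[H]]$ to begin with, and it is precisely the rank-one hypothesis on $S(E/\f)^\vee$ inherited from Proposition~\ref{thm:euler} that supplies this finiteness for $R(E/\f)^\vee$.
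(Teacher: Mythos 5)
Your argument is correct and follows essentially the same route the paper intends: Conjecture~\ref{conjb} together with finite generation of $R(E/\f)^\vee$ over $\Z_p[[H]]$ and Venjakob's criterion (Theorem~\ref{theo:pseudo}) kills the $\corank_{\Z_p[[H]]}(R(E/\f))=1$ possibility (this is exactly the content of the remark preceding the corollary), and then Howson's formula plus the bound coming from $\corank_{\Z_p[[H]]}(S(E/\f))=1$ and $H^1(H,S(E/\f))=0$ verifies the hypotheses of Proposition~\ref{lemma2.8}, which yields $\chi(G,R(E/\f))=1$. You have merely made explicit the verification that the paper leaves implicit in the case analysis of Proposition~\ref{thm:euler}.
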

\begin{remark}

 Let $G$ be a commutative 
compact $p$-adic Lie group without  any element of order
 $p$. Then it is well known that for a finitely generated pseudonull $\Z_p[[G]]$ module $M$  with well-defined Euler characteristic, $\chi(G,M)=1$ holds. In general, there are examples \cite[Example 3]{css} of non-commutative compact $p$-adic Lie group $G$ and finitely generated pseudonull $\Z_p[[G]]$ modules $M$ such that $\chi(G,M)\neq 1$. However, in the special case where $G\cong\Z_p\rtimes \Z_p$ and $M=R(E/F_\infty)^\vee$, we see in  Corollary \ref{corollary2.13} that the pseudonullity of $R(E/F_\infty)^\vee$ implies $\chi(G, R(E/\f))=1$.
\end{remark}
The following variant of Proposition \ref{lemma2.8} can be proved easily.
\begin{lemma}
	Let   $m_1=m_2=0$ and assume that 
$R(E/F_\cyc)$ is finite. Then $R(E/F_\infty)$ is a pseudonull $\Z_p[[G]]$ module and $\chi(G,R(E/F_\infty))$ exists. \qed
	\end{lemma}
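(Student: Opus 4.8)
The plan is to reduce the statement to an application of Proposition \ref{lemma2.8}, exactly as the hint \emph{``can be proved easily''} suggests, by verifying that the hypothesis $R(E/F_\cyc)$ finite forces all the corank conditions there to hold with $r=0$. First I would observe that, since $m_1=m_2=0$, the set $P_1$ is empty and the local contributions at the primes of $F_\cyc$ dividing $m$ that distinguish $R$ from $S$ over $F_\infty$ are trivial; combined with the good reduction hypothesis at primes above $p$, a control theorem of the type used in Proposition \ref{thm:euler} (following \cite[Lemma 2]{jh}) gives $\cora_{\Z_p}(R(E/F_\cyc)) = \cora_{\Z_p}(R(E/F_\infty)^H)$. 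Since $R(E/F_\cyc)$ is finite, the left-hand side is $0$, so $R(E/F_\infty)^H$ is finite, hence in particular a cofinitely generated (indeed finite-corank-zero) $\Z_p$-module. By Nakayama's lemma, $R(E/F_\infty)^\vee$ is then a finitely generated, in fact $\Z_p[[H]]$-torsion, module.

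Next I would feed this into Theorem \ref{theo:Ho} (Howson's formula): since $R(E/F_\infty)^\vee$ is $\Z_p[[H]]$-torsion we have $\cora_{\Z_p[[H]]}(R(E/F_\infty)) = 0$, and therefore $\cora_{\Z_p}(R(E/F_\infty)^H) = \cora_{\Z_p}(R(E/F_\infty)_H) = 0$. This is precisely the hypothesis of Proposition \ref{lemma2.8} with $r = 0 \le 1$. I also need the other running hypothesis of that proposition, namely that $R(E/F)$ is finite; this follows from the finiteness of $R(E/F_\cyc)$ by the control theorem for $F_\cyc/F$ (of the type in \cite{wu}), since a submodule/subquotient of a finite group is finite. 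Then Proposition \ref{lemma2.8} yields both that $\chi(G, R(E/F_\infty))$ exists and that it equals $1$.

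For the pseudonullity claim, I would invoke Theorem \ref{theo:pseudo} (Venjakob's criterion): $R(E/F_\infty)^\vee$ is a $\Z_p[[G]]$-module which we have just shown to be finitely generated and $\Z_p[[H]]$-torsion, so it is pseudonull over $\Z_p[[G]]$. One small point to check along the way is that $R(E/F_\infty)^\vee$ is genuinely finitely generated over $\Z_p[[H]]$ (not merely torsion as an abstract statement): this is where Nakayama applied to the finiteness — hence cofinite generation — of $R(E/F_\infty)^H$ does the work, together with the fact that $H \cong \Z_p$ so $\Z_p[[H]]$ is Noetherian.

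The main obstacle, modest as it is, is the control-theorem step identifying $\cora_{\Z_p}(R(E/F_\cyc))$ with $\cora_{\Z_p}(R(E/F_\infty)^H)$: one must check that the kernel and cokernel of the restriction map $R(E/F_\cyc) \to R(E/F_\infty)^H$ are cofinitely generated (indeed finite-corank) $\Z_p$-modules, which relies on the local terms at primes of $F_\cyc$ in $P_0$ behaving well — and this is exactly where the hypothesis $m_1 = m_2 = 0$ (so $P_1, P_2$ empty) is used, ensuring the relevant $H^0(H, \cdot)$ of the local factors is finite. Everything else is a formal assembly of Theorems \ref{theo:pseudo}, \ref{theo:Ho} and Proposition \ref{lemma2.8}.
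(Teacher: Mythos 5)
Your argument is correct and is essentially the paper's intended route: the paper leaves this as an ``easy variant'' of Proposition \ref{lemma2.8}, to be proved exactly as you do, by using the control theorem (valid since $m_1=m_2=0$ and $E$ is ordinary at $p$) to transfer the finiteness of $R(E/F_\cyc)$ to $R(E/F_\infty)^H$, then Nakayama/Howson to get that $R(E/F_\infty)^\vee$ is finitely generated torsion over $\Z_p[[H]]$, Venjakob's criterion for pseudonullity, and Lemma \ref{lem2.7}/Proposition \ref{lemma2.8} for the Euler characteristic. Your additional observation that $R(E/F)$ is finite (needed for Proposition \ref{lemma2.8}) follows from the finiteness of $R(E/F_\cyc)$ via the control theorem for $F_\cyc/F$ is the right way to reconcile the hypotheses.
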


\noindent Next, we discuss $\chi(G, R(E/F_\infty))$ when $\cora_{\mathbb{Z}_p[[H]]} S(E/F_{\infty})=2$:

\begin{proposition}\label{thm2.10}
	Suppose  $\ran_{\Z}(E(F))> 0$. Also assume  that $R(E/F)$ is finite, $\mu$-invariant of $S(E/F_\cyc)^{\vee}$ vanishes, $\cora_{\mathbb{Z}_p[[H]]} S(E/F_{\infty})=2$ and $m_1=m_2=0$. Then $\chi(G,R(E/F_{\infty}))$ exists. 
\end{proposition}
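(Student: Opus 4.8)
The plan is to reduce, by a case analysis on the $\Z_p$-coranks of $R(E/\f)^H$ and $R(E/\f)_H$, to one of Proposition \ref{lemma2.8} or Lemma \ref{lem2.7}. The first step is to control these coranks at the cyclotomic level. Since the $\mu$-invariant of $S(E/F_\cyc)^\vee$ vanishes, \cite[Theorem 3.1]{hv} gives that $S(E/\f)^\vee$ is a finitely generated $\Z_p[[H]]$-module of rank $\lambda+m_1+2m_2$, where $\lambda:=\cora_{\Z_p}(S(E/F_\cyc))$; together with the hypotheses $\cora_{\Z_p[[H]]}(S(E/\f))=2$ and $m_1=m_2=0$ this forces $\lambda=2$. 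Because $E$ has good ordinary reduction at all primes above $p$ and $\ran_{\Z}(E(F))>0$, \cite[Corollary 4.4]{cs} then yields $\cora_{\Z_p}(R(E/F_\cyc))\le\lambda-1=1$. Finally, using $m_1=m_2=0$ and good reduction above $p$, a control theorem for $\f/F_\cyc$ in the spirit of \cite[Lemma 2]{jh} (as in the proof of Proposition \ref{thm:euler}) gives $\cora_{\Z_p}\big(R(E/\f)^H\big)=\cora_{\Z_p}(R(E/F_\cyc))\le 1$. Note also that $R(E/\f)^\vee$, being a quotient of $S(E/\f)^\vee$, is finitely generated over $\Z_p[[H]]$, so Theorem \ref{theo:Ho} applies to $R(E/\f)$ with the group $H\cong\Z_p$.

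Now set $r:=\cora_{\Z_p}\big(R(E/\f)^H\big)$ and $r':=\cora_{\Z_p}\big(R(E/\f)_H\big)=\cora_{\Z_p}\big(H^1(H,R(E/\f))\big)$. Applying Theorem \ref{theo:Ho} to $H\cong\Z_p$ (which has $p$-cohomological dimension $1$) gives $\cora_{\Z_p[[H]]}(R(E/\f))=r-r'\ge 0$, so that $r'\le r\le 1$ and the only possibilities are $(r,r')\in\{(0,0),(1,1),(1,0)\}$. If $(r,r')=(0,0)$ or $(1,1)$, then $\cora_{\Z_p}(R(E/\f)^H)=\cora_{\Z_p}(R(E/\f)_H)=r\le 1$, and Proposition \ref{lemma2.8} shows $\chi(G,R(E/\f))$ exists (and equals $1$). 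If $(r,r')=(1,0)$, then $H^1(H,R(E/\f))=R(E/\f)_H$ has $\Z_p$-corank $0$, hence is finite, so $H^1(H,R(E/\f))^{\Gamma}$ is finite; since $R(E/F)$ is finite, Lemma \ref{lem2.7} then gives that $\chi(G,R(E/\f))$ exists. This exhausts the cases.

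The only genuinely delicate ingredient is the inequality $\cora_{\Z_p}(R(E/F_\cyc))\le 1$: this is where the hypothesis $\ran_{\Z}(E(F))>0$ is really used, via the Coates--Sujatha comparison of the fine Selmer and Selmer coranks, and it is the point at which $\cora_{\Z_p[[H]]}(S(E/\f))=2$ (rather than $\le 1$, the situation already settled in Proposition \ref{thm:euler}) enters, through $\lambda=2$. Everything after that is bookkeeping with Theorem \ref{theo:Ho}, Proposition \ref{lemma2.8} and Lemma \ref{lem2.7}. As in the remark following Proposition \ref{thm:euler}, if in addition Conjecture \ref{conjb} holds then $R(E/\f)^\vee$ is pseudonull, hence $\Z_p[[H]]$-torsion by Theorem \ref{theo:pseudo}, so $\cora_{\Z_p[[H]]}(R(E/\f))=r-r'=0$; the case $(1,0)$ is then excluded and $\chi(G,R(E/\f))=1$ by Proposition \ref{lemma2.8}.
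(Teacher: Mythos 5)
Your proof is correct and follows essentially the same route as the paper: deduce $\cora_{\Z_p}(S(E/F_\cyc))=2$ from \cite[Theorem 3.1]{hv}, use the Coates--Sujatha comparison (via $\ran_{\Z}(E(F))>0$) to get $\cora_{\Z_p}(R(E/F_\cyc))\leq 1$, and then run the same case analysis as in the proof of Proposition \ref{thm:euler}, invoking Proposition \ref{lemma2.8} and Lemma \ref{lem2.7}. The only cosmetic difference is that you obtain the bound $\cora_{\Z_p[[H]]}(R(E/\f))\leq 1$ via the control theorem and Theorem \ref{theo:Ho} rather than citing \cite[Proposition 4.9]{cs}, which is equally valid.
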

\begin{proof}
	Since $m_1=m_2=0$, we get from \cite[Theorem 3.1]{hv} that $\cora_{\Z_p}(S(E/F_\cyc))$  $=2$. Then by applying \cite[Corollary 4.4 and Proposition 4.9 ]{cs}, we further deduce that $\cora_{\Z_p}(R(E/F_\cyc))\leq 1$ and $\cora_{\Z_p[[H]]}(R(E/\f))\leq 1$.
	
	Now, the result can be deduced following the proof of Proposition \ref{thm:euler}.
\end{proof}

We also briefly mention the commutative case with $G \cong \Z_p^{d+1}$.

\begin{proposition}\label{prop2.17}
	Let $F$ be a totally real field of degree $d$ over $\QQ$ and $K$ be  a CM field which is a quadratic extension of $F$. Let $K_\infty$ be a Galois extension of $K$, such that $G:=\gal(K_\infty/K)\cong \Z_p^{d+1}$.  
	Let $E$ be an elliptic curve over $K$.  Then $\chi(G,R(E/K_\infty))$ exists if $R(E/K)$ is finite.
\end{proposition}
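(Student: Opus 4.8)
The plan is to reduce the statement to a finiteness criterion for the group cohomology $H^i(G, R(E/K_\infty))$ with $G \cong \Z_p^{d+1}$, and then to exploit the fact that $G$ is commutative (indeed pro-$p$, $p$-adic analytic, torsion-free) so that by the remark right after the definition of $\chi(G,M)$ — valid since commutative groups are in particular \enquote{finite by nilpotent} — the Euler characteristic exists as soon as $H^0(G, R(E/K_\infty))$ is finite. Thus the entire content of the proposition collapses to showing that $R(E/K_\infty)^G$ is finite, given that $R(E/K)$ is finite.

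First I would set up a control-type argument comparing $R(E/K)$ with $R(E/K_\infty)^G$. Since $K_\cyc \subset K_\infty$ (as $K_\infty/K$ contains the cyclotomic $\Z_p$ extension — here I would want to note that $G\cong\Z_p^{d+1}$ with $d+1 \geq 2$ forces at least one $\Z_p$ which we may take to be cyclotomic, or more safely invoke that the argument only needs an admissible extension), I would first pass through $K_\cyc$: the finiteness of $R(E/K)$ implies, via a control theorem for $K_\cyc/K$ of the type used by Wuthrich \cite{wu} and recalled in the proof of Proposition \ref{prop2.2}, that $R(E/K_\cyc)^\vee$ is $\Z_p[[\Gamma]]$-torsion; equivalently $H^0(\Gamma, R(E/K_\cyc))$ is finite, and moreover $H^2(G_S(K_\cyc), E_{p^\infty}) = 0$. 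The latter propagates up the tower to give $H^2(G_S(K_\infty), E_{p^\infty}) = 0$, and then by \cite[Lemma 3.1]{cs} (cited in the proof of Proposition \ref{prop2.2}) one gets that $R(E/K_\infty)^\vee$ is a torsion $\Z_p[[G]]$-module. Then I would run the iterated control argument: writing $G = H' \times \Gamma$ for a suitable rank-$d$ subgroup $H'$, or more directly inducting on the rank, one uses the exact sequence \eqref{eq5} relating $R(E/K_\infty)$ to $H^2_{\mathrm{Iw}}(T_pE/K_\infty)$ together with the fact that $H^0(G_S(K_\infty), E_{p^\infty})$ is finite (true since $E_{p^\infty}$ has no $G_S(K_\infty)$-invariants of infinite size — the Tate module is already of rank two and the extension is pro-$p$) to transfer finiteness of $G$-invariants down to $\Gamma$-invariants and ultimately to $K$.

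Concretely, the cleanest route: apply Theorem \ref{theo:Ho} (Howson's formula) with the group $G$, which is compact pro-$p$ $p$-adic Lie without $p$-torsion, to the cofinitely generated discrete module $R(E/K_\infty)$; this gives $\cora_{\Z_p[[G]]}(R(E/K_\infty)) = \sum_i (-1)^i \cora_{\Z_p}(H^i(G, R(E/K_\infty)))$. Since $R(E/K_\infty)^\vee$ is $\Z_p[[G]]$-torsion (established above), the left side is $0$. Combined with the fact that $H^0(G,R(E/K_\infty))$ being finite is what we want, and that one shows each $H^i(G,R(E/K_\infty))$ is at least cofinitely generated over $\Z_p$, the finiteness of $H^0$ plus the commutative \enquote{finite by nilpotent} criterion gives existence of $\chi(G,R(E/K_\infty))$. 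To pin down that $H^0(G,R(E/K_\infty)) = R(E/K_\infty)^G$ is finite, I would invoke a control theorem for $K_\infty/K$ analogous to \cite[Lemma 2]{jh} (as used in Proposition \ref{thm:euler}) or to the control theorem of Wuthrich \cite{wu}: the kernel and cokernel of the natural restriction map $R(E/K) \to R(E/K_\infty)^G$ are controlled by local cohomology terms at the finitely many ramified primes, all of which are finite because $G$ is a $p$-adic Lie group of positive dimension and the relevant local Galois groups have the requisite cohomological finiteness; since $R(E/K)$ is finite, so is $R(E/K_\infty)^G$.

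The main obstacle I anticipate is the control theorem for the non-cyclotomic directions of $K_\infty/K$: establishing that the defining map $R(E/K) \to R(E/K_\infty)^{\gal(K_\infty/K)}$ has finite kernel and cokernel requires a careful analysis of the local terms $K^1_v(E/K_\infty)$ at primes $v$ of $K$ that ramify in $K_\infty$ but are not above $p$ (these can contribute nontrivially in the fine Selmer setting, in contrast to the Selmer setting where such terms often vanish), and at primes above $p$ one needs the ordinary or at least good reduction hypothesis to bound the corresponding local cohomology — though here the statement as given imposes no reduction hypothesis, so I would need to check that for the fine Selmer group the stringent local condition at $p$ makes the local term at $p$ harmless regardless of reduction type, which is indeed a feature of $K^1_v$ versus $J_v$. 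The secondary subtlety is simply bookkeeping: ensuring that $K_\infty$ contains $K_\cyc$ (or replacing the tower by one that does) so that the earlier results over $K_\cyc$ can be fed in; if $K_\infty$ happens not to contain $K_\cyc$ one enlarges to $K_\infty K_\cyc$, applies the argument there, and descends along the finite-index (pro-$p$, hence cohomologically benign) step.
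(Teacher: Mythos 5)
Your opening reduction coincides with the paper's: since $G\cong\Z_p^{d+1}$ is commutative, the remark following the definition of the Euler characteristic shows that $\chi(G,R(E/K_\infty))$ exists as soon as $H^0(G,R(E/K_\infty))$ is finite, so the whole proposition rests on the finiteness of $R(E/K_\infty)^G$. (Your detour through $\Z_p[[G]]$-torsionness, Kato's vanishing, Howson's formula and the discussion of whether $K_\cyc\subset K_\infty$ is not needed for this; the paper uses none of it.)

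The gap is that you never actually establish the finiteness of $R(E/K_\infty)^G$. In the control diagram for $R(E/K)\to R(E/K_\infty)^G$ the cokernel is bounded by the local terms $H^1\big(G_w,E_{p^\infty}(K_{\infty,w})\big)$ for $w\mid v\in S$ and by $H^2\big(G,E_{p^\infty}(K_\infty)\big)$, and the finiteness of these groups is not a formal consequence of $G$ being a $p$-adic Lie group of positive dimension, which is the only justification your sketch offers; it depends on how the decomposition subgroups of $G$ act on the (possibly infinite, corank $\le 2$) modules $E_{p^\infty}(K_{\infty,w})$ and $E_{p^\infty}(K_\infty)$, and you yourself flag this as ``the main obstacle'' without resolving it. Tellingly, your argument nowhere uses the hypotheses that $K$ is a CM quadratic extension of a totally real field of degree $d$ and that $G\cong\Z_p^{d+1}$ (nor the section's standing assumption of good, and from Proposition \ref{thm:euler} onwards ordinary, reduction above $p$); if the control step were as soft as you assert, the statement would hold for an arbitrary $\Z_p^m$-extension of an arbitrary number field, which is not what is claimed. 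The paper closes exactly this gap by invoking the control theorem of Jha--Ochiai, \cite[Example 1.9 and the proof of Theorem 1]{jo}, which is tailored to this $\Z_p^{d+1}$-extension of a CM field and supplies the finiteness of the relevant local and global cohomology terms. Without that input, or an equivalent verification of those finiteness statements, your proof is incomplete at its decisive step.
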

\begin{proof}
As $G$ is commutative, it suffices to show $R(E/K_\infty)^G$ is finite. Finiteness of $R(E/K_\infty)^G$ follows from  \cite[Example 1.9 and the proof of Theorem 1]{jo}.
\end{proof}

\begin{remark}\label{hmfalse}
	The Euler characteristic of the fine Selmer group of elliptic curve over the false Tate-curve extension has been discussed in \cite{ha}. Assume that $R(E/F)$ is finite, then it follows  from \cite[Theorem 3.1]{ha}, that $\chi(G,R(E/\f))$ exists if and only if the kernel of the natural
	map $\phi_{F_\infty/F} $ (see \cite [equation 71]{cs}),
	\begin{small}
	\begin{equation}\label{equation2.1}
		H^1_{\mathrm{Iw}}(T_pE/ F_\infty)_G \overset{\phi_{F_\infty/F}} \lra H^1_{\mathrm{Iw}}(T_pE/F)=H^1(F_S/F,T_pE)
	\end{equation}
\end{small}
	is finite.  
	
	However, applying Nekovar's spectral sequence we actually get that \cite[Proposition 8.4.8.3]{ne} $\text{ker}\Big(H^1_{\mathrm{Iw}}(T_pE/ F_\infty)_G \lra H^1(F_S/F,T_pE)\Big)=H_2(G,H^2_{\mathrm{Iw}}(T_pE/ F_\infty))$. By Poitou-Tate exact sequence \eqref{eq5}, we have seen that the finiteness of\\ $H_2(G,H^2_{\mathrm{Iw}}(T_pE/ F_\infty))$ is equivalent to the finiteness of $H^2(G, R(E/F_\infty))$. 
	
	Thus, following the criterion of \cite[Theorem 3.1]{ha}, establishing the finiteness of  $H_2(G,H^2_{\mathrm{Iw}}(T_pE/ F_\infty))$ seems as difficult as showing the existence of $\chi(G, R(E/F_\infty))$.

\end{remark}

We now discuss a criterion for the existence of $\chi(G, R(E/F_\infty))$ in terms of the map  $\phi_{F_\infty/F_\cyc} : H^1_{\mathrm{Iw}}(T_pE/ \f)_H\lra H^1_{\mathrm{Iw}}(T_pE/ F_\cyc)$.
\begin{proposition}\label{thm2.12}
Assume that  $m_1=m_2=0$, $R(E/F)$ is finite and $S(E/\f)^\vee $ is a finitely generated $\Z_p[[H]]$ module. Further assume  that $\ran_{\Z_p}(\text{coker}(\phi_{F_\infty/F_\cyc}))$ =0. Then $\chi(G,R(E/F_{\infty}))$ exists.
\end{proposition}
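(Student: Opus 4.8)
The plan is to reduce the statement, via the criterion recorded in Remark \ref{hmfalse}, to a finiteness statement for $H_2\big(G,H^2_{\mathrm{Iw}}(T_pE/F_\infty)\big)$, and then to express this group in terms of $\mathrm{coker}(\phi_{F_\infty/F_\cyc})$ by an Iwasawa-descent argument. First, exactly as in the proof of Proposition \ref{prop2.2}: since $R(E/F)$ is finite, $R(E/F_\cyc)^\vee$ is $\Z_p[[\Gamma]]$-torsion, so $H^2(G_S(F_\cyc),E_{p^\infty})=0$, hence $H^2(G_S(F_\infty),E_{p^\infty})=0$ and, by \cite[Lemma 3.1]{cs}, $R(E/F_\infty)^\vee$ is a torsion $\Z_p[[G]]$-module; moreover $H^0(G,R(E/F_\infty))$ is finite by Proposition \ref{thm:H_0}. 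As $G$ has $p$-cohomological dimension $2$ and every $H^i(G,R(E/F_\infty))$ is cofinitely generated over $\Z_p$, Theorem \ref{theo:Ho} shows that $\chi(G,R(E/F_\infty))$ exists if and only if $H^2(G,R(E/F_\infty))$ is finite, and by the Poitou--Tate sequence \eqref{eq5} (as spelled out in Remark \ref{hmfalse}) this is equivalent to the finiteness of $H_2\big(G,H^2_{\mathrm{Iw}}(T_pE/F_\infty)\big)$.

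The heart of the argument is the descent step. Applying Nekovar's spectral sequence \cite[Proposition 8.4.8.3]{ne} to the $\Z_p$-extension $F_\infty/F_\cyc$ (this is the degree-one analogue of the isomorphism $H^2_{\mathrm{Iw}}(T_pE/F_\cyc)\cong H^2_{\mathrm{Iw}}(T_pE/F_\infty)_H$ used in the proof of Proposition \ref{prop2.2}) yields the exact sequence
$$0\longrightarrow H^1_{\mathrm{Iw}}(T_pE/F_\infty)_H\overset{\phi_{F_\infty/F_\cyc}}{\longrightarrow}H^1_{\mathrm{Iw}}(T_pE/F_\cyc)\longrightarrow H^2_{\mathrm{Iw}}(T_pE/F_\infty)^H\longrightarrow 0,$$
so that $\mathrm{coker}(\phi_{F_\infty/F_\cyc})\cong H^2_{\mathrm{Iw}}(T_pE/F_\infty)^H=H_1\big(H,H^2_{\mathrm{Iw}}(T_pE/F_\infty)\big)$. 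Since $H\cong\Z_p$ and $\Gamma\cong\Z_p$ both have $p$-cohomological dimension one, the only term of total degree two in the homological Hochschild--Serre spectral sequence for $1\to H\to G\to\Gamma\to 1$ is $H_1\big(\Gamma,H_1(H,-)\big)$, whence
$$H_2\big(G,H^2_{\mathrm{Iw}}(T_pE/F_\infty)\big)\;\cong\;H_1\Big(\Gamma,\,H_1\big(H,H^2_{\mathrm{Iw}}(T_pE/F_\infty)\big)\Big)\;\cong\;\big(\mathrm{coker}(\phi_{F_\infty/F_\cyc})\big)^{\Gamma}.$$

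It therefore suffices to show $\big(\mathrm{coker}(\phi_{F_\infty/F_\cyc})\big)^{\Gamma}$ is finite, and this is where the remaining hypotheses enter. By \cite[Theorem 3.1]{hv}, together with $m_1=m_2=0$ and the assumption that $S(E/F_\infty)^\vee$ is finitely generated over $\Z_p[[H]]$, the module $R(E/F_\infty)^\vee$ and the local terms $\bigoplus_{v\in S}K^0_v(E/F_\infty)^\vee$ are finitely generated over $\Z_p[[H]]$ (here $m_1=m_2=0$ is what controls the local invariants at the primes above $m$); hence, by \eqref{eq5}, $H^2_{\mathrm{Iw}}(T_pE/F_\infty)$ is finitely generated over $\Z_p[[H]]\cong\Z_p[[X]]$, so its $X$-torsion submodule $H^2_{\mathrm{Iw}}(T_pE/F_\infty)^H=\mathrm{coker}(\phi_{F_\infty/F_\cyc})$ is finitely generated over $\Z_p$. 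By the hypothesis $\ran_{\Z_p}\big(\mathrm{coker}(\phi_{F_\infty/F_\cyc})\big)=0$ it is then finite, a fortiori $\big(\mathrm{coker}(\phi_{F_\infty/F_\cyc})\big)^{\Gamma}$ is finite, and we are done. (Alternatively one may bypass the local analysis: $\mathrm{coker}(\phi_{F_\infty/F_\cyc})$ is in any case a quotient of the finitely generated $\Z_p[[\Gamma]]$-module $H^1_{\mathrm{Iw}}(T_pE/F_\cyc)$, and a finitely generated $\Z_p[[\Gamma]]$-module of zero $\Z_p$-rank is pseudo-isomorphic to a finite sum of modules $\Z_p[[\Gamma]]/(p^{m_i})$, each of which has trivial $\Gamma$-invariants.)

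I expect the main obstacle to be the descent step: setting up the base-change spectral sequence for Iwasawa cohomology along $F_\infty/F_\cyc$ precisely enough to obtain the displayed exact sequence (including that $\phi_{F_\infty/F_\cyc}$ is the relevant edge map), and then running the Hochschild--Serre bookkeeping so that $H_2(G,H^2_{\mathrm{Iw}}(T_pE/F_\infty))$ is identified with $\big(\mathrm{coker}(\phi_{F_\infty/F_\cyc})\big)^{\Gamma}$. The secondary technical point is the local computation at primes dividing $m$ that turns $m_1=m_2=0$ into the finite generation of $H^2_{\mathrm{Iw}}(T_pE/F_\infty)$ over $\Z_p[[H]]$.
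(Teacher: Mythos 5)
Your proof is correct, but it takes a genuinely different route from the paper's. The paper argues on the discrete side: by Lemma \ref{lem2.7} it suffices to show that $H^1(H,R(E/\f))$ has finite $\Gamma$-invariants, and this is extracted from the sequence \eqref{eq11} by a corank count, using $H^1(H,S(E/\f))=0$ and \cite[Theorem 3.1]{hv}, a control theorem for $R(E/F_\cyc)\to R(E/\f)^H$ (this is exactly where $m_1=m_2=0$ enters), and finally \cite[Theorem 4.11]{cs} together with $\ran_{\Z_p}(\coker(\phi_{F_\infty/F_\cyc}))=0$ to equate $\cora_{\Z_p}R(E/F_\cyc)$ with $\cora_{\Z_p[[H]]}R(E/\f)$. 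You work instead on the compact side: after the reductions of Proposition \ref{prop2.2} and Remark \ref{hmfalse} (note that the implication you need, finiteness of $H_2(G,H^2_{\mathrm{Iw}}(T_pE/\f))\Rightarrow$ finiteness of $H^2(G,R(E/\f))$, is the easy direction, since $H^3(G,-)=0$), you identify this homology group, via Nekov\'a\v{r} descent along $F_\infty/F_\cyc$ and Hochschild--Serre, with the $\Gamma$-invariants of $\coker(\phi_{F_\infty/F_\cyc})$ (strictly, of its twist by the character describing the $\Gamma$-action on $H$, which is harmless here), and then show that $\coker(\phi_{F_\infty/F_\cyc})\cong H^2_{\mathrm{Iw}}(T_pE/\f)^H$ is outright finite, being the $X$-torsion of a module finitely generated over $\Z_p[[H]]\cong\Z_p[[X]]$, hence finitely generated over $\Z_p$, of rank zero by hypothesis. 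Two small caveats: your appeal to $m_1=m_2=0$ and \cite[Theorem 3.1]{hv} in the finite-generation step is misattributed and in fact unnecessary --- $R(E/\f)^\vee$ is a quotient of the hypothetically $\Z_p[[H]]$-finitely generated module $S(E/\f)^\vee$, and the duals of the semilocal terms in \eqref{eq5} are finitely generated over $\Z_p[[H]]$ for every $v\in S$ (primes above $p$ or $m$ have open decomposition groups in $G$, while the remaining primes of $S$ split completely in $F_\infty/F_\cyc$ and contribute induced modules of finite $\Z_p[[H]]$-rank) --- so your route actually dispenses with the hypothesis $m_1=m_2=0$, which the paper's proof genuinely uses; and your parenthetical alternative is shakier, since $\coker(\phi_{F_\infty/F_\cyc})$ is not a priori $\Z_p[[\Gamma]]$-torsion or finitely generated over $\Z_p$, so ``zero $\Z_p$-rank'' needs the interpretation that your main line supplies. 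In exchange for the heavier descent machinery, your approach pinpoints the obstruction as $\coker(\phi_{F_\infty/F_\cyc})$ itself and gives a marginally stronger statement, whereas the paper's proof stays within the Selmer-theoretic corank bookkeeping already set up for Propositions \ref{lemma2.8} and \ref{thm:euler}.
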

\begin{proof}
	By Lemma \ref{lem2.7}, it suffices to show that $H^1(H, R(E/\f))=0$. Using \eqref{eq11}, it further reduces to show that $\cora_{\Z_p} (\text{coker}(f)^H)$=$r-s$, where $r=\corank_{\Z_p}S(E/\f)^H$ and  $s$=$\corank_{\Z_p}R(E/\f)^H$. 	As $H^1(H,S(E/\f))$=$0$ \cite[proof of Theorem 5.3]{hv}, we observe that  $r$=$\cora_{\Z_p[[H]]}S(E/\f)$ and it follows from \eqref{eq11} that $\cora_{\Z_p}(\text{coker}(f)^H)$=$\cora_{\Z_p[[H]]}\text{coker}(f)$.

	As $m_1$=$m_2$=$0$, using a control theorem for  $R(E/F_\cyc) \rightarrow R(E/F_\infty)^H$, we have  $\cora_{\Z_p}R(E/\f)^H$=$\cora_{\Z_p}R(E/F_\cyc)$. Finally,  we use the hypothesis $\ran_{\Z_p}\text{coker}(\phi_{F_\infty/F_\cyc})$=$0$ and deduce  from \cite[Theorem 4.11]{cs} that $\cora_{\Z_p}R(E/F_\cyc)$ =$\cora_{\Z_p[[H]]}R(E/\f)$. Therefore, $\corank_{\Z_p}(\text{coker}(f)^H)$=$r-s$. 
\end{proof}

\begin{remark}
	We keep the hypotheses and setting of Proposition \ref{thm2.12}. 
	Then from \cite[Theorem 4.11]{cs}, it follows that if we assume Conjecture \ref{conjb} holds, then the following conditions are equivalent: 
	\begin{enumerate}[(i)]
		\item $\ran_{\Z_p}(\text{coker}(\phi_{F_\infty/F_\cyc}))=0$.
		\item $R(E/F_\cyc)$ is finite.
	\end{enumerate}
\end{remark}

\begin{remark}\label{limfalse} 
In \cite{li}, the  Euler characteristic of the fine Selmer group of an abelian variety is discussed 
and contains the following corollary. 

\medskip

\noindent \begin{small}{\cite[{\bf Corollary 3.5}]{li} {\it 	Let $A$ be an abelian variety over $K$. Let $T$ be the Tate module of the dual of the abelian variety $A^*$ of A. Let $\K$ be a $p$-adic Lie extension of a number field $K$ with Galois group $G$. Suppose that there is a finite family of closed normal subgroups $G_i(0\leq i \leq r)$ of $G$ such that $1=G_0\subseteq G_1\subseteq\cdots \subseteq G_r=G$,  $G_i/G_{i-1}$ $\cong$ $\mathbb{Z}_p$, for every $i$. Then if $Y(T/K)$ is finite, the $G$-Euler characteristic of $Y(T/\K)$ is defined. }\qed }\end{small}

\medskip

\noindent The following result of Kato was used crucially in proving \cite[Corollary 3.5]{li}.

\medskip

\noindent \begin{small}{\cite[{\bf Proposition 2.3}]{li} (\cite[Proposition 4.2]{ka})  
	{\it Let $G$ be a compact pro-$p$ $p$-adic Lie group without $p$-torsion. Let $N$ be a closed
	normal subgroup of $G$ such that $G/N$ has no $p$-torsion. Suppose that there is a finite family of closed normal subgroups $N_i$ $(0\leq i \leq r)$ of $G$ such that $1=N_0\subseteq N_1\subseteq\cdots\subseteq N_r=N$,  $N_i/N_{i-1}$ $\cong$ $\mathbb{Z}_p$ for $1\leq i \leq r$ and such that the action of $G$ on $N_i/N_{i-1}$ by inner automorphisms is given by a homomorphism $\chi_i:G/N\longrightarrow \mathbb{Z}_p^{\times} $. Let $M$ be a finitely generated $R[[G]]$ module. Then the following statements are equivalent.
	\begin{enumerate}[i]
		\item . $M_N$ is a torsion $R[[G/N]]$ module.
		\item . $M$ is a torsion $R[[G]]$ module, and $H_n(N,M)$ is a torsion $R[[G/N]]$ module for every $n\geq 1$. \qed
	\end{enumerate}
}}\end{small}
Taking $G=N$ in \cite[Proposition 2.3]{li}, the author obtains \cite[Corollary 3.5]{li} via \cite[Theorem 3.4]{li}. 

However, the  assumption $G=N$ forces  the action of $G$ on $N_i/N_{i-1}$ by inner automorphism $\chi_i$ to be trivial for all $i$. Note that, this in particular implies that $G$ is abelian. Hence, \cite[Proposition 2.3]{li} do not apply for $G=N$ in the case when $G$ is not abelian, in particular for the false Tate curve extension case with $G\cong\Z_p\rtimes \Z_p$. Thus, the proof of \cite[Theorem 3.4]{li} and hence \cite[Corollary 3.5]{li} seems to have some gap 
 for the  false Tate curve extension.
\end{remark} 
\section{Fine Selmer group over function fields of characteristic $\ell \neq p$}\label{section4}
We choose and fix a rational prime $\ell $ distinct from $p$ and   take $r \in \mathbb N$ such that the finite  field $\FF =\FF_{\ell^r}$ contains $\mu_p$. Set  $K=\FF (t)$. Let $\FF^{(p)}$ be the unique (unramified) $\Z_p$ extension of $\FF$ contained in $\overline{\FF}_{\ell}$. Note that $\FF^{(p)}(t)= K(\mu_{p^\infty})$ and we denote $\FF^{(p)}(t)$ by $K_\cyc$. Further, we choose any non-constant polynomial $q(t)$ in $K$ and put $K_\infty:=\underset{n\geq 0}\cup \FF^{(p)}(t)(q(t)^{\frac{1}{p^n}})$. Then $G:=\gal(K_\infty/K)\cong \Z_p\rtimes \Z_p$ \cite[Remark 3.5]{w}, $H:=\gal(K_\infty/K_\cyc)\cong \Z_p$ and $\Gamma=G/H\cong\gal(K_\cyc/K)\cong \Z_p$. $K_\infty$ is an analogue of the false-Tate extension of number fields. Throughout section \ref{section4}, $E$ will be a non-isotrivial elliptic curve defined over $K$, i.e, $j(E) \notin \FF$ and S will be a finite set of primes of $K$ containing the primes of bad reduction for $E$ and the primes which ramify over $K_\infty$. In this section, we discuss $p^\infty$-fine Selmer groups of $E$ over $K_\infty$. 

For a finite extension $L/K$ and each $v \in S$, put $K^1_v(E/L):= \underset{w\mid v}\prod H^1(L_w,E_{p^\infty})$.  We define the $p^\infty$-fine Selmer group $R(E/L) $ of $E$  over $L$ as:
\begin{small}{
\begin{equation}\label{def3.1}
	R(E/L) :=\ker(H^1(G_S(L),E_{p^\infty})\lra \underset{v\in S}\bigoplus K^1_v(E/L)).
\end{equation}}
\end{small}
As before, the definition of $R(E/\mathcal L)$ extends to  an infinite extension $\mathcal L/K$ by taking { the} inductive limit over finite subextensions. 
\begin{remark}\label{rem5.1}
	Recall that the classical $p^\infty$-Selmer group of $E/L$ is defined as:
	\begin{small}
	\[
	S(E/L):=\Ker(H^1(G_S(L),E_{p^{\infty}})\longrightarrow \underset{w\mid S}\oplus H^1(L_w,E_{p^\infty})/\text{im}(\kappa_w) ,		  
	\]
	\end{small}
	where $\kappa_w:E(L_w)\otimes \Q_p/\Z_p\inj H^1(L_w, E_{p^\infty}) $ is the local Kummer map. Moreover, in this function field setting  with ${\ell}\neq p$, $\text{im}(\kappa_w)=0$ \cite[Prop.  3.3]{bl1}.
	
	Thus, we see that the classical Selmer group $S(E/L)$ is the smallest possible Selmer group and in fact we have  $S(E/L)=R(E/L)$.
	
	Note that the definition of $R(E/L)=S(E/L)$ is independent of the choice of $S$ as long as $S$ contains all the primes of bad reduction of $E/L$ \cite[Page 119]{va}.
\end{remark}	
\begin{remark}
We will discuss the Euler characteristic  $\chi(G,R(E/K_\infty))$ over the  extension $K_\infty/K$. In fact, as explained in Remark  \ref{rem1.11}, we will prove the existence of $\chi(G,R(E/K_\infty))$ using \cite[Theorem 4]{j}. 
Note that, for the cyclotomic case, the existence of $\chi(\Gamma, S(E/K_\cyc))$ is discussed under a variety of hypotheses in \cite{pa} (see \cite[Theorems 3.7 and  3.9]{pa}).
\end{remark}

 Let $\chi_p$
be the $p$-adic cyclotomic character.  
Given a $G_{K}$ module $M$,  we denote by $M(k)$ the twist of $M$ by $\chi_p^k$. 
Define $R(E^k/K)$ by replacing $E_{p^\infty}$ by  
$E_{p^{\infty}}(k)$ in the equation \eqref{def3.1}.
We begin with  the following lemma.
\begin{lemma}\label{lemma3.4}
	Let $K_\infty/K$ be the extension as defined above in \S \ref{section4}. Let $E/K$ be a non-isotrivial elliptic curve with  good or split multiplicative reductions at all primes of $K$.  If $R(E/K)$ is finite, then $H^0(G, R(E/K_\infty))$ is finite.
\end{lemma}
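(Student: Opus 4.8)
The plan is to mimic the strategy used for Proposition~\ref{thm:H_0} and Proposition~\ref{prop2.2} in the number field setting, transporting the argument to the function field of characteristic $\ell\neq p$. First I would set up the fundamental commutative diagram coming from the restriction maps: the top row is the defining sequence of $R(E/K_\infty)^G$ inside $H^1(G_S(K_\infty),E_{p^\infty})^G$ mapping to $\bigoplus_{v\in S}K^1_v(E/K_\infty)^G$, and the bottom row is the corresponding sequence for $R(E/K)$ inside $H^1(G_S(K),E_{p^\infty})$. Vertical maps $\alpha$ (on fine Selmer), $\beta$ (on global cohomology), and $\bigoplus\gamma_v$ (on local cohomology) are induced by restriction. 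Since $R(E/K)$ is finite by hypothesis, to conclude that $H^0(G,R(E/K_\infty))=R(E/K_\infty)^G$ is finite it suffices, by the snake lemma, to show $\ker(\alpha)$ and $\operatorname{coker}(\alpha)$ are finite, which in turn follows once we show $\ker(\beta)$, $\operatorname{coker}(\beta)$, and $\ker(\bigoplus\gamma_v)$ are all finite.

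\textbf{Control of the global map $\beta$.} By the inflation--restriction exact sequence, $\ker(\beta)\cong H^1(G,E_{p^\infty}(K_\infty))$ and $\operatorname{coker}(\beta)$ injects into $H^2(G,E_{p^\infty}(K_\infty))$. Since $E$ is non-isotrivial, $E(K_\infty)_{p^\infty}$ is finite (the $p$-primary torsion of $E$ over a finitely generated field of transcendence degree $1$ over a finite field is finite, and $K_\infty/K$ does not change this because $E$ is non-isotrivial so the image of Galois on the Tate module is open), hence $H^i(G,E_{p^\infty}(K_\infty))$ is finite for all $i$ as $G$ is a compact $p$-adic Lie group of finite cohomological dimension acting on a finite module. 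This gives the finiteness of $\ker(\beta)$ and $\operatorname{coker}(\beta)$.

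\textbf{Control of the local maps $\gamma_v$.} For each $v\in S$ and each place $w$ of $K_\infty$ above $v$, one again uses inflation--restriction: $\ker(\gamma_v)$ is controlled by $H^1(G_w,E_{p^\infty}(K_{\infty,w}))$ where $G_w$ is the decomposition subgroup. Since $E(K_{\infty,w})_{p^\infty}$ is finite, this local cohomology group is finite, so $\ker(\gamma_v)$ is finite for every $v\in S$, and hence $\bigoplus_{v\in S}\ker(\gamma_v)$ is finite as $S$ is finite. Assembling these, the snake lemma applied to the commutative diagram gives that $\ker(\alpha)$ and $\operatorname{coker}(\alpha)$ are finite; combined with the finiteness of $R(E/K)$ this yields the finiteness of $R(E/K_\infty)^G=H^0(G,R(E/K_\infty))$.

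The main obstacle I anticipate is establishing cleanly that $E(K_\infty)_{p^\infty}$ (and the various local analogues $E(K_{\infty,w})_{p^\infty}$) are finite; this is where the hypotheses ``non-isotrivial'' and ``good or split multiplicative reduction'' enter. For non-isotrivial $E$, the image of $G_K$ in $\operatorname{Aut}(T_pE)\cong\operatorname{GL}_2(\Z_p)$ is open (by the function field analogue of Serre's theorem, cf.\ results of Igusa), so $E(K_\infty)_{p^\infty}$ is finite provided $K_\infty/K$ has Galois group with bounded image interacting suitably with the mod-$p$ representation --- here $G\cong\Z_p\rtimes\Z_p$ is small enough that this causes no problem. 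The split multiplicative reduction hypothesis is used so that at the bad primes the local Tate parametrization keeps $E(K_{\infty,w})_{p^\infty}$ finite (avoiding the case of additive or non-split reduction where extra torsion could appear in a ramified $p$-extension). Once these finiteness facts are in hand, the diagram chase is entirely routine.
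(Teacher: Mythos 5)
Your overall strategy is the same as the paper's: a control theorem proved by the snake lemma applied to the restriction diagram, reducing everything to finiteness of $H^i(G,E_{p^\infty}(K_\infty))$ for $i=1,2$ and of the local kernels $\ker(\gamma_v)\cong H^1(G_v,E_{p^\infty}(K_{\infty,w}))$. The global half of your argument is essentially fine (finiteness of $E_{p^\infty}(K_\infty)$ for non-isotrivial $E$ is exactly what is used, via the results of Bandini--Longhi cited in the paper).

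The gap is in the local half. You justify the finiteness of $H^1(G_v,E_{p^\infty}(K_{\infty,w}))$ by asserting that $E_{p^\infty}(K_{\infty,w})$ is finite, and you even say that the split multiplicative hypothesis ``keeps'' this torsion finite. That premise is false, and at the split multiplicative primes it is exactly backwards: in this setting $K_\cyc=K(\mu_{p^\infty})$ (char $\ell\neq p$, constant-field $\Z_p$-extension), so $\mu_{p^\infty}\subset K_{\cyc,u}\subset K_{\infty,w}$, and the Tate parametrization $E(K_{\cyc,u})\cong K_{\cyc,u}^\times/q_E^{\Z}$ shows $\mu_{p^\infty}\inj E_{p^\infty}(K_{\infty,w})$, which is therefore infinite; likewise at ramified good primes the residue field of $K_{\cyc,u}$ is infinite and the $p$-primary torsion need not be finite. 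What is true, and what the argument must run through, is only that $E_{p^\infty}(K_v)$ is finite over the base local field. One then gets finiteness of $\ker(\gamma_v)$ not directly, but by a Hochschild--Serre argument for $H_u=\gal(K_{\infty,w}/K_{\cyc,u})$ inside $G_v$, together with the computation (Bandini--Longhi) that $H^1(H_u,E_{p^\infty}(K_{\infty,w}))$ is $0$ at unramified primes, $E_{p^\infty}(K_{\cyc,u})$ at ramified good primes, and $\mu_{p^\infty}$ at ramified bad primes: taking $\Gamma_v$-invariants of these (co)finitely generated $\Z_p$-modules reduces the finiteness to that of $E_{p^\infty}(K_v)$. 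This is precisely the route the paper takes (quoting \cite[Corollary 4.9, Lemmas 3.3 and 4.2]{bl2}); as written, your local step would fail at every bad prime of $S$ and needs to be replaced by this cohomological computation.
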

\begin{proof}
	Note that it is enough to establish  a control theorem by showing that the kernel and the  cokernel of the natural map $R(E/K)\lra R(E/K_{\infty})^{G}$ are finite. By the snake lemma, it suffices to show that $H^i(G, E_{p^\infty}(K_\infty)) $ is finite  for $i=1,2$ and $H^1(G_v, E_{p^\infty}(K_{\infty,w}))$ is finite for every place $w$ of $K$ with $w\mid v$, $v\in S$.
	The finiteness of these local and global terms  follows from \cite[Corollary 4.9, Lemmas 3.3 and 4.2]{bl2}.
\end{proof}
\begin{proposition}\label{proposition3.5}
	Let us keep the setting and hypotheses of Lemma \ref{lemma3.4}. If $R(E^k/K)$ is finite for every $k \geq 1$, then $\chi\big(G, R(E/K_\infty)\big)$ is finite. 
	
\end{proposition}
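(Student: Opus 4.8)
The plan is to transplant the proof of Proposition \ref{prop2.2} to the function field setting, with Lemma \ref{lemma3.4} in the role of the number field control theorem. Since $R(E/K)$ is finite, Lemma \ref{lemma3.4} already gives that $H^0(G,R(E/K_\infty))=R(E/K_\infty)^G$ is finite. A control theorem for $K_\cyc/K$, proved exactly as Lemma \ref{lemma3.4} (using that $E_{p^\infty}(K_\cyc)$ is finite since $E$ is non-isotrivial), shows that $R(E/K_\cyc)^\vee$ is a torsion $\Z_p[[\Gamma]]$-module; as in Proposition \ref{prop2.2} this forces $H^2(G_S(K_\cyc),E_{p^\infty})=0$, hence also $H^2(G_S(K_\infty),E_{p^\infty})=0$, and the analogue of \cite[Lemma 3.1]{cs} then shows that $R(E/K_\infty)^\vee$ is a torsion $\Z_p[[G]]$-module. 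Since $G\cong\Z_p\rtimes\Z_p$ is pro-$p$ of $p$-cohomological dimension $2$, Theorem \ref{theo:Ho} reduces the existence of $\chi(G,R(E/K_\infty))$ to the finiteness of $H^2(G,R(E/K_\infty))$.

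To bound $H^2(G,R(E/K_\infty))$ I would pass to the second Iwasawa cohomology group through the function field analogue of the Poitou--Tate sequence \eqref{eq5} (legitimate in characteristic $\ell\neq p$, and recall $R(E/L)=S(E/L)$ by Remark \ref{rem5.1}): from it, $\chi(G,R(E/K_\infty))$ exists iff $\chi\big(G,H^2_{\mathrm{Iw}}(T_pE/K_\infty)^\vee\big)$ exists, and $H^2(G,R(E/K_\infty))$ is finite iff $H^2\big(G,H^2_{\mathrm{Iw}}(T_pE/K_\infty)^\vee\big)$ is. The Hochschild--Serre spectral sequence for $1\to H\to G\to\Gamma\to 1$ gives $H^2\big(G,H^2_{\mathrm{Iw}}(T_pE/K_\infty)^\vee\big)\cong H^1\big(\Gamma,H^1(H,H^2_{\mathrm{Iw}}(T_pE/K_\infty)^\vee)\big)$, which is finite (as $\Gamma\cong\Z_p$) iff $H^0\big(\Gamma,H^1(H,H^2_{\mathrm{Iw}}(T_pE/K_\infty)^\vee)\big)$ is finite. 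Then \cite[Proposition 4.2]{ka} supplies a finite filtration of $H^1(H,H^2_{\mathrm{Iw}}(T_pE/K_\infty)^\vee)$ with graded pieces $T(\chi^{s})$, $s\in\N$, where $T$ is a $\Z_p[[\Gamma]]$-subquotient of $H^0(H,H^2_{\mathrm{Iw}}(T_pE/K_\infty)^\vee)$, and the argument with Nekov\'a\v{r}'s spectral sequence \cite[Proposition 8.4.8.3]{ne} (as on \cite[Page 562]{ka}) identifies $H^0(H,H^2_{\mathrm{Iw}}(T_pE/K_\infty)^\vee)\cong H^2_{\mathrm{Iw}}(T_pE/K_\cyc)^\vee$. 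Since $R(E/K_\cyc)^\vee$ is $\Z_p[[\Gamma]]$-torsion, so is $H^2_{\mathrm{Iw}}(T_pE/K_\cyc)$, hence each $T(k)$ is; by the structure theorem for torsion $\Z_p[[\Gamma]]$-modules it therefore suffices to prove, for every $k\in\N$, that $\gamma-1$ does not divide the characteristic ideal of $H^2_{\mathrm{Iw}}(T_pE/K_\cyc)(k)$, i.e.\ that $H^0\big(\Gamma,(H^2_{\mathrm{Iw}}(T_pE/K_\cyc)(k))^\vee\big)$ is finite.

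The last step translates this back to fine Selmer groups. By the analogue of \eqref{eq5} over $K_\cyc$, finiteness of $H^0\big(\Gamma,(H^2_{\mathrm{Iw}}(T_pE/K_\cyc)(k))^\vee\big)$ is equivalent to finiteness of $H^0(\Gamma,R(E/K_\cyc)(k))$, and since $\chi$ is trivial on $G_{K_\cyc}$ we have $H^0(\Gamma,R(E/K_\cyc)(k))\cong H^0(\Gamma,R(E^k/K_\cyc))$. A control theorem for $R(E^k/-)$ along $K_\cyc/K$ — the twist-by-$\chi^k$ analogue of Lemma \ref{lemma3.4}, which goes through verbatim because $E_{p^\infty}(k)$ is isomorphic to $E_{p^\infty}$ over $K_\cyc=K(\mu_{p^\infty})$ and the finiteness inputs \cite[Corollary 4.9, Lemmas 3.3 and 4.2]{bl2} are insensitive to the twist — shows that $H^0(\Gamma,R(E^k/K_\cyc))$ is finite iff $R(E^k/K)$ is finite. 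The latter holds for every $k\geq1$ by hypothesis, which completes the proof.

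I expect the work to be in bookkeeping rather than in any new idea: the main point to check is that all the Iwasawa-theoretic ingredients of the number field proof — the Poitou--Tate / Iwasawa cohomology sequences \eqref{eq5}--\eqref{eq6}, the descent lemma \cite[Lemma 3.1]{cs}, the weak-Leopoldt-type vanishing $H^2(G_S(K_\cyc),E_{p^\infty})=0$, Kato's \cite[Proposition 4.2]{ka}, and Nekov\'a\v{r}'s spectral sequence — hold over a global function field of characteristic $\ell\neq p$, and that the control theorem of Lemma \ref{lemma3.4} extends uniformly to the twisted modules $E_{p^\infty}(k)$. Because $p\neq\ell$ and $K$ has no archimedean places, the cohomological-dimension bounds are no worse than in the number field case, so these adaptations should present no serious difficulty.
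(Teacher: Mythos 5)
Your proposal is correct and follows essentially the same route as the paper's proof: the control theorem of Lemma \ref{lemma3.4} for $H^0$, Theorem \ref{theo:Ho} to reduce to $H^2(G,R(E/K_\infty))$, the function-field version of \eqref{eq5} to pass to $H^2_{\mathrm{Iw}}(T_pE/K_\infty)^\vee$, Kato's filtration together with Nekov\'a\v{r}'s spectral sequence exactly as in Proposition \ref{prop2.2}, and a cyclotomic control theorem (the paper cites one similar to \cite[Theorem 3.9]{pa}) to reduce to the finiteness of $R(E^k/K)$. The extra step establishing $\Z_p[[G]]$-torsionness via $H^2(G_S(K_\infty),E_{p^\infty})=0$ is harmless but not needed, since finiteness of $H^0$ and $H^2$ already forces finiteness of $H^1$ by Theorem \ref{theo:Ho}.
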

\begin{proof}
	By  Lemma \ref{lemma3.4}, we deduce that $H^0(G, R(E/K_\infty))$ is finite. Hence, to show that   $\chi(G, R(E/K_\infty))$ exists, it suffices to show that  $H^2(G, R(E/K_\infty))$	is finite (Theorem \ref{theo:Ho}). Note that the equation \eqref{eq5} continue to hold in this function field setting ($\ell \neq p$) for the extension $K_\infty$. Then using  \cite[Lemmas 4.2 and 3.3]{bl2} in \eqref{eq5}, we obtain that $H^2(G,R(E/K_\infty))$ is finite if and only if  $H^2(G, H^2_{\mathrm{Iw}}(T_pE/ K_\infty)^\vee) $ is finite. Now, using the techniques of Kato \cite[Theorem 5.1]{ka} as in the proof of Proposition \ref{prop2.2}, it is enough to show that $H^0(\Gamma, R(E/K_\cyc)^\vee(k))\cong H^0(\Gamma,R(E^k/K_\cyc)) $ is finite for every $k\geq 1$.  By a control theorem similar to \cite[Theorem 3.9]{pa},  the finiteness of $H^0(\Gamma,R(E^k/K_\cyc))$ is equivalent to the finiteness of $R(E^k/K)$. 
\end{proof}
\begin{theorem}\label{theo:Janfunction}
	Let $K_\infty/K$ be the false Tate curve extension as described above. Let $E/K$ be a non-isotrivial elliptic curve with  good or split multiplicative reductions at all primes of $K$.  If $R(E/K)$ is finite, then $\chi\big(G, R(E/K_\infty)\big)$ is finite.
\end{theorem}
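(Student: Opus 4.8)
The plan is to combine the previous two results in this section with Jannsen's theorem over function fields of characteristic $\ell\neq p$. By Proposition \ref{proposition3.5}, it suffices to show that $R(E^k/K)$ is finite for every $k\geq 1$. This is precisely the kind of statement that Jannsen's Conjecture \ref{ja} — proved unconditionally over function fields of characteristic $\ell\neq p$ in \cite[Theorem 4]{j} — delivers, exactly as in the proof of Theorem \ref{mainnumber} over number fields. So the argument is essentially a transcription of the proof of Theorem \ref{mainnumber} into the function field setting, once one checks that the inputs are available there.

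First I would fix $k\geq 1$ and consider the $p$-adic representation $V_pE(k)$ of $G_S(K)$, together with $W=(V_pE(k))^\ast(1)$. The representation $V_pE$ is pure of weight $-1$ (it is the $p$-adic Tate module of a non-isotrivial elliptic curve, hence comes from the geometry of $G_S(K)$ and has the expected weight), so $V_pE(k)$ is pure of weight $-1-2k$; since $k\geq 1$, this weight is $\neq -1$. Therefore \cite[Theorem 4]{j} applies and gives that the localization map
\begin{small}
\begin{equation*}
H^1(G_S(K),V_pE(k))\lra \underset{v\in S}\prod H^1(G_v,V_pE(k))
\end{equation*}
\end{small}
is injective. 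Exactly as in the proof of Theorem \ref{mainnumber}, the kernel of the corresponding map on $E_{p^\infty}(k)$-coefficients, namely $R(E^k/K)$, then surjects onto the divisible part of $R(E^k/K)$ with the divisible part controlled by $\ker$ of the $V_pE(k)$-map; hence $R(E^k/K)$ is finite. (One should double-check the weight normalization of $V_pE$ versus the convention in \cite{j}, but this only affects whether one twists by $V_pE(k)$ or its dual, and in either case the relevant weight is $\neq -1$.)

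Having established that $R(E^k/K)$ is finite for all $k\geq 1$, I would invoke Proposition \ref{proposition3.5} directly: its hypotheses are that $E/K$ is non-isotrivial with good or split multiplicative reduction at all primes, that $R(E/K)$ is finite (the standing hypothesis here, and also the $k=0$ case, which is not needed but is consistent), and that $R(E^k/K)$ is finite for every $k\geq 1$. All of these now hold, so $\chi(G,R(E/K_\infty))$ is finite, completing the proof.

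The main obstacle I anticipate is not in the logical skeleton — which is a clean reduction via Proposition \ref{proposition3.5} plus Jannsen's theorem — but in verifying the weight/purity bookkeeping in the function field setting: one must be sure that $V_pE$ for a non-isotrivial $E/K$ is genuinely "of geometric origin" and pure in the sense required by \cite[Theorem 4]{j}, and that the twist $V_pE(k)$ (or its Tate dual, depending on normalization) lands in the weight range $w\neq -1$ for all $k\geq 1$. Once that is pinned down, the control-theorem and Euler-characteristic machinery has already been set up in Lemma \ref{lemma3.4} and Proposition \ref{proposition3.5}, so nothing further is needed.
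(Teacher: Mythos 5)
Your proposal is correct and follows essentially the same route as the paper: reduce via Proposition \ref{proposition3.5} to the finiteness of $R(E^k/K)$ for all $k\geq 1$, then obtain that finiteness from the injectivity of the localization map on $V_pE(k)$ given by \cite[Theorem 4]{j}, noting the pure weight $-2k-1\neq -1$, and using that $\ker(\phi_k)$ surjects onto the divisible part of $R(E^k/K)$. The weight bookkeeping you flag is exactly what the paper records (weight $-2k-1$, citing \cite{be} and \cite{j}), so no further work is needed.
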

\begin{proof}
	By Proposition \ref{proposition3.5}, it suffices to show that $R(E^k/K)$ is finite for every $k \geq 1$. For each $k$, set $V_k=V_pE(k)$. Then the pure weight of $V_k$ { is equal to}  $-2k-1$ (see \cite[Page-8]{be} and \cite[Page-356]{j}). From \cite[Theorem 4]{j},  for each $k\geq 1$, the map \begin{small}{$\phi_k:H^1(G_S(K),V_k)\lra \underset{v\in S}\prod H^1(K_v, V_k)$}\end{small} is injective.  As the kernel of $\phi_k$ surjects onto the divisible part of $R(E^k/K)$, it follows that $R(E^k/K)$ is finite.
\end{proof}

\begin{remark} {\bfseries{Comparison of Theorem \ref{theo:Janfunction} with the work of \cite{va}.}}
	The existence of $\chi(G, R(E/\mathcal L))$ for some compact p-adic Lie extension $\mathcal L$ of $K$ has been proved in \cite{va}, under {a} certain set of hypotheses. In particular, for the false Tate curve extension
	$K_\infty/K$, it is shown  in \cite[Theorem 1.2]{va}  that $ { \chi(G, R(E/K_\infty))} (=\chi(G, S(E/K_\infty)))$ exists under  the following assumptions: 
	
	\begin{enumerate}[(i)]
		\item $E$ has either good or split multiplicative reduction at every prime of $K$,
		\item $R(E/K)$ is finite,
		\item $H^2(G_S(K_\infty), E_{p^\infty})=0$,
		\item $\chi( G, E_{p^\infty}(K_\infty ))$ exists,
		\item for every $v\mid S$, $\chi( G_v, E_{p^\infty}(K_{\infty,v} ))$ exists, where $ G_v$ is the decomposition subgroup of $G$ at $v$, and
		\item the map $H^1(G_S(K_\infty), E_{p^\infty})\lra \underset{v\mid S}\oplus H^1( K_{\infty, v}, E_{p^\infty}) $ is surjective.
	\end{enumerate}
	
	In Theorem \ref{theo:Janfunction}, under the assumptions (i), (ii) and $j(E)\notin \FF$, we prove the existence of $\chi(G, R(E/K_\infty)) $.
	Our proof is different from the proof of \cite[Theorem 1.2]{va} and uses the results of Kato \cite{ka} and Jannsen \cite{j}. Moreover, we do not  need the assumptions (iii)-(vi) in our proof. 
	
\end{remark}

\subsection{Analogue of Conjecture A}
 In this function field setting ($\ell \neq p$), the  analogue of Conjecture \ref{conja}, i.e. that $R(E/K_\cyc)^\vee$ is a finitely generated $\Z_p$ module,  is known and we briefly explain it here. Note that for an elliptic curve $E/K$,  $R(E/K_\cyc)^\vee$ is a finitely generated torsion $\Z_p[[\Gamma]]$ module \cite[Theorem 4.24]{w}.

Let $K$ be  a function field of transcendence degree 1 over a finite field of characteristic ${\ell}$ and  let $U$ be an open dense subset of $C$, the proper smooth curve with function field $K$. For any algebraic extension $L/K$, Witte \cite{w} defined the $U$-Selmer group, $Sel_{U}(L,E_{p^\infty}) $  as follows: 
\begin{small}
\begin{equation}
	Sel_{U}(L,E_{p^\infty}):=\ker(H^1(L,E_{p^\infty})\lra \underset{v\in U^0_L}\oplus  K^1_v(E/L)), 
\end{equation}
\end{small}
where $v\in U^0_L $ varies over the closed points of normalisation of $U$ in $L$. Further  the Iwasawa $\mu$-invariant of $Sel_{U}(K_\cyc,E_{p^\infty})$  vanishes \cite[Corollary 4.38]{w}. Since  $R(E/K_\cyc)\subset Sel_{U}(K_\cyc,E_{p^\infty})$,  Conjecture  A is true for $R(E/K_\cyc)^\vee$.

\begin{remark}\label{mu0lneqp}
	Alternatively, {the fact that}  $R(E/K_\cyc)^\vee$ is a finitely generated $\Z_p$ module can be shown using the relation between the ideal class group and $R(E/K_\cyc)$. Let $E/K$ be a non-isotrivial elliptic curve with good or split multiplicative reduction at all places of $K$.  Let $L=K(E[p])$ and let $L\subset L_n\subset L_\cyc$ be such that $[L_n:L]=p^n$.  Let $Cl(F)$ denote the divisor class group of $F$, where $F$ is a finite extension of $K$. Then, using a proof similar to \cite[Theorem 5.5]{lm} and \cite[Proposition 11.16]{ro}, we see that  $R(E/L_\cyc)^\vee$ is a finitely generated $\Z_p$ module and consequently the  same holds for $R(E/K_\cyc)^\vee$ as well. 
\end{remark}
\subsection{Analogue of Conjecture B}
\noindent
{{ We claim}} that the kernel and the cokernel of the map $R(E/K^{\cyc})\rightarrow R(E/K_{\infty})^{H}$ are cofinitely generated $\Z_p$ modules.

{ Observe that we have defined the fine Selmer groups $R(E/-)$  by using Galois cohomology  in this setting of function fields of characteristic $\ell\neq p$. Therefore, we can use arguments similar to the proof of \cite[Lemma 3.2]{cs} to deduce this claim.
More precisely, consider the following commutative diagram:
\begin{equation} \label{diag:controlcyctofalsetate}
		\begin{tikzcd}
			0\arrow{r}
			&R(E/K_{\infty})^{H}\arrow{r}
			&H^1(G_S(K_{\infty}), E_{p^\infty})^H\arrow{r}
			&(\underset{v\in S}\oplus K^1_v(E/K_{\infty}))^{H}
			\\
			0\arrow{r}
			&R(E/K_\cyc)\arrow{r}\arrow{u}{\alpha}
			&H^1(G_S(K_\cyc), E_{p^\infty}) \arrow{r}\arrow{u}{\beta}
			&\underset{v\in S}\oplus K^1_v(E/K_\cyc).\arrow{u}{\gamma=\underset{v\in S}\oplus \gamma_v}
		\end{tikzcd}
	\end{equation}

As $H\cong \Z_p$ has $p$-cohomological dimension  $1$, $\coker{\beta}=0$. 
Applying a snake lemma to the diagram \ref{diag:controlcyctofalsetate}, it suffices to show  that $\ker(\beta)$ and $\ker(\gamma)$ are cofinitely generated $\Z_p$ modules to establish the claim.
Note that $ E_{p^\infty}(K_\infty)$ is a cofinitely generated $\Z_p$-module and we have $H\cong \Z_p$. Hence, 
 $\ker(\beta)\cong H^1(H, E_{p^\infty}(K_\infty))$ is a cofinitely generated $\Z_p$-module.  
For each prime $v\in S$, fix a prime $w$ in $K_\infty$ and choose $w_c$ in $K_\cyc$ such that $w\mid w_c\mid v$. Let $H_{w_c}$ be the decomposition subgroup of $H$ corresponding to the primes $w\mid w_c$.
Then by Shapiro's lemma (see \cite[Equation 81, Page-835]{cs}), we get that $\ker(\gamma)=\underset{v\in S}\oplus \ker(\gamma_v)$ with
$\ker(\gamma_v)\cong \underset{w_c\mid v}\oplus H^1(H_{w_c}, E_{p^\infty}(K_{\infty,w}))$.
Again, each of $H^1(H_{w_c}, E_{p^\infty}(K_{\infty,w}))$ is a cofinitely generated $\Z_p$ module.
Since there are finitely many primes in $K_\cyc$ lying over a prime $v\in S$ and $S$ is finite, we deduce that $\ker(\gamma)$ is a cofinitely generated $\Z_p$ module.
}

{ As}  $R(E/K_\cyc)$ is a finitely generated $\Z_p$ module, we arrive at the following:

\begin{proposition}\label{prop5.3}
	Let $K_\infty/K$ be as  above with $\gal(K_\infty/K)  \cong \Z_p\rtimes \Z_p$. Let $E/K$  be such that $j(E)\notin \FF$ and it has good or split multiplicative reductions at all primes of $K$. Then $R(E/K_\infty)^\vee$ is a finitely generated  { $\Z_p[[H]]$} module and in particular, it is  $\Z_p[[G]]$ torsion. 
\end{proposition}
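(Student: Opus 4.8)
The plan is to deduce the result from the already established finite generation of $R(E/K_\cyc)^\vee$ over $\Z_p$ together with a control theorem for the extension $K_\infty/K_\cyc$, followed by an application of Nakayama's lemma over $\Z_p[[H]]$.

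First, recall that the analogue of Conjecture \ref{conja} holds in this setting: $R(E/K_\cyc)^\vee$ is a finitely generated $\Z_p$-module, either via Witte's vanishing of the $\mu$-invariant of $Sel_U(K_\cyc,E_{p^\infty})$ (\cite[Corollary 4.38]{w}), or via the comparison with the divisor class group in Remark \ref{mu0lneqp}. Next I would record the control theorem: the natural restriction map $R(E/K_\cyc)\longrightarrow R(E/K_\infty)^H$ has kernel and cokernel which are cofinitely generated $\Z_p$-modules. Applying the snake lemma to the commutative diagram comparing the defining sequences \eqref{def3.1} of $R(E/K_\cyc)$ and $R(E/K_\infty)$, the kernel is a subquotient of $H^1(H,E_{p^\infty}(K_\infty))$ and the cokernel embeds into $\bigoplus_{v\in S}H^1(H_w,E_{p^\infty}(K_{\infty,w}))$, where for each $v$ one fixes a prime $w$ of $K_\infty$ above it and $H_w\subseteq H$ denotes its decomposition subgroup. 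Since $E$ is non-isotrivial, $E_{p^\infty}(K_\infty)$ is finite, so the kernel is finite because $H\cong\Z_p$ has $p$-cohomological dimension $1$; and for each of the finitely many $v\in S$ the group $E_{p^\infty}(K_{\infty,w})$ is either finite or, at ramified primes of bad reduction, contains $\mu_{p^\infty}$ with finite index, so $H^1(H_w,E_{p^\infty}(K_{\infty,w}))$ is a cofinitely generated $\Z_p$-module of corank at most $1$. These are exactly the finiteness inputs used for Lemma \ref{lemma3.4}, supplied by \cite[Lemmas 3.3, 4.2 and Corollary 4.9]{bl2}; this control statement is the one step that deserves a careful check, although it parallels the argument already given there.

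Combining the two facts, $R(E/K_\infty)^H$ sits in two short exact sequences whose remaining terms are cofinitely generated over $\Z_p$, hence it is itself a cofinitely generated $\Z_p$-module; dualizing, $\big(R(E/K_\infty)^\vee\big)_H\cong\big(R(E/K_\infty)^H\big)^\vee$ is a finitely generated $\Z_p$-module. Identifying $\Z_p[[H]]\cong\Z_p[[X]]$ with $X$ a generator of the augmentation ideal $I_H$, we have $\big(R(E/K_\infty)^\vee\big)_H = R(E/K_\infty)^\vee\big/X\,R(E/K_\infty)^\vee$, so $R(E/K_\infty)^\vee\big/\mathfrak m\,R(E/K_\infty)^\vee$ is finite, where $\mathfrak m=(p,X)$. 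Since $R(E/K_\infty)^\vee$ is a compact (profinite) $\Z_p[[H]]$-module, the topological Nakayama lemma gives that it is a finitely generated $\Z_p[[H]]$-module.

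Finally, a module finitely generated over $\Z_p[[H]]$ is a fortiori finitely generated over $\Z_p[[G]]$. Because $\mu_p\subset K$, the action of $\Gamma=G/H$ on $H$ is through $1+p\Z_p$, so $G$ is a torsion-free pro-$p$ group and $\Z_p[[G]]$ is a Noetherian domain; moreover $[G:H]$ is infinite, so $\Z_p[[G]]$ is not finitely generated over $\Z_p[[H]]$ (indeed $\Z_p[[G]]/I_H\Z_p[[G]]\cong\Z_p[[\Gamma]]$ is not finitely generated over $\Z_p[[H]]/I_H=\Z_p$). Were $R(E/K_\infty)^\vee$ not $\Z_p[[G]]$-torsion, it would contain a copy of $\Z_p[[G]]$ (as $\Z_p[[G]]$ is an Ore domain), which, being a submodule of a module Noetherian over $\Z_p[[H]]$, would be finitely generated over $\Z_p[[H]]$ --- a contradiction. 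Hence $R(E/K_\infty)^\vee$ is $\Z_p[[G]]$-torsion, completing the proof. No single step is a serious obstacle; the only place needing genuine verification is the control theorem, which mirrors the one already used for Lemma \ref{lemma3.4}.
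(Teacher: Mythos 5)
Your proposal is correct and follows essentially the same route as the paper: the paper's argument is precisely that $R(E/K_\cyc)^\vee$ is finitely generated over $\Z_p$ (via \cite[Corollary 4.38]{w} or Remark \ref{mu0lneqp}), that the control map $R(E/K_\cyc)\rightarrow R(E/K_\infty)^H$ has kernel and cokernel cofinitely generated over $\Z_p$ (using the same local and global finiteness inputs from \cite{bl2} as in Lemma \ref{lemma3.4}), and then topological Nakayama over $\Z_p[[H]]$, with the $\Z_p[[G]]$-torsion claim being the standard consequence of finite generation over $\Z_p[[H]]$ for the infinite-index subgroup $H$. You merely spell out the details (in particular the torsion step via the Ore-domain embedding of $\Z_p[[G]]$) that the paper leaves implicit.
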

At this point, it is natural to ask if the analogue of Conjecture \ref{conjb} is true in this function field setting (${\ell}\neq p$) and in particular, for $K_\infty$, whether $R(E/K_\infty)^\vee$ is $\Z_p[[H]]$ torsion?
We will give an explicit counterexample (Example \ref{example}) to this question using the following result of \cite{bv}.
\begin{proposition} \cite[Proposition 4.3]{bv} \label{prop5.4}
	Let $E, K$ and $S$ be as defined in \S \ref{section4}.
	Let $G=\gal(\mathcal L/K)$ be a compact $p$-adic Lie group without elements of order $p$ and of dimension $\geq 2$. If $H^2(G_S(\mathcal L), E_{p^\infty})=0$ and $cd_p(G_v)=2$ for every $v\in S$, then $R(E/\mathcal L)^{\vee}$ has no non-trivial pseudonull submodule.
\end{proposition}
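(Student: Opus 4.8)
The plan is to reduce the assertion to a structural property of the second Iwasawa cohomology group $H^2_{\mathrm{Iw}}(T_pE/\mathcal L)$ and then to exploit the Auslander regularity of $\Lambda(G)=\Z_p[[G]]$ (Venjakob) through Jannsen's spectral sequence for Iwasawa adjoints. First I would note that the Poitou--Tate sequence \eqref{eq5} holds over $\mathcal L$ in this function field setting (the same sequence is used in the proof of Proposition~\ref{proposition3.5}), and that local Tate duality identifies $K^0_v(E/\mathcal L)^\vee$ with the semilocal module $\bigoplus_{w\mid v}H^2_{\mathrm{Iw}}(T_pE/\mathcal L_w)\cong\bigoplus_{w\mid v}H^0(\mathcal L_w,E_{p^\infty})^\vee$. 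Dualising \eqref{eq5} then presents $R(E/\mathcal L)^\vee$ as the kernel of the localisation map
\[
H^2_{\mathrm{Iw}}(T_pE/\mathcal L)\longrightarrow \bigoplus_{v\in S}\ \bigoplus_{w\mid v}H^2_{\mathrm{Iw}}(T_pE/\mathcal L_w),
\]
so that $R(E/\mathcal L)^\vee$ is a $\Lambda(G)$-submodule of $H^2_{\mathrm{Iw}}(T_pE/\mathcal L)$ and it suffices to prove that no non-zero pseudonull $\Lambda(G)$-submodule of $H^2_{\mathrm{Iw}}(T_pE/\mathcal L)$ is contained in this kernel.

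Next I would feed the vanishing hypothesis into Jannsen's spectral sequence $E_2^{i,j}=\Ext^i_{\Lambda(G)}\big(H^j(G_S(\mathcal L),E_{p^\infty})^\vee,\Lambda(G)\big)\Rightarrow H^{i+j}_{\mathrm{Iw}}(T_pE/\mathcal L)$ (using the Weil pairing $T_pE\cong\Hom(E_{p^\infty},\mu_{p^\infty})$). Since $G_S(\mathcal L)$ is a closed subgroup of $G_S(K)$, which has $p$-cohomological dimension $2$ for a function field, and since $H^2(G_S(\mathcal L),E_{p^\infty})=0$ by hypothesis, only the columns $j=0,1$ survive. As $G$ has no element of order $p$ and $\dim G\ge 2$, $\Lambda(G)$ is Auslander regular of global dimension $\dim G+1\ge 3$, and a finitely generated $\Lambda(G)$-module is pseudonull exactly when its grade is $\ge 2$. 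The spectral sequence thus equips $H^2_{\mathrm{Iw}}(T_pE/\mathcal L)$ with a two-step filtration whose top quotient embeds into $\Ext^1_{\Lambda(G)}\big(H^1(G_S(\mathcal L),E_{p^\infty})^\vee,\Lambda(G)\big)$ --- which, by the purity of first $\Ext$-modules over an Auslander regular ring, has no non-zero pseudonull submodule --- and whose bottom submodule is a quotient of $\Ext^2_{\Lambda(G)}\big(E_{p^\infty}(\mathcal L)^\vee,\Lambda(G)\big)$. Hence every pseudonull submodule of $H^2_{\mathrm{Iw}}(T_pE/\mathcal L)$ lies in this bottom piece, which is governed entirely by $H^0(G_S(\mathcal L),E_{p^\infty})=E_{p^\infty}(\mathcal L)$; I would remark that the bottom piece already vanishes when $\dim G\ge 3$ or when $E_{p^\infty}(\mathcal L)$ is finite, so the only delicate case is $\dim G=2$ with $E_{p^\infty}(\mathcal L)$ infinite.

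It then remains to rule out that this pseudonull bottom piece meets the kernel of the localisation map, and this is exactly the point at which the hypothesis $cd_p(G_v)=2$ enters. At each $v\in S$ the restriction $E_{p^\infty}(\mathcal L)\hookrightarrow E_{p^\infty}(\mathcal L_w)$ is injective, so after dualising, applying $\Ext^2_{\Lambda(G)}(-,\Lambda(G))$, and accounting for the compact induction $\Lambda(G)\otimes_{\Lambda(G_v)}(-)$ from the decomposition group $G_v$, one obtains a map comparing the global bottom piece with the semilocal terms. The assumption $cd_p(G_v)=\dim G_v=2$ is what makes the induced semilocal module at $v$ of the correct $\Lambda(G)$-dimension --- namely $\dim G-1$, the dimension of the global bottom piece --- so that this comparison map is injective on it; were $cd_p(G_v)<2$ somewhere, the induced local term would have strictly larger dimension and the argument would not go through. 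Therefore $R(E/\mathcal L)^\vee$, sitting inside $H^2_{\mathrm{Iw}}(T_pE/\mathcal L)$ but meeting its pseudonull bottom piece trivially, has no non-trivial pseudonull $\Z_p[[G]]$-submodule.

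I expect this last step to be the genuine obstacle: making precise the compatibility between the global and the semilocal Jannsen spectral sequences, and showing that $cd_p(G_v)=2$ forces the localisation map to be injective on the pseudonull bottom piece. This needs a careful dimension-theoretic analysis of the induced modules $\Lambda(G)\otimes_{\Lambda(G_v)}H^0(\mathcal L_w,E_{p^\infty})^\vee$ and of the edge morphisms of the two spectral sequences; the remaining ingredients --- the analogue of \eqref{eq5}, local Tate duality, Auslander regularity of $\Lambda(G)$, and the vanishing of the $j\ge 2$ columns --- are formal.
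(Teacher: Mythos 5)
Your global skeleton (dualising \eqref{eq5} to embed $R(E/\mathcal L)^{\vee}$ into $H^2_{\mathrm{Iw}}(T_pE/\mathcal L)$, then filtering $H^2_{\mathrm{Iw}}$ by Jannsen's spectral sequence with the $j=2$ column killed by the hypothesis $H^2(G_S(\mathcal L),E_{p^\infty})=0$) is reasonable, but the argument does not close, and the gap is exactly where you flag it. The assertion that $cd_p(G_v)=2$ forces the localisation map $H^2_{\mathrm{Iw}}(T_pE/\mathcal L)\to\bigoplus_{v\in S}\bigoplus_{w\mid v}H^0(\mathcal L_w,E_{p^\infty})^{\vee}$ to be injective on the bottom filtration piece $E_\infty^{2,0}$ is never proved; the heuristic offered (that the induced semilocal modules have the ``correct'' dimension $\dim G-1$) cannot prove it, since matching dimensions of source and target says nothing about injectivity of a specific comparison map, and no compatibility between the global and semilocal spectral sequences is established. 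A second, load-bearing claim is also false as stated: over an Auslander regular ring, $\Ext^1_{\Lambda(G)}(N,\Lambda(G))$ need not be pure of grade $1$ when $N$ is not torsion. For instance, over $\Lambda=\Z_p[[x,y]]$ the cokernel $N$ of the map $\Lambda\to\Lambda^2$ given by $(x,y)$ has $\Ext^1_{\Lambda}(N,\Lambda)\cong\Lambda/(x,y)\cong\Z_p$, which is pseudonull. Purity of $E^1(N)$ holds only when $N$ has grade $\geq 1$, so your step ``every pseudonull submodule of $H^2_{\mathrm{Iw}}$ lies in the bottom piece'' requires first showing that $H^1(G_S(\mathcal L),E_{p^\infty})^{\vee}$ is $\Lambda(G)$-torsion (true here, via weak Leopoldt and the global Euler characteristic over function fields, but not addressed).

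Note also that the paper does not reprove this statement; it quotes \cite[Proposition 4.3]{bv}, and there the hypothesis $cd_p(G_v)=2$ enters in a far more direct way than in your plan. Since the local fields $K_v$ have residue characteristic $\ell\neq p$, their absolute Galois groups have $p$-cohomological dimension $2$; a decomposition group with $cd_p(G_v)=2$ therefore forces the absolute Galois group of $\mathcal L_w$ to have $p$-cohomological dimension $0$, so $K^1_v(E/\mathcal L)=\prod_{w\mid v}H^1(\mathcal L_w,E_{p^\infty})=0$ for every $v\in S$, and hence $R(E/\mathcal L)=H^1(G_S(\mathcal L),E_{p^\infty})$. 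The proposition then reduces to the Ochi--Venjakob-type structural theorem that, when $H^2(G_S(\mathcal L),E_{p^\infty})=0$, $\dim G\geq 2$ and $G$ has no $p$-torsion, the module $H^1(G_S(\mathcal L),E_{p^\infty})^{\vee}$ has no nontrivial pseudonull submodule; the Jannsen spectral sequence and Auslander regularity are applied there, directly to $H^1(G_S(\mathcal L),E_{p^\infty})^{\vee}$, rather than to a filtration piece of $H^2_{\mathrm{Iw}}$ that must then be threaded through a localisation map. In short, your machinery is aimed at the right objects, but the step meant to carry the hypothesis $cd_p(G_v)=2$ is missing, and the intended use of that hypothesis is to annihilate the local conditions, not to run a dimension count on induced modules.
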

Thus to use the above result for $\mathcal L=K_\infty$, we need to establish the vanishing of $H^2(G_S(K_\infty),E_{p^\infty})$.

\begin{proposition}\label{proposition3.13}
	Let $E/K$ be such that $j(E)\notin \FF$. Then  $H^2(G_S(K_\cyc),E_{p^\infty})=0$ and consequently, $H^2(G_S(K_\infty),E_{p^\infty})$ also vanishes.
	
\end{proposition}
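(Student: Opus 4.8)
The plan is to prove the sharper statement that $\mathrm{cd}_p\big(G_S(K_\cyc)\big)\le 1$. Since $E$ has good reduction outside $S$ and $\mathrm{char}\,K=\ell\neq p$, the N\'eron--Ogg--Shafarevich criterion shows that $E_{p^\infty}$ is unramified outside $S$, so it is a $p$-primary discrete $G_S(K_\cyc)$-module; the dimension bound then forces $H^2(G_S(K_\cyc),E_{p^\infty})=0$ (indeed $H^i$ vanishes for all $i\ge 2$ with any $p$-primary coefficients).

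The point driving everything is that $K_\cyc=\FF^{(p)}(t)$ is a \emph{constant field} extension of $K$, hence unramified at every place; the same holds for $\widetilde K:=\overline{\FF}(t)=K\cdot\overline{\FF}$. First I would record that $K\subset K_\cyc\subset\widetilde K\subset K_S$ and that, exactly as in the identity $(K_\cyc)_S=K_S$, the field $K_S$ is also the maximal extension of $\widetilde K$ unramified outside $S$: the Galois closure over $K$ of a finite subextension of $K_S/\widetilde K$ ramifies only at places of $S$, because the intermediate constant extensions are everywhere unramified. This yields an exact sequence of profinite groups
$$1\longrightarrow \mathrm{Gal}(K_S/\widetilde K)\longrightarrow G_S(K_\cyc)\longrightarrow \mathrm{Gal}(\widetilde K/K_\cyc)\longrightarrow 1.$$

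Next I would bound $\mathrm{cd}_p$ of the two outer terms. The quotient is $\mathrm{Gal}(\widetilde K/K_\cyc)\cong\mathrm{Gal}(\overline{\FF}/\FF^{(p)})\cong\prod_{q\neq p}\Z_q$, whose Sylow pro-$p$ subgroup is trivial, so $\mathrm{cd}_p\big(\mathrm{Gal}(\widetilde K/K_\cyc)\big)=0$. The kernel $\mathrm{Gal}(K_S/\widetilde K)$ is the \'etale fundamental group of an open subcurve of $\mathbb P^1_{\overline{\FF}}$ over the algebraically closed field $\overline{\FF}$ of residue characteristic $\ell\neq p$ ($S\neq\emptyset$, since it contains the primes of $K$ ramified in the nontrivial extension $K_\infty/K$); hence $\mathrm{cd}_p\big(\mathrm{Gal}(K_S/\widetilde K)\big)\le 1$, cf.\ \cite{nsw}. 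By subadditivity of $\mathrm{cd}_p$ along the extension above, $\mathrm{cd}_p\big(G_S(K_\cyc)\big)\le 0+1=1$, giving $H^2(G_S(K_\cyc),E_{p^\infty})=0$. Finally, since $S$ contains every prime of $K$ ramified in $K_\infty$ we have $K_\cyc\subset K_\infty\subset K_S$, so $G_S(K_\infty)$ is a closed subgroup of $G_S(K_\cyc)$ and hence $\mathrm{cd}_p\big(G_S(K_\infty)\big)\le 1$, whence $H^2(G_S(K_\infty),E_{p^\infty})=0$.

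The only steps requiring any care are the equality $\widetilde K_S=K_S$ and the identification of $\mathrm{Gal}(K_S/\widetilde K)$ with the fundamental group of an affine curve, for which $\mathrm{cd}_p\le 1$ is classical — both routine. There is no real obstacle: the argument hinges on the elementary fact that over a function field the cyclotomic $\Z_p$-extension is just a constant field extension, so the ``arithmetic'' factor $\mathrm{Gal}(\overline{\FF}/\FF^{(p)})$ contributes nothing to the $p$-cohomological dimension. This is precisely the phenomenon that makes the vanishing elementary in characteristic $\ell\neq p$, in contrast with the number field analogue, which rests on Kato's theorem (equivalently the weak Leopoldt conjecture).
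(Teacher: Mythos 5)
Your proof is correct, but it takes a genuinely different route from the paper's. The paper deduces $H^2(G_S(K_\cyc),E_{p^\infty})=0$ from the fact, already established in \S\ref{section4}, that $R(E/K_\cyc)^\vee$ is a finitely generated $\Z_p$-module (Witte's theorem, respectively the class-group argument of Remark \ref{mu0lneqp}), fed into Jannsen's spectral sequence as in \cite[Lemma 3.1]{cs}, and then passes to $K_\infty$ via Hochschild--Serre for $H\cong\Z_p$. You instead prove the stronger statement $\mathrm{cd}_p\big(G_S(K_\cyc)\big)\le 1$ directly: since $K_\cyc/K$ is a constant-field extension, $K_S$ is also the maximal extension of $\widetilde K=\overline{\FF}(t)$ unramified outside $S$; the quotient $\gal(\widetilde K/K_\cyc)\cong\prod_{q\neq p}\Z_q$ has trivial pro-$p$ part, and the kernel is the fundamental group of the affine curve $\mathbb{P}^1_{\overline{\FF}}\setminus S$ (nonempty, as $K_\infty/K$ is ramified and $E$ has bad reduction), whose $\mathrm{cd}_p$ is at most $1$ because $p\neq\ell$ --- the classical $K(\pi,1)$/Artin-vanishing fact for affine curves over an algebraically closed field; subadditivity of $\mathrm{cd}_p$ and the closed-subgroup property then give both vanishings at once. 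Your route needs neither the non-isotriviality of $E$ nor any Selmer-group input, and it yields $H^i(G_S(\mathcal{L}),M)=0$ for all $i\ge 2$, every intermediate field $K_\cyc\subset\mathcal{L}\subset K_S$ and every $p$-primary discrete module $M$; the paper's route, by contrast, runs entirely on material it has already set up (the $\mu=0$ statement) and parallels the number-field argument, where no such cohomological-dimension shortcut exists. The only step in your write-up carrying real weight is the bound $\mathrm{cd}_p\le 1$ for the geometric fundamental group of an affine curve in characteristic $\ell\neq p$, and that is classical and correctly invoked.
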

\begin{proof}
	As $H\cong \Z_p$, using Hochschild-Serre spectral sequence $H^2(G_S(K_\cyc),E_{p^\infty})=0$ implies $H^2(G_S(K_\infty),E_{p^\infty})=0$. As $R(E/K_\cyc)^\vee$ is a finitely generated $\Z_p$ module, the vanishing of $H^2(G_S(K_\cyc),E_{p^\infty})$ can be deduced form Jannsen's spectral sequence \cite{j2} (see \cite[Lemma 3.1]{cs} for the details).
\end{proof}
Now,  we present a counterexample to an analogue of Conjecture \ref{conjb}.
\begin{example}\label{example}
Set $K=\FF_5(t)$ and let $E$ be the elliptic curve defined over $K$ given by the Weierstrass equation:\begin{small}{\begin{equation*} y^2+xy=x^3-t^{6}.\end{equation*}}
	\end{small}
	Here the discriminant of $E$ { is equal to} $t^6(1+3t^6)=3t^6(t^2-2)(t^2+2t+3) (t^2+3t+3) $. Let $p\neq 5$ be an odd prime and take $q(t)$ { to be equal to} $t(t^2-2)(t^2+2t+3) (t^2+3t+3)$. Now, define $K_\infty= \underset{n\geq 0}\bigcup\FF_5^{(p)}(t) (q(t)^{1/p^n})$. The primes of $K$ that ramify in $K_\infty$ are given by the irreducible divisors of $q(t)$ in $K$ and also the infinite prime. In \cite[\S 2.2 and \S 2.3]{ul}, it was shown that the bad primes for $E$ are the irreducible divisors of $q(t)$ in $K$ and $E$ has split multiplicative reduction at all these bad primes. Hence, we can take $S=\{\text{irreducible divisors of }q(t) \text{ in } K, \text { the infinite prime} \}$. By \cite[Lemma 4.2]{bl2}, we get that $cd_p(G_v)=2$ for every $v\in S$. Also by Proposition \ref{proposition3.13}, we have $H^2(G_S(K_\infty), E_{p^\infty})=0 $. Hence, all the hypotheses of Proposition \ref{prop5.4} are satisfied. 
	Using \cite[Theorem 1.5]{ul}, we also obtain that $\ran_{\Z}(E(K))\geq 2$ and hence $\cora_{\Z_p}(R(E/K))\geq 2$. As $j(E)\notin \FF_5$, from Lemma \ref{lemma3.4}, we deduce that $\cora_{\Z_p}(R(E/K_\infty)^G)\geq 2$. This implies that $R(E/K_\infty)\neq 0$ (Theorem \ref{theo:Ho}).

	Hence, $R(E/K_\infty)^\vee$ is a  torsion $\Z_p[[G]]$ module, which is  finitely generated over $\Z_p[[H]]$  and is not a pseudo null $\Z_p[[G]]$ module.
\end{example}
\section{Fine Selmer group over function fields of characteristic $ p$}\label{section5}

{ We fix a prime $p$ (includes $p=2$)}.
Let $\FF$ be a finite field of order $p^r$, for some $r \in \mathbb N$ and  $K$ denote the function field $\FF(t)$. In this section, we  study the $p^\infty$-fine Selmer groups of elliptic curves over  $p$-adic Lie extensions of $K$.  In fact, for simplicity, we restrict ourselves to the case when $\gal(K_\infty/K)\cong \Z_p^d$. Although, we believe analogue of Conjecture \ref{conjb} should be investigated over non-commutative $p$-adic Lie extensions of function fields of characteristic $p$ also.  We crucially use the properties of the arithmetic (unramified) $\Z_p$ extension and   $\Z_p^{d-1}$ extensions of $K$ constructed using Carlitz modules and hence discuss an analogue of {Conjecture} \ref{conjb} for the composite $\Z_p^d$ extensions of $K$ and prove it under suitable hypotheses.

Set $C_K:=\mathbb P^1_{\mathbb F}$. Throughout this section, $E$ will denote  an  elliptic curve over $K$ and $U$ will be a dense open subset of $C_K$ such that { $E/K$ has good reductions at every place of $U$.} Let $\mathcal{E}$ denote the N\'eron model of $E$ over $C_K$.   Let $\Sigma_K$ be the set of all the places of $K$  and $S_1$ be the  set of places of $K$ outside $U$ i.e., the places of $C_K\setminus U$.  Let $S_2$ be the finite set of primes of $K$, where $E$ has supersingular reduction. { Let $S$ to be the finite set of places of $K$ containing $S_1$ and  $S_2$. We fix the set $S$ throughout \S \ref{section5}.}

Let $L$ be a finite extension   of $K$ inside $K_S$, the maximal algebraic extension of $K$ unramified outside $S$. Let $v$ be any  prime of $K$ and $w$ denote  a prime of $L$. Define
\begin{small}
\begin{equation}\label{equationdefine}
	\begin{aligned}
		J_v^1(E/L):=\underset{w\mid v}\prod	\frac{H^1_{\fla}(L_w, E_{p^\infty})}{im(\kappa_w)} \ \text{and }
		K_v^1(E/L):= \underset{w\mid v}\prod	H^1_{\fla}(L_w, E_{p^\infty}).	
	\end{aligned}
\end{equation}
\end{small}
Here $H^i_{\text{fl}}(-,-) $ denote the flat cohomology \cite[Chapters II, III]{m2} and $\kappa_w: E(L_w)\otimes {\Q_p}/{\Z_p}\inj H^1_{\fla}(L_w,E_{p^\infty})$ is induced by the Kummer map. Recall the following  definition of the Selmer group from \cite{kt}:
\begin{definition}\cite[Prop. 2.4]{kt}
	With $\Sigma_K, S \text{ and } K\subset L \subset K_S$ as above, define 
	\begin{small}
	\begin{equation}\label{definition24}
		S(E/L):= \ker\big(H^1_{\fla}(L, {E}_{p^\infty})\lra \underset{v\in \Sigma_K}\bigoplus J_v^1(E/L)\big).
	\end{equation}
\end{small}
\noindent Analogous to the definition of the fine Selmer group over {a} number field (\ref{defn1.1}) and {a} function field of characteristic $\neq p$  (\ref{def3.1}), we define the $S$-fine Selmer group as:
	\begin{small}
	\begin{equation}\label{definition4.1}
		\begin{aligned}
			R^S(E/L):={} \ker\big(H^1_{\fla}(L, E_{p^\infty})\lra \underset{v\in S}\bigoplus K_v^1(E/L) \underset{v\in \Sigma_K\setminus S}\bigoplus J_v^1(E/L)\big)\\
			\cong \ker\big (S(E/L)\lra \underset{w\mid v, v\in S}\bigoplus E(L_w)\otimes \Q_p/\Z_p\big).
		\end{aligned}
	\end{equation}
\end{small}
\end{definition}
Note that in the number field case for all  places $w\nmid p$ of $L$ and in the function field case of char $\neq p$, for all primes $w$ of $L$, $E(L_w)\otimes \Q_p/\Z_p =0$.

For an infinite algebraic extension $\mathcal L$ of $K$, the above definitions extends,  as usual, by taking inductive limit over finite subextensions of $\mathcal L$ over K. 

\begin{remark}[Dependence on $S$]\label{remark2.2}
	With $E,S,K$ as above, recall the
	following equivalent definition \cite{kt} of the Selmer group:
	\begin{small}
	\begin{equation}\label{definition25}
		S(E/K):=  \ker\big(H^1_{\fla}(U, \mathcal{E}_{p^\infty})\lra \underset{v\in S}\bigoplus J^1_v(E/K)\big ).
	\end{equation}	
\end{small}
	Using definitions \ref{definition4.1} and \ref{definition25}, we have
	\begin{small}
	\begin{equation}\label{defn25}
		R^S(E/K):=  \ker\big(H^1_{\fla}(U, \mathcal{E}_{p^\infty})\lra \underset{v\in S}\bigoplus K^1_v(E/K)\big).
	\end{equation}
\end{small}
	In fact, in \cite[Proposition 2.4]{kt}, the authors showed that the two definitions (\ref{definition24} and \ref{definition25}) of $S(E/K)$  are equivalent.  The key ingredient in the proof is the following exact sequence \cite[Chapter 3, \S 7]{m1}:
	\begin{small}{\begin{equation}
		0\rightarrow H^1_{\fla}(U, \mathcal{E}_{p^\infty})\lra H^1_{\fla}(K, {E}_{p^\infty})\lra \underset{v\in U}\oplus H^1_{\fla}(K_v, E_{p^\infty})/H^1_{\fla}(O_v, E_{p^\infty}),
	\end{equation}}\end{small}
	where $O_v$ is the valuation ring of $K_v$ and $H^1_{\fla}(O_v, E_{p^\infty})\cong E(K_v)\otimes \Q_p/\Z_p .$  
	In particular, it shows that the definition of $S(E/K)$ in \eqref{definition25} is independent of $S$.
	However in the definition of $R^S(E/K)$, for $v\in S$, we have a different local term given by $K_v^1(E/K)$.
	
	For any $p$-adic Lie extension $L_\infty/K$ where $G$= $\gal(L_\infty/K)$  is a compact $p$-adic Lie group   without any $p$-torsion,  $\Z_p[[G]]$ is left and right  Noetherian. For any two finite sets $S_1 \subset S_2$ of primes of $K$ containing the primes of bad reduction of $E$, we have $R^{S_2}(E/L_\infty)\subset  R^{S_1}(E/L_\infty) $. Thus, for a sufficiently large $S$, using the Noetherianess of $\Z_p[[G]]$ and  the module $R^{S_1}(E/L_\infty)^\vee$, we see that $R^S(E/L_\infty)$ is independent of $S$. However, we cannot determine this set $S$ explicitly.  
\end{remark}
Now we discuss  Euler characteristic of $R^S(E/L_\infty)$, where $\mathrm{Gal}(L_\infty/K)\cong\Z_p^d$.
\begin{proposition}\label{thm:control}
	Let $K\subset L_\infty\subset K_S$ be a Galois extension over $K$ such that $\gal(L_\infty/K)\cong \Z_p^d.$ 
Let $E/K$ be an elliptic curve and  $S$ be as defined before. Then $\chi(G,R^S(E/L_\infty))$ exists if $R^S(E/K)$ is finite.
\end{proposition}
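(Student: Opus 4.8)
The plan is to establish a control theorem comparing $R^S(E/K)$ with $R^S(E/L_\infty)^G$ and then to invoke the commutative case of the Euler characteristic criterion. Since $G = \gal(L_\infty/K) \cong \Z_p^d$ is commutative (hence in particular finite by nilpotent), by the remark after the definition of $\chi(G,M)$ it suffices to prove that $H^0(G, R^S(E/L_\infty)) = R^S(E/L_\infty)^G$ is finite. So the whole proposition reduces to a single control-theoretic statement: the natural restriction map $R^S(E/K) \lra R^S(E/L_\infty)^G$ has finite kernel and cokernel, whence finiteness of $R^S(E/K)$ forces finiteness of $R^S(E/L_\infty)^G$.

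To carry this out, I would first fit the restriction map into a commutative diagram with exact rows, the top row being the defining sequence of $R^S(E/K)$ from \eqref{defn25} and the bottom row the $G$-invariants of the defining sequence of $R^S(E/L_\infty)$, using inflation-restriction. Concretely one compares
\[
0 \to R^S(E/K) \to H^1_{\mathrm{fl}}(U,\mathcal E_{p^\infty}) \to \bigoplus_{v\in S} K^1_v(E/K)
\]
with the analogous sequence over $L_\infty$ taken in $G$-cohomology. By the snake lemma, the kernel and cokernel of the map on fine Selmer groups are controlled by (i) the kernel and cokernel of $H^1_{\mathrm{fl}}(U,\mathcal E_{p^\infty}) \to H^1_{\mathrm{fl}}(U_{L_\infty},\mathcal E_{p^\infty})^G$, and (ii) the kernels of the local maps $K^1_v(E/K) \to K^1_v(E/L_\infty)^G$ for $v \in S$. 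The global piece is governed, via the Hochschild--Serre spectral sequence for $G$ acting on $H^*_{\mathrm{fl}}(U_{L_\infty},\mathcal E_{p^\infty})$, by the cohomology groups $H^i(G, \mathcal E_{p^\infty}(U_{L_\infty}))$, i.e.\ by $H^i$ of $G$ with coefficients in the (finite) group $\mathcal E_{p^\infty}(L_\infty)$; the local pieces are $H^1$ of the local Galois groups $G_v = \gal(L_{\infty,w}/K_v)$ with coefficients in $E_{p^\infty}(L_{\infty,w})$. Since $E_{p^\infty}(L_\infty)$ and each $E_{p^\infty}(L_{\infty,w})$ are cofinitely generated over $\Z_p$ and $G$, $G_v$ are $p$-adic Lie groups of finite cohomological dimension, all these cohomology groups are cofinitely generated $\Z_p$-modules; the point is to show they are in fact finite. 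For the good-reduction primes in $U$ this is where one uses that $E$ has good reduction and $\mathcal E_{p^\infty}$ is the $p^\infty$-torsion of the N\'eron model; the relevant finiteness statements are the characteristic-$p$ function-field analogues of \cite[Lemmas 3.3, 4.2]{bl2} and should follow from the structure of $E_{p^\infty}$ as a finite flat group scheme together with the fact that $\mathcal E_{p^\infty}(L_\infty)$ is finite (the Galois action on the Tate module is open image type since $E$ is non-isotrivial, or one argues directly that the fixed points are finite).

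The step I expect to be the main obstacle is establishing that all of these local and global $G$-cohomology groups are genuinely \emph{finite} rather than merely cofinitely generated, and in particular handling the bad-reduction primes $v \in S$ correctly: there the local term $K^1_v(E/L_\infty) = \varinjlim_w H^1_{\mathrm{fl}}(L_{\infty,w}, E_{p^\infty})$ can be large, and one must verify that $\ker\big(K^1_v(E/K) \to K^1_v(E/L_\infty)^G\big) \cong H^1(G_v, E_{p^\infty}(L_{\infty,w}))$ is finite. This requires knowing that $E_{p^\infty}(L_{\infty,w})$ is finite, which in turn rests on the $p$-adic Lie structure of the decomposition groups $G_v$ and the characteristic-$p$ analogue of the local finiteness results; in the wildly ramified characteristic-$p$ situation these flat-cohomology computations are the delicate part, and I would lean on the results of \cite{kt}, \cite{bl2}, and Milne \cite{m1,m2} to supply them. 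Once finiteness of kernel and cokernel of the control map is in hand, the proposition is immediate: $R^S(E/K)$ finite $\Rightarrow R^S(E/L_\infty)^G$ finite $\Rightarrow$ (commutativity of $G$) $\chi(G, R^S(E/L_\infty))$ exists.
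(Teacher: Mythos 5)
Your overall strategy is exactly the paper's: since $G\cong\Z_p^d$ is commutative, the existence of $\chi(G,R^S(E/L_\infty))$ reduces to finiteness of $R^S(E/L_\infty)^G$, and this is obtained from a control theorem for the map $R^S(E/K)\to R^S(E/L_\infty)^G$. Where you differ is in how the control theorem is supplied. The paper does not redo the diagram chase: it observes that the local conditions defining $R^S$ and $S$ agree at every place outside $S$, so the control theorem of Bandini--Longhi for the Selmer group in this characteristic-$p$ setting (\cite[Theorem 4.4]{bl1}) carries over verbatim once one adds the single extra input that the terms at $v\in S$ are harmless, namely that $E_{p^\infty}(K_v)$ is finite for each $v\in S$, which is drawn from \cite[Theorem 4.12, \S 2.1.2]{bl1} and \cite[2.5.1]{t}. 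Your route, by contrast, attempts to reprove the control theorem from the definition over $U$ via Hochschild--Serre and the snake lemma.

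The soft spot in your write-up is precisely the point you flag yourself: the finiteness of the local and global $G$-cohomology kernels, especially at the ramified places $v\in S$, and the validity of the inflation--restriction comparison for \emph{flat} cohomology there. The references you propose to lean on, \cite[Lemmas 3.3, 4.2]{bl2}, belong to the characteristic-$\ell\neq p$ setting of \S 4 (Galois cohomology of $E_{p^\infty}$ with $\ell\neq p$); in characteristic $p$ the sheaf $E_{p^\infty}$ has a connected part, the covers $L_{\infty,w}/K_v$ at $v\in S$ are wildly ramified, and those lemmas have no direct analogue that you can simply quote. Likewise the aside about open image for non-isotrivial $E$ is not available here, since the proposition makes no non-isotriviality (or ordinarity) assumption. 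What actually closes the argument in the paper is the pair of characteristic-$p$ facts above: the already-proved control theorem \cite[Theorem 4.4]{bl1} for $S(E/-)$, plus finiteness of $E_{p^\infty}(K)$ and of $E_{p^\infty}(K_v)$ for $v\in S$ from \cite{bl1} and \cite{t}. With those inputs substituted for your bl2-analogues, your argument matches the intended proof; without them, the ``main obstacle'' you identify is left genuinely open rather than resolved.
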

\begin{proof}
	Since $G$ is commutative,  it suffices to show that $ R^S(E/L_\infty)^{G} $ is finite.
	Note that $E_{p^\infty}(K)$ is finite. Also, for each $v\in S$, $E_{p^\infty}(K_v)$  is finite  (\cite[Theorem 4.12, \S 2.1.2]{bl1} and \cite[2.5.1]{t}). By our assumption, $R^S(E/K)$ is finite. Notice that the local terms appearing in the definitions of $R^S(E/K)$ and  $S(E/K)$ outside $S$, are the same.
	Now using a control theorem for $R^S(E/K)\lra R^S(E/L_\infty)^{G}$,  similar to the control theorem for $S(E/K)\lra S(E/L_\infty)^{G}$ \cite[Theorem 4.4]{bl1}, we get that $ R^S(E/L_\infty)^{G}$ is finite. 
\end{proof}	
\subsection{Analogues of Conjecture \ref{conja}}\label{subs3.1}
Next, we will discuss  analogues of Conjecture \ref{conja} in this setting. Note that the only $p^\text{th}$-root of unity in $\bar{\FF}_p$ is $1$. We discuss  analogues of Conjecture \ref{conja} for two specific $\Z_p$ extensions  of $K$, widely discussed in the literature; namely, the arithmetic $\Z_p$ extension $K_\infty$ and the geometric $\Z_p$ extension  $\widetilde{K}_1$. Let $\FF_p^{(p)}$ be the unique subfield of $\bar{\FF}_p$ such that $\gal(\FF_p^{(p)}/\FF_p)\cong \Z_p$. Set $K_\infty:=K\FF_p^{(p)}$. Notice that $K_\infty/K$ is unramified everywhere. On the other hand, let $ \widetilde{K}_1$ be { a} geometric $\Z_p$ extension of $K$ constructed using Carlitz module, a particular type  of Drinfeld module.

{Let us briefly recall the construction of $\Z_p^d$ extensions arising from Cartliz-modules (see \cite[Chapter 12]{ro} and also \cite{bl1}). 
Recall that $\FF$ is a finite field of characteristic $p$ and $K=\FF(t)$ is a function field of characteristic $p$.
Let $P(t)=a_nt^n+\cdots + a_0 \in \FF [t]$ be a polynomial of degree $n$ with coefficients in $\FF$. The Carlitz polynomial associated to $P(t)$, denoted by $[P(t)](X)$, is defined  recursively as follows:

		$[1] (X)=X$, 
		
		$[t] (X)=X^p +tX$,
		
		$[t^n] (X)=[t]([t^{n-1}](X))$ and
		
		$[a_n t^n+\cdots+a_1 t+a_0](X)= a_n[t^n](X)+\cdots +a_1[t](X)+a_0 (X).$
		
		 Let $F$ be a field extension  of $K$. Then $F$ can be thought of as a $\FF[t]$-module, where the action of $\FF[t]$  is given by the Carlitz polynomials.

        Choose a non-zero prime ideal  $(\mathfrak P(t))$ of $\FF[t]$, generated by an irreducible polynomial $\mathfrak{P}(t)$. 

        For $n\in \N$, consider the Carlitz polynomial  $ [\mathfrak{P}^n](X)$. Let $\overline{K}$ denote the separable closure of $K$. For $n\geq 1$, define
		$$\Lambda_{\mathfrak P^n}:=\{\lambda\in \overline{K}| [\mathfrak{P}^n](\lambda)=0\},$$

    \noindent  the set of all roots of $[\mathfrak{P}^n](X)$ in $\overline{K}$. 
        It can be verified that $K(\Lambda_{\mathfrak P ^n})/K$ is a Galois extension with 
        Galois  group $\gal(K(\Lambda_{\mathfrak P ^n})/K)\cong (\FF[t]/\mathfrak{P}^n)^\times$ \cite[Theorem 12.7]{ro}. Put $\widetilde{K}=\widetilde{K}_{\mathfrak{P}} :=\underset{n\geq 1}\bigcup K(\Lambda_{\mathfrak{P}^n})$. Then  $\widetilde {K}/K$ is Galois with
$\gal(\widetilde K /K)\cong \Z_p^\N \times (\FF[t]/\mathfrak{P})^\times$.

For each $d\geq 1$, the above mentioned construction of $\widetilde{K}$ via Carlitz module gives rise to Galois extensions  $\widetilde{K}_d$  of $\widetilde{K}$  such that 
$K\subset \widetilde{K}_d\subset \widetilde{K}_{d+1}\subset \widetilde{K}$, $\gal(\widetilde{K}_d/K)\cong \Z_p^d$ and $\gal\big((\underset{d\geq 1}\cup \widetilde{K}_d)/K\big)\cong \Z_p^\N$. 
For each $d\geq 1$, $\widetilde{K}_d$ is ramified only at the prime $\mathfrak P$ and it is totally ramified at that prime \cite[Proposition 12.7]{ro}.}

We will show that, under suitable hypotheses, the analogues of Conjecture \ref{conja} are true over  $K_\infty$ (Theorem \ref{theo:muinvariant}) and also $ \widetilde{K}_{1}$ (Corollary \ref{cormu}) i.e. $R^S(E/K_\infty)^\vee$ and $R^S(E/ \widetilde{K}_{1})^\vee$ are finitely generated $\Z_p$ modules. One of our main tool in proving this is to explore the relation between the fine Selmer group of $E[p]$ with the corresponding divisor class group of extensions of function field.
 {\it For the rest of this section \ref{section5}, we assume that $E/K$ is an ordinary elliptic curve.}
\begin{definition}\label{definition4.4}
	Let $E, K, S$ be as before and  $L \subset K_S$ be a finite extension of $K$. For $*\in \{ \mu_{p^\infty},\ \Q_p/\Z_p,\ \mu_p,\ \Z/p\Z\}$, set \begin{small}{$K_v^1(*/L):$= $\underset{w\mid v}\bigoplus H^1_{\fla}(L_w, *)$ and $J_v^1(*/L):=\underset{w\mid v}\bigoplus H^1_{\fla}(L_w, *)/H^1_{\fla}(O_w,*)$.}\end{small} We define the groups $S'(*/L)$ and $R^S(*/L)$  as follows:
	\begin{small}{
	\begin{equation}\label{s'defn}
		S'(*/L) := \ker(H^1_{\fla}(L, *)\lra \underset{v\in \Sigma_K}\bigoplus J_v^1(*/L)).
	\end{equation}
}\end{small}
\begin{small}{
	\begin{equation}\label{equation27}
		R^S(*/L):=\ker(H^1_{\fla}(L, *)\lra \underset{v\in S}\bigoplus K_v^1(*/L) )\underset{v\in \Sigma_K\setminus S}\bigoplus J_v^1(*/L)),
	\end{equation}}
\end{small}
	Similarly, the  group $R^S(E[p]/L)$ of $E[p]$ over  $L$ is defined by:
	\begin{small}{\begin{equation}\label{equation27}
		R^S(E[p]/L):=\ker(H^1_{\fla}(L, E[p])\lra \underset{v\in S}\bigoplus K_v^1(E[p]/L) )\underset{v\in \Sigma_K\setminus S}\bigoplus J_v^1(E[p]/L)),
	\end{equation}
where $K_v^1(E[p]/L):= \underset{w\mid v}\bigoplus H^1_{\fla}(L_w, E[p])$ and $J_v^1(E[p]/L):=\underset{w\mid v}\bigoplus H^1_{\fla}(L_w, E[p])/H^1_{\fla}(O_w,\mathcal E[p])$.}\end{small}
\end{definition}
At first, note that by \cite[Theorem 1.7]{ot}, $S(E/K_\infty)^\vee$ (and hence $R^S(E/K_\infty)^\vee$) is a finitely generated torsion $\Z_p[[\Gamma]]$ module. 
\begin{lemma}\label{lemma4.10}
 $\mu(R^S(E/K_\infty)^\vee) =0$ if and only if $R^S(E[p]/K_\infty)$ is finite. 
\end{lemma}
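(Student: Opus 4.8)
The plan is to apply the standard $\mu$-invariant criterion: for a finitely generated torsion $\Z_p[[\Gamma]]$-module $M$, one has $\mu(M) = 0$ if and only if $M/pM$ is finite, equivalently $M[p]$ is finite (cf. \cite{wa}, Proposition 13.20 or the variant used already in this article). We apply this to $M = R^S(E/K_\infty)^\vee$, for which $\mu(M) = 0 \Leftrightarrow M/pM$ finite $\Leftrightarrow (M/pM)^\vee = (R^S(E/K_\infty)^\vee)^\vee[p] = R^S(E/K_\infty)[p]$ is finite. So the whole statement reduces to showing that $R^S(E/K_\infty)[p]$ is finite if and only if $R^S(E[p]/K_\infty)$ is finite. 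This is the content of the lemma, once we recall that $R^S(E/K_\infty)^\vee$ is a finitely generated torsion $\Z_p[[\Gamma]]$-module by \cite[Theorem 1.7]{ot}, as noted just before the statement.

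First I would set up the comparison between $R^S(E[p]/K_\infty)$ and $R^S(E/K_\infty)[p]$ via the multiplication-by-$p$ Kummer sequence $0 \to E[p] \to E_{p^\infty} \overset{p}\to E_{p^\infty} \to 0$ of flat sheaves. Taking flat cohomology over $K_\infty$ (and over each local field $K_{\infty,w}$) yields, after passing to the inductive limit over finite subextensions, a commutative diagram with exact rows comparing $H^1_{\mathrm{fl}}(K_\infty, E[p])$ with $H^1_{\mathrm{fl}}(K_\infty, E_{p^\infty})[p]$: precisely, there is an exact sequence
\begin{equation*}
0 \to E_{p^\infty}(K_\infty)/p \to H^1_{\mathrm{fl}}(K_\infty, E[p]) \to H^1_{\mathrm{fl}}(K_\infty, E_{p^\infty})[p] \to 0,
\end{equation*}
and similar local sequences at each place $w$ of $K_\infty$. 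Since $E$ is ordinary and $K_\infty/K$ is the (everywhere unramified) arithmetic $\Z_p$-extension, $E_{p^\infty}(K_\infty)$ is finite — this follows as in \cite[Lemma 3.2]{bl2} for the characteristic $\neq p$ case, with the characteristic $p$ analogue available from the cited results of Bandini--Longhi and Tan; likewise $E_{p^\infty}(K_{\infty,w})$ is finite for each $w$ (using the finiteness of torsion points over local fields, \cite[Theorem 4.12, \S 2.1.2]{bl1}, \cite[2.5.1]{t}). Therefore the transition maps between the mod-$p$ and $p$-divisible versions, both globally and at each of the finitely many places in $S$, have finite kernel and cokernel, uniformly.

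Next I would run the snake lemma on the resulting map of defining sequences for the two fine Selmer groups. The fine Selmer group $R^S(E[p]/K_\infty)$ is the kernel of the localization map $H^1_{\mathrm{fl}}(K_\infty,E[p]) \to \bigoplus_{v\in S} K^1_v(E[p]/K_\infty) \oplus \bigoplus_{v\notin S} J^1_v(E[p]/K_\infty)$, and similarly for $R^S(E/K_\infty)[p]$ with $E_{p^\infty}$ in place of $E[p]$; the map from the former to the latter fits into a commutative diagram whose vertical arrows on the cohomology term and on each $S$-local term have finite kernel and cokernel by the previous paragraph, and whose vertical arrows on the $v \notin S$ terms I would handle exactly as in the proof that the two definitions of the Selmer group agree (\cite[Proposition 2.4]{kt}), where for good-reduction places the unramified local condition behaves well under multiplication by $p$. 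The snake lemma then gives that $\ker$ and $\mathrm{coker}$ of $R^S(E[p]/K_\infty) \to R^S(E/K_\infty)[p]$ are finite, so one is finite if and only if the other is. Combining with the $\mu = 0$ criterion above completes the proof.

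The main obstacle I anticipate is the bookkeeping at the infinitely many places $v \notin S$: one must check that the local conditions $J^1_v(E[p]/K_\infty)$ and $J^1_v(E_{p^\infty}/K_\infty)[p]$ are genuinely compatible under the Kummer map, i.e.\ that the comparison does not introduce an infinite defect summed over all good places. This is precisely where the flat-cohomological input of \cite{kt} (the exact sequence $0 \to H^1_{\mathrm{fl}}(U,\mathcal E_{p^\infty}) \to H^1_{\mathrm{fl}}(K,E_{p^\infty}) \to \bigoplus_{v\in U} H^1_{\mathrm{fl}}(K_v,E_{p^\infty})/H^1_{\mathrm{fl}}(O_v,E_{p^\infty})$, and its mod-$p$ analogue using $\mathcal E[p]$) is used: rephrasing both fine Selmer groups via $H^1_{\mathrm{fl}}(U,-)$ as in Remark \ref{remark2.2} collapses the $v \notin S$ conditions into a single global integral cohomology group, after which only the finitely many local terms at $v \in S$ and the single global $H^1$ term need to be compared — and those were dealt with above. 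Everything else is routine diagram chasing.
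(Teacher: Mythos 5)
Your proposal is correct and follows essentially the same route as the paper: the paper's proof simply observes that the natural map $R^S(E[p]/K_\infty)\to R^S(E/K_\infty)[p]$ has finite kernel and cokernel and then invokes the standard $\mu$-invariant criterion for the finitely generated torsion $\Z_p[[\Gamma]]$-module $R^S(E/K_\infty)^\vee$, exactly as you do. Your write-up just supplies the details (Kummer sequence, finiteness of local and global torsion points, and the treatment of places outside $S$ via the flat-cohomological description) that the paper leaves as "easy to see."
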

\begin{proof}
	It is easy to see that the kernel and the cokernel of the natural map                    $R^S(E[p]/K_\infty) \rightarrow R^S(E/K_\infty)[p]$ are finite. The result follows from this. 
\end{proof}
\begin{proposition} \label{prop4.13}
Define $K_\infty^{p}: =K\bar{\FF}_p$ and recall $E/K$ is ordinary. Then  $R^S(E[p]/K_\infty^{p})$ is finite.
\end{proposition}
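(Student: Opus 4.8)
The plan is to compare the fine Selmer group of $E[p]$ over $K_\infty^p = K\bar{\FF}_p$ with the fine Selmer groups of the simpler group schemes $\mu_p$ and $\Z/p\Z$, exploiting the ordinarity of $E$. Since $E/K$ is ordinary, the connected-\'etale sequence of the finite flat group scheme $\mathcal{E}[p]$ over the places of good reduction, together with the global structure of $E[p]$ as a $G_K$-module, yields a filtration whose graded pieces are (twists of) $\mu_p$ and $\Z/p\Z$. Concretely, as $\FF$ is finite and $E$ is ordinary, after passing to $K_\infty^p$ (which contains $\bar\FF_p$, hence trivializes the Galois action on the constant field and on the \'etale quotient $\Z/p\Z$), $E[p]$ sits in an exact sequence $0 \to \mu_p \to E[p] \to \Z/p\Z \to 0$ of flat group schemes over $U$ (here I use that over $\bar\FF_p$ the \'etale quotient becomes constant). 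This induces a long exact sequence in flat cohomology compatible with the local conditions defining $R^S$, so it suffices to show $R^S(\mu_p/K_\infty^p)$ and $R^S(\Z/p\Z/K_\infty^p)$ are finite and then control the connecting maps.

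First I would treat $R^S(\Z/p\Z / K_\infty^p)$. By Definition \ref{definition4.4}, this is a subgroup of $H^1_{\text{fl}}(K_\infty^p, \Z/p\Z) = \Hom(G_S(K_\infty^p), \Z/p\Z)$ cut out by unramified/trivial-at-$S$ conditions, so it is dual to (a quotient of) the $p$-torsion of the $S$-ray class group, or rather the full ideal/divisor class group with the stringent conditions at $S$. Over $K_\infty^p = \bar\FF_p(t)$-type extension, the relevant class group is that of the constant field extension of $K$; classical results on divisor class groups of function fields over algebraically closed fields (the Jacobian of $C_K \otimes \bar\FF_p$ is a divisible group, so its $p$-torsion is finite of rank $2g$) give finiteness. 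Since $C_K = \mathbb{P}^1$ has genus $0$, in fact this group is finite (indeed essentially trivial up to the $S$-contribution, which is finite). Dually, $R^S(\mu_p/K_\infty^p)$ is governed by $(K_\infty^p)^\times / ((K_\infty^p)^\times)^p$ with conditions at $S$, i.e. by the $p$-torsion of the Picard/divisor class group again via Kummer theory, and the same genus-$0$ and algebraically-closed-constant-field input makes it finite. This is where I invoke "the classical results on the divisor class group over the function field" mentioned in the introduction.

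Next I would run the long exact cohomology sequence attached to $0 \to \mu_p \to E[p] \to \Z/p\Z \to 0$, checking that the localization maps defining $R^S$ are compatible with it (this uses that the connected-\'etale filtration of $\mathcal{E}[p]$ over $O_w$ for $w \in U$ matches $\mu_p \hookrightarrow \mathcal{E}[p]$, so that $H^1_{\text{fl}}(O_w, -)$ behaves well in the sequence). One gets an exact sequence $R^S(\mu_p/K_\infty^p) \to R^S(E[p]/K_\infty^p) \to R^S(\Z/p\Z/K_\infty^p)$, possibly with correction terms coming from $H^0$ of the pieces and from the failure of exactness of the local conditions, all of which are finite (the $H^0$ terms are finite because $\mu_p(K_\infty^p)$ and $(\Z/p\Z)(K_\infty^p)$ are finite, being $\FF_p$ and $\mu_p$ respectively). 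Finiteness of the two outer terms then forces finiteness of $R^S(E[p]/K_\infty^p)$.

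\textbf{Main obstacle.} The delicate point is the compatibility of the local conditions under the connected-\'etale sequence: the subgroups $H^1_{\text{fl}}(O_w, \mathcal{E}[p])$, $H^1_{\text{fl}}(O_w,\mu_p)$, $H^1_{\text{fl}}(O_w,\Z/p\Z)$ appearing in the definitions of $J^1_v$ must be shown to form a short exact sequence (or at least a sequence with finite defect) over the integral models, which requires knowing that $\mathcal{E}[p]$ over $O_w$ fits in $0 \to \mu_p \to \mathcal{E}[p] \to \Z/p\Z \to 0$ for $w$ of good ordinary reduction --- this is exactly the ordinarity hypothesis --- and a separate, easier argument at the finitely many bad primes in $S$, where the local term is the unrestricted $K^1_v$ and exactness is automatic. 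A secondary technical point is making precise the descent of the extension $0 \to \mu_p \to E[p] \to \Z/p\Z \to 0$ to $K_\infty^p$ itself (as opposed to only over $U$): one needs $\bar\FF_p \subset K_\infty^p$ so the \'etale quotient is a constant group scheme, and one must be careful that this sequence is one of fppf sheaves, not just of Galois modules, since $\mu_p$ is not \'etale in characteristic $p$. Once these bookkeeping issues are settled, the finiteness is immediate from the genus-$0$, algebraically-closed-residue-field inputs.
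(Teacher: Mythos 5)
Your proposal follows essentially the same route as the paper's proof: use ordinarity to write the connected--étale sequence $0\to\mu_p\to E[p]\to\Z/p\Z\to 0$, reduce to finiteness of $R^S(\mu_p/K_\infty^{p})$ and $R^S\big((\Z/p\Z)/K_\infty^{p}\big)$ via the divisor class group of the constant-field extension, match the local conditions at good places through the connected--étale sequence of $\mathcal E[p]$ over $O_w$ (with the unrestricted condition at $S$), and absorb the finite defects by a diagram chase. The paper makes your "correction terms" step explicit with a snake-lemma argument on a commutative diagram of (not necessarily exact) complexes, but the substance is the same.
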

\begin{proof}

	Consider the connected-étale sequence (see, for example,  \cite[\textsection 3.2]{llstt})
	\begin{small}
	\begin{equation}\label{equn4.11}
		0\lra E[p]^0\lra E[p]\lra \pi_0(E[p])\lra 0
	\end{equation}
\end{small}
	where $E[p]^0$ and $\pi_0(E[p])$ are Cartier dual to each other.

    { For a finite   Galois extension $M$ of $K$, put $G_M:=\gal(\bar{M}/M)$, where $\bar{M}$ is the separable closure of $M$.}
    Let $L$ be a finite Galois extension of $K$ such that the action of $G_L$  on $\pi_0(E[p])$ is trivial.
 Note that $E/L$ is ordinary. Therefore, we know that $\pi_0(E[p])\cong \Z/p\Z$ and $E[p]^0\cong \mu_p$, where $\mu_p$ is the Cartier dual to $\Z/p\Z$.

    {Let  $L_\infty^{p}: =L\bar{\FF}_p$. }
	Hence, we have the following exact sequences,
	\begin{small}
	\begin{equation}\label{equation29}
		H^1_{\fla}(L_\infty^{p},\mu_p)\lra H^1_{\fla}(L_\infty^{p},E[p])\lra H^1_{\fla}(L_\infty^{p},\Z/p\Z).
	\end{equation}
\end{small}
	Also for each prime $w$ of $K_\infty$,
	\begin{equation}\label{equation30}
		H^1_{\fla}(L_{\infty,w}^{p},\mu_p)\lra H^1_{\fla}(L_{\infty,w}^{p},E[p])\lra H^1_{\fla}(L_{\infty,w}^{p},\Z/p\Z).
	\end{equation}
	By \cite[\textsection 1.4]{ta}, for $w$ dividing $v$ of $L$, {$v\nmid S$}, we have the following exact sequence:
	\begin{small}
	\begin{equation}
		0\lra \mathcal E(O_w)[p]^0\lra \mathcal E(O_w)[p]\lra \pi_0(\mathcal E(O_w)[p])\lra 0,
	\end{equation}
\end{small}
	where $\mathcal E(O_w)[p]^0\cong \mu_p $ and $ \pi_0(\mathcal E(O_w)[p])\cong \Z/p\Z$. 
	
	From the definition in \eqref{equation27}, we have the following commutative diagram of complexes, which is not necessarily exact:
	\begin{small}
		\begin{equation}\label{equation34}
			\begin{tikzcd}
				0\arrow{r}
				&R^S\big((\Z/p\Z)/L_\infty^{p} \big)\arrow{r}
				&H^1_{fl}(L_\infty^{p}, \Z/pZ)\arrow{r}
				&\underset{w\mid v, v \in S} \prod H^1_{\fla}(L_{\infty,w}^{p},\Z/p\Z) \underset{w\mid v, v \notin S}\prod \frac {H^1_{\fla}(L_{\infty,w}^{p},\Z/p\Z)} {H^1_{\fla}(O_{\infty,w},\Z/p\Z)}
				\\
				0\arrow{r}
				&R^S(E[p]/L_\infty^{p})\arrow{r}\arrow{u}{\alpha}
				&H^1_{fl}(L_\infty^{p}, E[p]) \arrow{r}\arrow{u}
				&\underset{w\mid v, v \in S}\prod H^1_{\fla}(L_{\infty,w}^{p},E[p])\underset{w\mid v, v \notin S}\prod \frac {H^1_{\fla}(L_{\infty,w}^{p},E[p])} {H^1_{\fla}(O_{\infty, w},\mathcal E[p])}\arrow{u}
				\\
				0\arrow{r}
				&	R^S(\mu_p/L_\infty^{p})\arrow{r}\arrow{u}{\beta}
				& H^1_{\fla}(L_\infty^{p},\mu_p)	\arrow{r}\arrow{u}
				&\underset{w\mid v, v \in S} \prod H^1_{\fla}(L_{\infty,w}^{p},\mu_p) \underset{w\mid v, v \notin S}\prod \frac {H^1_{\fla}(L_{\infty,w}^{p},\mu_p)} {H^1_{\fla}(O_{\infty, w},\mu_p)}\arrow{u}{\gamma=\underset{w}\prod\gamma_w}
			\end{tikzcd}
		\end{equation}
	\end{small}
	Note that 	$R^S(\mu_p/ L_\infty^{p})\inj S'(\mu_p/L_\infty^{p})$ and ${R^S((\Z/p\Z)/ L_\infty^p)}\inj S'\big((\Z/p\Z)/L_\infty^{p}\big) $, where $S'\big(-/L_\infty^{p}\big) $ is defined in \eqref{s'defn}. By the proof of \cite[Lemma 3.4]{ot}, we deduce that $S'\big((\Z/p\Z)/L_\infty^{p}\big)$ is finite and  $S'(\mu_p/L_\infty^{p})\cong Cl(L_\infty^{p})[p]$, the $p$-part of the divisor class group, which is also finite \cite[Proposition 11.16]{ro}. (Notice that for $*\in\{\mu_p,\ \Z/p\Z\}$, $S'(*/L_\infty^p)$ is being denoted by $H^1_{fl}(C_{L_\infty^p},*) $ in \cite{ot}). Hence, $R^S(\mu_p, L_\infty^{p})$ and ${R^S((\Z/p\Z)/ L_\infty^p)}$ are finite. 
	Now, using \cite[\S III.7]{m1}, we get that  for $w\mid v, v\notin S$, $\ker(\gamma_w)=0$. Also, for $w\mid v, v\in S$, $\ker(\gamma_w)$ is finite. This implies that $\ker(\gamma)$ is finite. By applying a snake lemma to the lower complex in \eqref{equation34}, we obtain a map from  $\ker(\gamma)\overset{\theta}\lra \text{coker}(\beta)$, such that coker$(\theta)$ injects into the finite group $R^S\big((\Z/p\Z)/L_\infty^{p}   \big)$.
	Therefore,  $R^S(E[p]/L_\infty^{p}) $ is finite, { which in turn shows that $R^S(E[p]/K_\infty^{p}) $ is finite as well. }
\end{proof}

As $G=\gal(K_\infty^p/K_\infty)\cong \underset{l\neq p}\prod \Z_l$,  we get $R^S(E[p]/K_\infty^p)^{G}\cong R^S(E[p]/K_\infty) $. Thus using Lemma \ref{lemma4.10} and Proposition \ref{prop4.13}, we
deduce the following theorem:
\begin{theorem}\label{theo:muinvariant}
	Let $E/K$ be an  ordinary elliptic curve and  $S$ be as defined in \S \ref{section5}.  Then, $R^S(E/K_\infty)^\vee$ is a finitely generated $\Z_p$ module. \qed
\end{theorem}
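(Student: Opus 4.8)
The plan is to deduce Theorem \ref{theo:muinvariant} directly from the two ingredients already assembled just before its statement, namely Lemma \ref{lemma4.10} and Proposition \ref{prop4.13}, together with the observation about the fixed points under the prime-to-$p$ Galois group. First I would recall that by \cite[Theorem 1.7]{ot}, $S(E/K_\infty)^\vee$, and hence its quotient $R^S(E/K_\infty)^\vee$, is a finitely generated torsion $\Z_p[[\Gamma]]$-module, where $\Gamma = \gal(K_\infty/K) \cong \Z_p$. For such a module, being finitely generated over $\Z_p$ is equivalent to the vanishing of its Iwasawa $\mu$-invariant. So the theorem is reduced to showing $\mu\big(R^S(E/K_\infty)^\vee\big) = 0$.

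Next I would invoke Lemma \ref{lemma4.10}, which says $\mu\big(R^S(E/K_\infty)^\vee\big) = 0$ if and only if $R^S(E[p]/K_\infty)$ is finite. Thus it remains to prove the finiteness of $R^S(E[p]/K_\infty)$. Here I would use that $G := \gal(K_\infty^p/K_\infty) \cong \prod_{\ell \neq p} \Z_\ell$, so that taking $G$-invariants is exact on $p$-primary groups (the order of $G$ is prime to $p$, in the relevant profinite sense), and therefore $R^S(E[p]/K_\infty^p)^G \cong R^S(E[p]/K_\infty)$. Since $R^S(E[p]/K_\infty^p)$ is finite by Proposition \ref{prop4.13}, its $G$-invariant subgroup $R^S(E[p]/K_\infty)$ is finite as well. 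Combining, $R^S(E/K_\infty)^\vee$ is a finitely generated torsion $\Z_p[[\Gamma]]$-module with vanishing $\mu$-invariant, hence finitely generated over $\Z_p$.

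Since all the hard work has been done in Proposition \ref{prop4.13} (the comparison of the fine Selmer group of $E[p]$ with those of $\mu_p$ and $\Z/p\Z$ via the connected-étale sequence, and the classical finiteness of the $p$-part of the divisor class group of $K_\infty^p$) and in Lemma \ref{lemma4.10} (the standard control-by-$p$ argument), there is essentially no obstacle remaining: the proof of the theorem itself is just the assembly of these facts, and can be stated in a single sentence, as indicated by the \qed already appearing after the statement in the excerpt. If any subtlety arises, it would be in making precise that $G$-invariants do not enlarge a finite module — but this is immediate since $R^S(E[p]/K_\infty)$ embeds into the finite group $R^S(E[p]/K_\infty^p)$ via restriction (with kernel controlled by $H^1(G, E[p](K_\infty^p))$, which is trivial as $G$ has no $p$-part acting on a $p$-group, or simply because restriction to a subgroup-fixed-field on a Galois-cohomology-defined subgroup is injective on the relevant pieces).
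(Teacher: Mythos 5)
Your proposal is correct and follows essentially the same route as the paper: the paper also cites \cite[Theorem 1.7]{ot} for $\Z_p[[\Gamma]]$-torsionness, reduces via Lemma \ref{lemma4.10} to the finiteness of $R^S(E[p]/K_\infty)$, and obtains this from Proposition \ref{prop4.13} together with the isomorphism $R^S(E[p]/K_\infty^p)^{G}\cong R^S(E[p]/K_\infty)$ for $G=\gal(K_\infty^p/K_\infty)\cong\prod_{\ell\neq p}\Z_\ell$ of order prime to $p$. Your extra remark justifying that taking $G$-invariants preserves finiteness is a harmless elaboration of the same step.
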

 \subsection{Pseudonullity}\label{sec4.2} Now we discuss an analogue of Conjecture \ref{conjb} over the extension $F_\infty:=K_\infty  \widetilde{K}_{1}$ of $K$. Put $G$=$\mathrm{Gal}(F_\infty/K) \cong \Z_p^2$ and $H:$=$\mathrm{Gal}(F_\infty/K_\infty) \cong \Z_p$. We 
will show, under suitable assumption $R^S(E/F_\infty)^\vee$ is a pseudonull $\Z_p[[G]]$ module. We begin by collecting some evidence towards this. At first, in Proposition \ref{theo5.9}, we show that the corank of $R^S(E/K)$ is strictly less than the corank of $S(E/K)$, whenever $E(K)$ is infinite. 
\begin{proposition}\label{theo5.9}
	Let $E/K$ be an elliptic curve with $\ran_{\Z} (E(K)) \geq1$. Assume that there exists   $v\in S$, where $E$ has good ordinary reduction or split multiplicative reduction. Then, $\cora_{\Z_p}(R^S(E/K))< \cora_{\Z_p}(S(E/K))$.
\end{proposition}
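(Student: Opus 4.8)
The strategy is to exploit the exact sequence
\begin{small}
\begin{equation*}
0\lra R^S(E/K)\lra S(E/K)\overset{\rho}\lra \underset{w\mid v,\,v\in S}\bigoplus E(K_w)\otimes \frac{\Q_p}{\Z_p}
\end{equation*}
\end{small}
coming from \eqref{definition4.1}, and to show that the localization map $\rho$ is nonzero on the divisible part, in fact that its image has positive $\Z_p$-corank. Taking $\Q_p/\Z_p$-coranks in the sequence, $\cora_{\Z_p}(S(E/K)) = \cora_{\Z_p}(R^S(E/K)) + \cora_{\Z_p}(\im\rho)$, so it suffices to produce one place $v\in S$ at which the local term $\bigoplus_{w\mid v} E(K_w)\otimes \Q_p/\Z_p$ has positive corank and at which the restriction of $\rho$ to the divisible part is surjective onto that local term (or at least hits a positive-corank piece). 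First I would fix the place $v\in S$ at which $E$ has good ordinary or split multiplicative reduction. At such a place, the local module $E(K_w)\otimes\Q_p/\Z_p$ is infinitely divisible with $\Z_p$-corank equal to $[K_w:\FF((t))]$-times the dimension of the formal group part, which is $1$ in the ordinary case (via the formal group $\widehat{E}(\mathfrak m_w)\cong \widehat{\mathbb G}_m$-type behaviour, cf.\ \cite[\S 2.1.2]{bl1}, \cite[2.5.1]{t}) and also positive in the split multiplicative case (where $E(K_w)\cong K_w^\times/q^{\Z}$). So $\cora_{\Z_p}$ of the $v$-local term is $\geq 1$.

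The key step is then to show that a global point of infinite order in $E(K)$, which exists since $\ran_{\Z}E(K)\geq 1$, maps to an element of infinite order in $E(K_v)\otimes\Q_p/\Z_p$, and more precisely that the image of the global Kummer map $E(K)\otimes\Q_p/\Z_p \to S(E/K)$ composed with $\rho$ already has positive corank. Here one uses that the global-to-local restriction $E(K)\otimes\Q_p/\Z_p \to E(K_v)\otimes\Q_p/\Z_p$ is injective on a finite-index subgroup of $E(K)\otimes\Q_p/\Z_p$: a nonzero point $P\in E(K)$ of infinite order remains of infinite order in $E(K_v)$ since $E(K_v)$ contains $E(K)$, and tensoring with $\Q_p/\Z_p$ a rank-$\geq 1$ free part survives because $E(K_v)$ modulo torsion is a torsion-free $\Z_p$-module of positive rank containing $P$. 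Thus $\cora_{\Z_p}(\im\rho)\geq 1$, giving the strict inequality. I would phrase this cleanly via the commutative square relating $E(K)\otimes\Q_p/\Z_p \hookrightarrow S(E/K)$ and $E(K_v)\otimes\Q_p/\Z_p \hookrightarrow S(E/K_v)$-type local terms, using that the composite $E(K)\otimes\Q_p/\Z_p \to S(E/K)\overset{\rho}\to E(K_v)\otimes\Q_p/\Z_p$ is just the natural localization map.

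The main obstacle I anticipate is handling the split multiplicative case carefully: there $\mu_{p^\infty}\subset E(K_v)$ via Tate uniformization, so $E(K_v)\otimes\Q_p/\Z_p$ has a contribution one must check does not cause the global point's image to be torsion; one must verify that the image of a global point of infinite order is still non-divisible in the local group, which requires knowing the valuation of the global point's image in the Tate parameter is not annihilated. A clean way around this is to note it suffices to work with a single place $v$ and that at a place of good ordinary reduction (if one exists in $S$) the argument is cleanest; if the only available $v\in S$ with the hypothesis is split multiplicative, one invokes the structure $E(K_v)\cong K_v^\times/q_E^{\Z}$ with $\mathrm{ord}_v(q_E)>0$ and observes that $E(K_v)\otimes\Q_p/\Z_p$ surjects onto $(K_v^\times)\otimes\Q_p/\Z_p$ modulo a term of bounded order, so still has positive corank, and the global point maps nontrivially. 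Once the positive-corank lower bound on $\im\rho$ is established, the strict inequality $\cora_{\Z_p}(R^S(E/K))<\cora_{\Z_p}(S(E/K))$ is immediate from additivity of coranks along the short exact sequence, and the proof is complete.
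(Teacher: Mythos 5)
Your overall route coincides with the paper's: use the exact sequence $0\to R^S(E/K)\to S(E/K)\to\bigoplus_{v\in S}\,E(K_v)\otimes\Q_p/\Z_p$ and show that the divisible subgroup $E(K)\otimes\Q_p/\Z_p\subset S(E/K)$ has nonzero image under localization at the distinguished place $v$. The gap is in the one step that carries all the content. You assert that a point $P$ of infinite order gives a nonzero class in $E(K_v)\otimes\Q_p/\Z_p$ ``because $E(K_v)$ modulo torsion is a torsion-free $\Z_p$-module of positive rank containing $P$''. Torsion-freeness is not enough: $P\otimes p^{-k}$ vanishes for all $k$ exactly when $P$ is infinitely $p$-divisible in $E(K_v)$ modulo torsion, and a torsion-free $\Z_p$-module (e.g.\ $\Q_p$) can perfectly well contain nonzero infinitely divisible elements. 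What has to be proved --- and this is precisely where the hypothesis on the reduction type at $v$ enters --- is that $E(K_v)$ has, up to finite index and torsion, no infinitely $p$-divisible elements. The paper does this by passing to an explicit finite-index subgroup: in the split multiplicative case $E_0(K_v)\cong O_v^{\times}\cong \Z_p^{\mathbb N}\oplus\Z/(p^r-1)\Z$, and in the good ordinary case the formal group at $v$ is a torsion-free pro-$p$ group, hence a free $\Z_p$-module; neither contains a nonzero infinitely $p$-divisible element of infinite order. Your proposal never supplies such a structural input; in the split multiplicative case you even flag the difficulty (``requires knowing the valuation of the global point's image in the Tate parameter is not annihilated'') and then close it by assertion (``the global point maps nontrivially'').

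A secondary issue is that your description of the local term is modeled on the characteristic-zero picture. Here $K_v$ is a local field of characteristic $p$, so $1+\mathfrak m_v\cong\Z_p^{\mathbb N}$ and the formal group has infinite $\Z_p$-rank; the corank of $E(K_v)\otimes\Q_p/\Z_p$ is not $[K_w:\FF((t))]$ times a formal-group dimension, and the $\widehat{E}(\mathfrak m_w)\cong\widehat{\mathbb G}_m$-type count giving corank $1$ does not apply. This does not affect what you actually need (the local term does have positive corank, and in any case only nonvanishing of the image of the divisible part is required, since that image is divisible), but it is a symptom of the real problem: the argument as written borrows number-field intuition, whereas the proof genuinely rests on the characteristic-$p$ local structure of $E(K_v)$ at an ordinary or split multiplicative place.
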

\begin{proof}
	We generalise the proof of \cite[Lemma 4.1]{cs} from the number field case to the function field (of char. $p$) case.  
	Recall the following short  exact sequence,\noindent
\begin{small}{
	\begin{equation}
		0 \lra R^S(E/K) \lra S(E/K)\overset{\underset{v\in S}\oplus r_v} \lra  \underset{v\in S}{\bigoplus} E(K_v)\otimes \Q_p/\Z_p.
	\end{equation}}
\end{small}
So, it is enough to prove that $\text{image} (\underset{v\in S}\oplus r_v)$ is infinite. Note that $B=E(K)\otimes \Q_p/\Z_p\inj S(E/K)$. Since $E(K)$ is finitely generated,  $B\cong (\Q_p/\Z_p)^{\ran(E(K))}$ is divisible. Thus it is enough to show that $r_{v}(B)\neq 0$, for some $v\in S$.
	On the contrary, let us assume that $r_{v}(B)= 0$ for every $v\in S$ and let $R$ be a point of infinite order in $E(K)$. Then for every $k\geq 1$, $R\otimes p^{-k}=0$ in $E(K_v)\otimes \Q_p/\Z_p$.

	Let us consider the case where $E$  has a split multiplicative reduction at $v\in S$. Let $E_0(K_v)$ be the set of points of $E(K_v)$ which has non-singular reduction at the residue field of $K_v$. We can choose an integer $n\neq 0$ such that $Q=nR$ is a point on $E_0(K_{v})$. Then, for every $k\geq 1$, $Q\otimes p^{-k}=0$ in $E_0(K_v)\otimes \Q_p/\Z_p$.  Note that $E_0(K_{v})\cong O_v^*$ \cite[\S 2.1.2]{bl1}, where $O_v$ is the ring of integers of $K_{v}$. This is a contradiction since $O_v^*\cong \Z_p^\N\oplus \Z/(p^r-1)\Z $, where $p^r=\#\FF$. 
	
	Next, let us assume that $v$ is a prime of good ordinary reduction. We can choose an integer $n\neq 0$ such that $Q=nR$ is a point on $E_{v}(m_{v})^\vee$, the formal group of $E$ at $v$. Note that $E_{v}(m_{v})^\vee $  is a torsion free $\Z_p$ module  \cite[Lemma 2.5.1]{t}. Being a finite index subgroup of the maximal pro-$p$ subgroup of $E(K_{v})$, $E_{v}(m_{v})^\vee $ is a pro-$p$ group \cite[page 4436]{t}. By \cite[Theorem 4.3.4]{rz}, we know that a torsion free pro-$p$ group is a free $\Z_p$ module. Therefore, $E_{v}(m_{v})^\vee$ is a free $\Z_p$ module and it cannot have a infinite $p$-divisible element.
\end{proof}
\noindent From Proposition \ref{theo5.9}, we get an evidence for Conjecture B over $\Z_p^d$-extension.
\begin{corollary}\label{theo4.8}
Let us keep the  hypotheses of  Proposition \ref{theo5.9} and assume that $j(E)\notin \FF$. Let $K \subset L_\infty \subset K_S$ be   such that $\gal(L_\infty/K)\cong \Z_p^d$. Then $S(E/L_\infty) \neq R^S(E/L_\infty)$.
\end{corollary}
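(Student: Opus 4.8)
\textbf{Proof proposal for Corollary \ref{theo4.8}.}

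The plan is to deduce this from Proposition \ref{theo5.9} together with a corank/control argument passing from $K$ up to $L_\infty$. First I would observe that it suffices to exhibit a single prime $v\in S$ and a single finite layer of $L_\infty/K$ at which the local restriction map on the $\Z_p$-cotorsion-free part (i.e.\ on a divisible point) is nonzero; equivalently, it suffices to show that the natural localisation map defining $R^S(E/L_\infty)$ inside $S(E/L_\infty)$ is not an isomorphism. The cleanest route is to compare coranks: I would argue that $\corank_{\Z_p[[G]]}\big(R^S(E/L_\infty)^\vee\big) < \corank_{\Z_p[[G]]}\big(S(E/L_\infty)^\vee\big)$, or, failing a clean $\Z_p[[G]]$-rank statement, that the two fail to be equal already after base change to $K$, using a control theorem for $\gal(L_\infty/K)\cong\Z_p^d$.

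In more detail: by Proposition \ref{thm:control}-style control theorems (the control theorem for $S(E/K)\to S(E/L_\infty)^G$ of \cite[Theorem 4.4]{bl1}, and its $R^S$-analogue used in the proof of Proposition \ref{thm:control}), the natural maps $S(E/K)\to S(E/L_\infty)^G$ and $R^S(E/K)\to R^S(E/L_\infty)^G$ have finite kernel and cokernel. Hence
\[
\corank_{\Z_p}\big(S(E/L_\infty)^G\big)=\corank_{\Z_p}\big(S(E/K)\big),\qquad
\corank_{\Z_p}\big(R^S(E/L_\infty)^G\big)=\corank_{\Z_p}\big(R^S(E/K)\big).
\]
By Proposition \ref{theo5.9}, the right-hand sides satisfy $\corank_{\Z_p}(R^S(E/K))<\corank_{\Z_p}(S(E/K))$. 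Therefore $\corank_{\Z_p}\big(R^S(E/L_\infty)^G\big)<\corank_{\Z_p}\big(S(E/L_\infty)^G\big)$, so in particular $R^S(E/L_\infty)^G\neq S(E/L_\infty)^G$, and a fortiori $R^S(E/L_\infty)\neq S(E/L_\infty)$. (The hypothesis $j(E)\notin\FF$ enters because it is needed to invoke the control theorem for the fine Selmer group in this characteristic-$p$ setting, exactly as in the proof of Proposition \ref{thm:control} and of Theorem \ref{theo:muinvariant}; it guarantees the relevant local and global $E_{p^\infty}$-invariants over $L_\infty$ are finite.)

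The main obstacle I anticipate is making the control theorem for $R^S$ over a $\Z_p^d$-extension genuinely rigorous when $d>1$: one must check that all local terms $K^1_v(E/L_\infty)^G$ for $v\in S$ and $J^1_v(E/L_\infty)^G$ for $v\notin S$ have the expected behaviour, i.e.\ that $E_{p^\infty}(L_{\infty,w})$ is finite for every $w$ and that the relevant $H^1(\gal(L_{\infty,w}/K_v),-)$ terms are finite. For the arithmetic and geometric $\Z_p$-layers this follows from \cite{bl2} and \cite{t}, and the composite $\Z_p^2$-case from \cite{bl1}; for a general $\Z_p^d$ one reduces to these by dévissage along a filtration $1=G_0\subset G_1\subset\cdots\subset G_d=G$ with $G_i/G_{i-1}\cong\Z_p$, applying the one-variable control theorem at each step. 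Once the coranks are controlled, the strict inequality from Proposition \ref{theo5.9} does all the real work, and the conclusion $S(E/L_\infty)\neq R^S(E/L_\infty)$ is immediate.
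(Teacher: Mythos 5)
Your proposal is correct in substance and follows the same overall strategy the paper indicates: Proposition \ref{theo5.9} at the base field plus a control-theorem argument to pass to $L_\infty$ (the paper's proof is a one-line sketch referring to the method of \cite[Proposition 4.3]{cs}). The difference is in the bookkeeping. The route suggested by the paper tracks the quotient $S/R^S$ directly: one takes a point of infinite order in $E(K)$, notes (as in the proof of Proposition \ref{theo5.9}) that its image in $E(K_w)\otimes \Q_p/\Z_p$ at the chosen good ordinary or split multiplicative prime is a nonzero divisible subgroup, and then shows this image survives in $E(L_{\infty,w})\otimes \Q_p/\Z_p$ because the kernel of the local restriction map is small; this only requires local input at that one prime. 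You instead invoke two full control theorems and compare $\Z_p$-coranks of $G$-invariants, which needs a finite kernel for the $S$-control map and a finite cokernel for the $R^S$-control map over a general $\Z_p^d$-extension; these are somewhat stronger inputs (in particular the cokernel finiteness for the fine Selmer control map hides the local finiteness of the $H^1(\gal(L_{\infty,w}/K_v), E_{p^\infty}(L_{\infty,w}))$ terms at all $v\in S$, not just the distinguished prime), though they are exactly the statements the paper itself invokes in Proposition \ref{thm:control} via \cite[Theorem 4.4]{bl1}, so your appeal to them is consistent with the paper's conventions. Two small points: you only need the one-sided consequences (finite kernel for $S$ gives $\corank_{\Z_p}S(E/L_\infty)^G\geq \corank_{\Z_p}S(E/K)$, finite cokernel for $R^S$ gives $\corank_{\Z_p}R^S(E/L_\infty)^G\leq \corank_{\Z_p}R^S(E/K)$), not the claimed equalities; and your d\'evissage remark for general $d$ is the right way to reduce to the quoted one- and two-variable control results.
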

\begin{proof}
	We use Proposition \ref{theo5.9} and using a control theorem, proceed in a 
	similar way,  as in the proof of  \cite[Proposition 4.3]{cs}. The details are omitted.
\end{proof}
\noindent For an algebraic extension $L/K$, let $Cl(L)$ denote the divisor class group of $L$. 

\begin{proposition}\label{theorem4.14}
	Recall $F_\infty=K_\infty  \widetilde{K}_1$ and  $H=\gal(F_\infty/ \widetilde{K}_1) \cong \Z_p$. Then, $Cl(F_\infty)[p^\infty]^{\vee}$ is a  finitely generated torsion $\Z_p[[H]]$ module.
\end{proposition}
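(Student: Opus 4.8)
The plan is to leverage the analogue of Iwasawa's $\mu=0$ theorem for function fields together with a descent argument along $H$. First I would recall the classical picture over the arithmetic $\Z_p$-extension $K_\infty = K\FF_p^{(p)}$: by the function-field analogue of the Ferrero--Washington theorem (see \cite[Proposition 11.16]{ro}), $Cl(K_\infty)[p^\infty]$ is finite, and more generally for any finite extension $L/K$ inside $F_\infty$ the group $Cl(L_\infty K_\infty)[p^\infty]$ is finite with rank bounded independently of $L$. The extension $F_\infty/K_\infty'$ is a $\Z_p$-extension with group $H$, and $F_\infty = K_\infty' \cdot K_\infty$ where $K_\infty/K$ is the (everywhere unramified) arithmetic extension; thus $F_\infty/K_\infty'$ is the base change of $K_\infty/K$ and is again unramified everywhere over $K_\infty'$.

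The key step is a control-type statement: I would show that the natural (corestriction/transfer) maps relating $Cl(K_\infty')[p^\infty]$ and $Cl(F_\infty)[p^\infty]^H$ have finite kernel and cokernel, so that $\big(Cl(F_\infty)[p^\infty]^\vee\big)_H$ is finitely generated over $\Z_p$ — equivalently $Cl(F_\infty)[p^\infty]^H$ is cofinitely generated over $\Z_p$. For an everywhere-unramified $\Z_p$-extension the relevant genus-theory terms vanish or are finite (the ramification contribution is trivial since $F_\infty/K_\infty'$ is unramified), so standard Iwasawa descent for class groups in $\Z_p$-extensions of global function fields applies; the finiteness of the local norm-index terms uses that only finitely many primes of $K_\infty'$ split in a controlled way. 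Once $Cl(F_\infty)[p^\infty]^H$ is cofinitely generated over $\Z_p$, Nakayama's lemma (applied to the compact $\Z_p[[H]]$-module $Cl(F_\infty)[p^\infty]^\vee$, noting $\Z_p[[H]] \cong \Z_p[[X]]$ is local Noetherian) gives that $Cl(F_\infty)[p^\infty]^\vee$ is finitely generated over $\Z_p[[H]]$. That it is moreover $\Z_p[[H]]$-torsion follows because a finitely generated $\Z_p[[H]]$-module that is not torsion has a free rank-one summand up to pseudo-isomorphism, hence infinite $\Z_p$-corank; but $Cl(K_\infty')[p^\infty]$, and with it $Cl(F_\infty)[p^\infty]^H$, has finite $\Z_p$-corank (indeed the $\mu=0$ result over $K_\infty'$ forces $Cl(K_\infty')[p^\infty]$ to be finite), which is incompatible with positive $\Z_p[[H]]$-rank.

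Concretely, the steps in order: (1) identify $F_\infty/K_\infty'$ as an everywhere-unramified $\Z_p$-extension and record that every prime of $K_\infty'$ is either inert or splits into finitely many in $F_\infty$, with only finitely many nonsplit; (2) write down the descent exact sequence for the $p$-part of the divisor class group along this $\Z_p$-extension, following the function-field analogue of the standard number-field argument, and check that the error terms are finite using finiteness of $E_{p^\infty}$-type local invariants replaced here by finiteness of residue-field contributions and of $Cl(K_\infty')[p^\infty]$ (via \cite[Proposition 11.16]{ro}); (3) conclude $Cl(F_\infty)[p^\infty]^H$ is cofinitely generated over $\Z_p$; (4) apply topological Nakayama over $\Z_p[[H]]\cong\Z_p[[X]]$ to get finite generation of the Pontryagin dual as a $\Z_p[[H]]$-module; (5) deduce torsion-ness from the finite $\Z_p$-corank of $Cl(K_\infty')[p^\infty]$ and the structure theory of finitely generated $\Z_p[[X]]$-modules.

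\textbf{Main obstacle.} The hard part will be step (2): making the descent for the $p$-primary divisor class group rigorous in the function-field setting. Unlike number fields there is no archimedean place, the class group of the base is infinite (only its $p$-part after restricting to degree-zero divisors, or its prime-to-the-constant-field part, is finite), and one must be careful whether to work with $Cl^0$, with the full divisor class group, or with the Jacobian's $\FF$-points; controlling the behaviour of the constant-field extension $K_\infty$ versus the geometric extension $K_\infty'$ inside $F_\infty$, and showing the relevant Iwasawa error terms are genuinely finite (not merely torsion), is where the real work lies. A clean way around this may be to invoke directly the finiteness of $H^1_{fl}(C_{K_\infty'},\mu_p)$-type groups together with a Hochschild--Serre argument along $H$, mirroring the proof of Proposition \ref{prop4.13}, rather than redoing classical genus theory by hand.
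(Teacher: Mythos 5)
Your overall route is essentially the paper's: descend along the everywhere\--unramified constant\--field $\Z_p$\--extension $F_\infty/K_\infty'$, compare $Cl(F_\infty)[p^\infty]^H$ with $Cl(K_\infty')[p^\infty]$, and finish with Nakayama. In fact the ``clean way around'' you mention at the end is exactly what the paper does: it uses the Kummer\--type sequence $0\to \ilim\,\FF^\times/(\FF^\times)^{p^m}\to S'(\mu_{p^\infty}/K_\infty')\to Cl(K_\infty')[p^\infty]\to 0$ and its analogue over $F_\infty$, notes that the constant\--field terms vanish modulo $p$\--power powers, obtains surjectivity of $S'(\mu_{p^\infty}/K_\infty')\to S'(\mu_{p^\infty}/F_\infty)^H$ from $\mathrm{cd}_p(H)=1$ via Hochschild--Serre, identifies $S'(\mu_{p^\infty}/F_\infty)^H$ with $Cl(F_\infty)[p^\infty]^H$, and concludes that the latter is covered by the image of $Cl(K_\infty')[p^\infty]$, which is \emph{finite} by Aiba \cite[Proposition 2]{a}. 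So the step you defer as ``the main obstacle'' is precisely the content of the proof; as written, your proposal does not carry it out, and this is the principal gap.

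Two further concrete problems. First, your deduction of $\Z_p[[H]]$\--torsionness is a non sequitur: finite $\Z_p$\--corank of $Cl(F_\infty)[p^\infty]^H$ does not rule out positive $\Z_p[[H]]$\--rank of the dual --- for $M=\Z_p[[X]]$ one has $M/XM\cong\Z_p$, of finite rank, yet $M$ is free of rank one. What is needed (and what the paper proves) is that $Cl(F_\infty)[p^\infty]^H$ is \emph{finite}; then $M=Cl(F_\infty)[p^\infty]^\vee$ has finite coinvariants and is therefore finitely generated and torsion, e.g.\ by $\mathrm{rank}_{\Z_p[[H]]}(M)=\mathrm{rank}_{\Z_p}(M_H)-\mathrm{rank}_{\Z_p}(M^H)$. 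Your ingredients do give this finiteness once you combine the finiteness of $Cl(K_\infty')[p^\infty]$ with a control map having finite kernel and cokernel, so the repair is easy, but the argument as stated is wrong. Second, the finiteness input must be over the \emph{geometric} Carlitz extension $K_\infty'$, and that is Aiba's theorem \cite{a}; Rosen's \cite[Proposition 11.16]{ro}, which you cite, concerns constant\--field extensions, where $Cl(\cdot)[p^\infty]$ is in general only cofinitely generated (divisible of corank bounded by the Hasse--Witt invariant, cf.\ the proof of Lemma \ref{lemm4.12}), not finite. So your assertion that $Cl(K_\infty)[p^\infty]$ is finite is unjustified (though not actually needed), while the assertion you do need, finiteness of $Cl(K_\infty')[p^\infty]$, is stated without proof or an adequate reference.
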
 
\begin{proof}
 By Nakayama lemma, it suffices to show that $(Cl(F_{\infty})[p^\infty])^{H}$ is finite. 
 
 Using the commutative diagram in \cite[Page-38]{ot}, we obtain the following commutative diagram:	\noindent
	\begin{small}{
	\begin{equation}\label{equation35}
		\begin{tikzcd}
			0\arrow{r}
			&\left(\underset{m}\ilim\ \dfrac{(\FF^{(p)})^\times}{((\FF^{(p)})^\times)^{p^m}}\right)^{H}\arrow{r}
			&S'(\mu_{p^\infty}/F_\infty)^{H}\arrow{r}{\beta}
			&(Cl(F_{\infty})[p^\infty]) ^{H}
			\\
			0\arrow{r}
			&\underset{m}\ilim\ \dfrac{\FF^\times}{(\FF^\times)^{p^m}}\arrow{r}\arrow{u}
			&S'(\mu_{p^\infty}/ \widetilde{K}_1)\arrow{r}\arrow{u}{\alpha}
			&Cl( \widetilde{K}_1)[p^\infty]\arrow{r} \arrow{u}{\gamma}
			&0	
		\end{tikzcd}
	\end{equation}}
\end{small}
	\noindent Note that $\dfrac{(\FF^{(p)})^\times}{((\FF^{(p)})^\times)^{p^m}}=0$ for all $m$. Therefore, $\beta$ is an isomorphism.  Further, by \cite[Proposition 2]{a},  $Cl( \widetilde{K}_1)[p^\infty] $ is finite. We claim that $\text{coker}(\alpha)$ is finite. Then  using a snake lemma in diagram \ref{equation35}, $\ker(\gamma)$ is finite and the lemma follows. We now establish the claim.  Consider the  commutative diagram:
	\begin{small}
	\begin{equation}\label{diagram36}
		\begin{tikzcd}
			0\arrow{r}
			&S'(\mu_{p^\infty}/F_\infty)^{H}\arrow{r}
			& H^1_{\fla}(F_{\infty},\mu_{p^\infty})^{H}\arrow{r}
			&\left(\underset{w\mid v, v \in \Sigma_K}\oplus \frac {H^1_{\fla}(F_{\infty,w},\mu_{p^\infty})} {H^1_{\fla}(O_{F_{\infty, w}},\mu_{p^\infty})}\right)^H
			\\
			0\arrow{r}
			&S'(\mu_{p^\infty}/ \widetilde{K}_1)\arrow{u}{\alpha}\arrow{r}
			& H^1_{\fla}( \widetilde{K}_1,\mu_{p^\infty})\arrow{u}{\delta}\arrow{r}
			&\left(\underset{w\mid v, v \in \Sigma_K}\oplus \frac {H^1_{\fla}( \widetilde{K}_{1,w},\mu_{p^\infty})} {H^1_{\fla}(O_{ \widetilde{K}_{1,w} },\mu_{p^\infty})}\right)\arrow{u}{\eta}
		\end{tikzcd}
	\end{equation}
\end{small}
	Using the relation: $H^1_{\fla}(\text{Spec}(R), \mu_{p^n})=R^\times/(R^\times)^{p^n}$, for a local ring $R$ and the Hochschild-Serre spectral sequence, we obtain that $\ker(\eta)=0$. As $H\cong\Z_p$
, coker$(\delta)=0$. Then $\text{coker}(\alpha)=0$ follows from the diagram \eqref{diagram36}.\noindent \end{proof}
\begin{lemma}\label{lemma4.11}
 $R^S\big((\Z/p\Z)/F_\infty\big)$ is finite.
\end{lemma}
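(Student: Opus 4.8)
\textbf{Proof proposal for Lemma \ref{lemma4.11}.}

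The plan is to mimic the structure of the proof of Proposition \ref{prop4.13}, but working with the constant \'etale group scheme $\Z/p\Z$ over $F_\infty$ instead of $E[p]$ over $K_\infty^p$, and transferring finiteness from the ``smaller'' geometric–arithmetic field $F_\infty = K_\infty K'_\infty$ down via a control argument. The key input is that $F_\infty$ sits above both the arithmetic $\Z_p$ extension $K_\infty = K\FF_p^{(p)}$ and the geometric Carlitz $\Z_p$ extension $K'_\infty$; in particular $F_\infty$ contains the full constant field extension $K\bar\FF_p$ only after a further profinite-prime-to-$p$ step, so one cannot directly invoke the $K_\infty^p$ computation. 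Instead I would first recall from the proof of \cite[Lemma 3.4]{ot} that $S'\big((\Z/p\Z)/F_\infty\big) = H^1_{fl}(C_{F_\infty}, \Z/p\Z)$ is finite (this is the global cohomology of the proper smooth model and is controlled by the $p$-rank of $\pi_1$ of the curve, which remains bounded in the tower $F_\infty$ by the analogue of Iwasawa's $\mu = 0$ theorem \cite[Proposition 11.16]{ro}; alternatively one uses Proposition \ref{theorem4.14} together with the Kummer sequence relating $S'(\mu_{p^\infty})$ and the divisor class group).

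The second step is to compare $R^S\big((\Z/p\Z)/F_\infty\big)$ with $S'\big((\Z/p\Z)/F_\infty\big)$. By definition \eqref{s'defn} and \eqref{equation27}, these two groups differ only at the primes $v \in S$: for $v \notin S$ the local condition $J_v^1\big((\Z/p\Z)/F_\infty\big)$ is the same in both, while for $v\in S$ the group $R^S$ imposes the strictly larger local condition $K_v^1\big((\Z/p\Z)/F_\infty\big) \supset J_v^1\big((\Z/p\Z)/F_\infty\big)$. Hence $R^S\big((\Z/p\Z)/F_\infty\big)$ is a subgroup of $S'\big((\Z/p\Z)/F_\infty\big)$, and therefore finite. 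This is genuinely shorter than the argument in Proposition \ref{prop4.13} precisely because for the constant group scheme one does not need the connected–\'etale sequence \eqref{equn4.11}: there is no multiplicative part to handle separately, so no snake-lemma juggling between $R^S(\mu_p/-)$ and $R^S(\Z/p\Z/-)$ is needed.

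The main obstacle, and the place where care is required, is establishing the finiteness of $S'\big((\Z/p\Z)/F_\infty\big)$ itself over the \emph{infinite} extension $F_\infty$ rather than over a finite layer. One must check that the direct limit $\varinjlim_{K \subset L \subset F_\infty} H^1_{fl}(C_L, \Z/p\Z)$ stays finite; equivalently that the $p$-rank of $\mathrm{Cl}(L)[p]$ (or of the \'etale fundamental group mod $p$) is bounded independently of the layer $L$. For the arithmetic direction this is classical class field theory of function fields; for the geometric Carlitz direction the boundedness is \cite[Proposition 2]{a} (already used in Proposition \ref{theorem4.14}), and for the composite $\Z_p^2$-tower $F_\infty$ one combines these, exactly as in the proof of Proposition \ref{theorem4.14}: the $H$-coinvariants (or invariants) are finite, and Nakayama — or rather the fact that $\Z/p\Z$ is killed by $p$ so no Nakayama is even needed, just finite generation over $\Z_p[[H]]/p = \FF_p[[H]]$ — forces finiteness of the whole thing. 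I would phrase the write-up so that the heavy lifting is cited from \cite{ot}, \cite{a}, \cite{ro} and Proposition \ref{theorem4.14}, leaving only the elementary observation that $R^S \subset S'$ to be checked directly.
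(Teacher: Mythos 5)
Your reduction $R^S\big((\Z/p\Z)/F_\infty\big)\subseteq S'\big((\Z/p\Z)/F_\infty\big)$ is correct (and harmless), but it shifts all the weight onto the finiteness of $S'\big((\Z/p\Z)/F_\infty\big)$, and your argument for that does not go through. The inputs you cite each control only one direction of the tower: the proof of \cite[Lemma 3.4]{ot} concerns the base change $K\bar{\FF}_p$ (algebraically closed constant field), \cite[Proposition 11.16]{ro} concerns the constant-field (arithmetic) direction, and \cite[Proposition 2]{a} concerns $K_\infty'$ over $K$; none of them bounds everywhere-unramified $p$-extensions over the composite $\Z_p^2$-tower $F_\infty$. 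Your proposed gluing step --- ``the $H$-(co)invariants are finite, and finite generation over $\Z_p[[H]]/p=\FF_p[[H]]$ forces finiteness'' --- is false as an algebraic principle: $\FF_p[[H]]\cong\FF_p[[T]]$ is a discrete valuation ring with \emph{finite} residue field, so a finitely generated module with finite coinvariants (for instance $\FF_p[[T]]$ itself, or dually a discrete module with finite $H$-invariants such as $\FF_p[[T]]^\vee$) can be infinite. What finiteness actually requires is torsionness over $\FF_p[[T]]$, i.e.\ a $\mu=0$/uniform-boundedness statement in the second $\Z_p$-direction, and that is precisely the content to be proved, not a formal consequence of Nakayama. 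The ``alternative'' route via Proposition \ref{theorem4.14} plus the Kummer sequence conflates the two sides of the connected--\'etale dichotomy: in characteristic $p$ the Kummer sequence governs the $\mu_p$-side ($S'(\mu_p/F_\infty)\cong Cl(F_\infty)[p]$, and even there Lemma \ref{lemm4.12} needs the additional $p$-divisibility of $Cl(F_\infty)[p^\infty]$ on top of Proposition \ref{theorem4.14}), whereas $S'\big((\Z/p\Z)/F_\infty\big)$ is the Artin--Schreier/class-field-theory side, controlled by $Cl(\cdot)/p$ along norm maps rather than by $Cl(F_\infty)[p]$; the two statements are not interchangeable.

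For comparison, the paper works at the finite layers: writing $F_\infty=\bigcup_{n,m}F_{n,m}$, it uses \cite[Theorem 27.6]{st} to embed $R^S\big((\Z/p\Z)/F_{n,m}\big)$ into $\Hom\big(Cl(F_{n,m})/p,\Z/p\Z\big)$ (a class trivial at $S$ and unramified outside $S$ is unramified everywhere), and then proves that $\# Cl(F_{n,m})[p]$, hence $\# Cl(F_{n,m})/p$, is bounded independently of $n,m$ by a control theorem comparing $Cl(F_{n,m})[p]$ with $Cl(F_\infty)[p]^{\gal(F_\infty/F_{n,m})}$ --- this is where Proposition \ref{theorem4.14} enters --- and finally passes to the direct limit, which is finite because the layers are uniformly bounded. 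So the missing ingredient in your proposal is exactly this uniform bound (equivalently, the torsionness over $\FF_p[[T]]$ of the relevant unramified Iwasawa module); the containment $R^S\subseteq S'$ by itself does not shorten anything, since the unramified-class-group analysis it leads to is essentially the paper's proof.
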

\begin{proof}
	
	We have $F_\infty=\underset{n,m}\bigcup F_{n,m}$, where $\gal(F_{n,m}/F)\cong \Z/p^n\Z\times \Z/p^m\Z$. Using \cite[Theorem 27.6]{st}, we observe that \begin{small}$R^S\big((\Z/p\Z)/F_{n,m} \big)\cong \Hom(G_{S}^{\text{ab}}(F_{n,m})(p),\Z/p\Z)\inj \Hom(G_{\phi}^{\text{ab}}(F_{n,m})(p), \Z/p\Z)$\end{small}, where $G_{\phi}(F_{n,m})^{\text{ab}}(p)$ is the Galois group of the maximal abelian everywhere unramified pro-$p$ extension of $F_{n,m}$. Also,\\ $\Hom(G_{\phi}^{\text{ab}}(F_{n,m})(p), \Z/p\Z)\cong \Hom(Cl(F_{n,m})\otimes\Z_p, \Z/p\Z)\cong \Hom(Cl(F_{n,m})/p, \Z/p\Z)$. By applying a  control theorem, similar to the proof of Proposition \ref{theorem4.14}, we get that the kernel and the cokernel of the map  $Cl(F_{n,m})[p]\lra Cl(F_\infty)[p]^{\gal(F_\infty/F_{n,m})}$ are finite and bounded independently of $m$ and $n$.  As a result, $Cl(F_{n,m})[p]$ is finite and bounded independently of $m$ and $n$. Moreover, as $Cl(F_{n,m})(p)$ is finite, the same is true for $Cl(F_{n,m})/(p) $. Thus, $R^S\big((\Z/p\Z)/F_{n,m}\big)$ is  finite and bounded independently of $n$ and $m$. Hence, we conclude that $R^S\big((\Z/p\Z)/F_\infty\big)\cong \underset{n}\ilim\ R^S\big((\Z/p\Z)/F_{n,m}\big)$ is finite.
\end{proof}
\begin{lemma}\label{lemm4.12}
 $S'\big(\mu_p/F_\infty\big)$ is finite. It follows that $R^S(\mu_p/F_\infty)$ is also finite. 
\end{lemma}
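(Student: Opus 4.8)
\textbf{Proof proposal for Lemma \ref{lemm4.12}.}
The plan is to exploit the standard Kummer identification $H^1_{\mathrm{fl}}(\operatorname{Spec} R,\mu_p)\cong R^\times/(R^\times)^p$ for a local ring $R$, which gives a concrete description of $S'(\mu_p/F_\infty)$ in terms of global and local units of $F_\infty$. Writing $\mathcal{O}_{F_\infty}^S$ for the $S$-integers, the Kummer sequence produces an exact sequence relating $S'(\mu_p/F_\infty)$ to the $p$-torsion of the $S$-divisor class group of $F_\infty$ together with the unit group modulo $p$-th powers. So the first step is to write down this exact sequence, in the form already used implicitly in the proof of Proposition \ref{theorem4.14} (the commutative diagram in \cite[Page-38]{ot}): there is an exact sequence
\begin{small}
\begin{equation*}
0\to \Big(\underset{m}{\ilim}\ \tfrac{(\FF^{(p)})^\times}{((\FF^{(p)})^\times)^{p}}\Big)\to S'(\mu_p/F_\infty)\to Cl(F_\infty)[p]\to 0,
\end{equation*}
\end{small}
where the left term involves only the constant field $\FF^{(p)}=\bar\FF_p$ of $F_\infty$ inside $\bar\FF_p(t)$-type extensions — and this term vanishes since $(\FF^{(p)})^\times=(\FF^{(p)})^{\times p}$ as $\FF^{(p)}$ contains no nontrivial $p$-th root of unity and is perfect of characteristic $p$. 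Hence $S'(\mu_p/F_\infty)\cong Cl(F_\infty)[p]$.

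The second step is then to deduce finiteness of $Cl(F_\infty)[p]$. By Proposition \ref{theorem4.14}, $Cl(F_\infty)[p^\infty]^\vee$ is a finitely generated torsion $\Z_p[[H]]$ module; in particular $Cl(F_\infty)[p^\infty]\cong(\Q_p/\Z_p)^a\oplus(\text{finite})$ for some $a\ge 0$, so $Cl(F_\infty)[p]$ is finite once we know $a<\infty$, which is exactly the content of finite generation over $\Z_p[[H]]$ — actually we need to rule out a nonzero $\Z_p$-corank, but that is automatic from the proof of Proposition \ref{theorem4.14} since the argument there shows $(Cl(F_\infty)[p^\infty])^H$ is finite, which by Nakayama forces $Cl(F_\infty)[p^\infty]^\vee$ to be a finitely generated $\Z_p[[H]]$ module that is moreover torsion, and then $Cl(F_\infty)[p]$ (a subquotient controlled by the $H$-coinvariants) is finite. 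Alternatively, and more cleanly, one can mimic the proof of Lemma \ref{lemma4.11} via a control theorem: the kernel and cokernel of $Cl(F_{n,m})[p]\to Cl(F_\infty)[p]^{\gal(F_\infty/F_{n,m})}$ are finite and bounded independently of $n,m$, and $Cl(F_{n,m})[p]$ is finite for each $n,m$, so passing to the limit gives finiteness of $Cl(F_\infty)[p]$.

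The third and final step is to pass from $S'(\mu_p/F_\infty)$ to $R^S(\mu_p/F_\infty)$. Since $R^S(\mu_p/F_\infty)$ is defined by imposing local conditions at least as stringent as those defining $S'(\mu_p/F_\infty)$ at the primes outside $S$, and the local terms at the finitely many primes in $S$ only tighten further the condition (indeed $R^S(\mu_p/F_\infty)\inj S'(\mu_p/F_\infty)$, exactly as in the commutative diagram \eqref{equation34} with $*=\mu_p$), finiteness of $S'(\mu_p/F_\infty)$ immediately gives finiteness of $R^S(\mu_p/F_\infty)$.

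I do not expect a serious obstacle here: the only slightly delicate point is making sure the constant-field unit term genuinely vanishes and that Proposition \ref{theorem4.14} (or the control-theorem argument of Lemma \ref{lemma4.11}) is being quoted correctly to extract \emph{finiteness} of the $p$-torsion rather than merely finite generation of the Pontryagin dual over $\Z_p[[H]]$; the control-theorem route makes this transparent and is the safest to write out.
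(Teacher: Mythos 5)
Your first and last steps coincide with the paper's: the Kummer-theoretic exact sequence with vanishing constant-field term gives $S'(\mu_p/F_\infty)\cong Cl(F_\infty)[p]$, and the inclusion $R^S(\mu_p/F_\infty)\inj S'(\mu_p/F_\infty)$ disposes of the second assertion. The gap is in your middle step. From Proposition \ref{theorem4.14} you only know that $Cl(F_\infty)[p^\infty]^\vee$ is a finitely generated torsion $\Z_p[[H]]$-module; this does \emph{not} give $Cl(F_\infty)[p^\infty]\cong(\Q_p/\Z_p)^a\oplus(\text{finite})$, nor finiteness of $Cl(F_\infty)[p]$, because the $\mu$-invariant could be positive. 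Concretely, $M^\vee=\Z_p[[T]]/(p)\cong\FF_p[[T]]$ is finitely generated and torsion over $\Z_p[[T]]$, its coinvariants $M^\vee/(T)\cong\FF_p$ are finite (so $M^H$ is finite and your Nakayama-style patch applies verbatim), and yet $M[p]=M$ is infinite. Thus neither "finitely generated torsion over $\Z_p[[H]]$" nor finiteness of $(Cl(F_\infty)[p^\infty])^H$ rules out precisely the bad case. The missing ingredient, which is the heart of the paper's proof, is the $p$-divisibility of $Cl(F_\infty)[p^\infty]$ (equivalently, that its dual has no $p$-torsion, i.e. $\mu=0$): writing $F_\infty=\bigcup_n K_{n,\infty}$ with $K_{n,\infty}=K_n'K_\infty$, one descends by a prime-to-$p$ Galois argument from $K_{n,\infty}^{p}=K_n'\bar{\FF}_p(t)$, where $Cl(K_{n,\infty}^{p})[p^\infty]\cong(\Q_p/\Z_p)^{r_n}$ with $r_n$ bounded by the genus (\cite[Proposition 10.1.1]{nsw}); divisibility together with Proposition \ref{theorem4.14} then forces $Cl(F_\infty)[p^\infty]^\vee$ to be a finitely generated $\Z_p$-module, and only then is $Cl(F_\infty)[p]$ finite.

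Your fallback via a control theorem does not close this gap either. Since $Cl(F_\infty)[p]=\ilim_{n,m}Cl(F_{n,m})[p]$, finiteness of each layer is not enough: a direct limit of finite groups is finite only if their orders are \emph{uniformly} bounded, and such a bound across the Carlitz tower is essentially equivalent to the $\mu=0$ statement you are trying to prove (it is the nontrivial content behind \cite[Proposition 11.16]{ro} and \cite{a}). Moreover, in Lemma \ref{lemma4.11} the boundedness of $Cl(F_{n,m})[p]$ is extracted from control \emph{relative to the infinite level}; quoting it in the reverse direction (finite layers $\Rightarrow$ infinite level) is circular unless you independently supply the uniform bound, which again comes down to the divisibility argument above. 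So the proposal as written does not prove the lemma; inserting the divisibility step (or an equivalent $\mu=0$ input) is necessary.
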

\begin{proof}
	We have an exact sequence:
    
$0\lra \dfrac{(\FF^{(p)})^\times}{((\FF^{(p)})^\times)^{p}}\lra S'(\mu_{p}/F_\infty)\lra Cl(F_{\infty})[p]\lra 0.$
    
    Note that $\frac{(\FF^{(p)})^\times}{((\FF^{(p)})^\times)^{p^m}}=0$, hence $S'\big(\mu_p/F_\infty\big)\cong Cl(F_\infty)[p]$.  We have $ \widetilde{K}_1=\underset{n}\bigcup K_{n}'$, where $K\subset K_n'\subset  \widetilde{K}_1$ with $\gal (K_{n}'/K)\cong\Z/p^n\Z$. Set $K_{n,\infty}=K_{n}'K_\infty$ and $\mathcal G_n=\gal(K_{n,\infty}^{ {p}}/K_{n,\infty})$ with $K_{n,\infty}^{ {p}}=K_{n}'\bar{\FF}_p(t)$. As profinite order of $\mathcal G_n$ is prime to $p$, following a standard diagram chase using the definition of $S'(\mu_{p^\infty}/-)$, we get  $S'(\mu_{p^\infty}/K_{n,\infty}^{ {p}})^{\mathcal{G}_n}\cong S'(\mu_{p^\infty}/K_{n,\infty})$. Similarly, we obtain that $Cl(K_{n,\infty})[p^\infty]\cong {Cl(K_{n,\infty}^{ {p}})[p^\infty]}^{\mathcal G_n}$.
	Let $C_{K_{n,\infty}^{ {p}}}$ be a proper smooth geometrically connected curve which is the model of the function field $K_{n,\infty}^{ {p}}$. On the other hand, \cite[Proposition 10.1.1]{nsw}, we observe that  $Cl(K_{n,\infty}^{ {p}})[p^\infty]\cong (\Q_p/\Z_p)^{r_n}$, where $0\leq r_n\leq \text{genus} (C_{K_{n,\infty}^{ {p}}})$.  Therefore $Cl(F_\infty)[p^\infty]\cong\underset{n}\ilim\ Cl(K_{n,\infty}^{p})[p^\infty]$ is $p$-divisible. By Proposition \ref{theorem4.14}, $Cl(F_\infty)[p^\infty]^{\vee}$ is also a  finitely generated torsion $\Z_p[[H]]\cong\Z_p[[T]]$ module. Consequently, $S'\big(\mu_p/F_\infty\big)\cong Cl(F_\infty)[p]$ is finite. As $R^S(\mu_p/F_\infty)\inj S'\big(\mu_p/F_\infty\big)$ (see Definition  \ref{definition4.4}), $R^S(\mu_p/F_\infty)$ is finite as well.
\end{proof}
\begin{proposition}\label{proposition4.13}
	Recall  $H=\gal(F_\infty/K_\infty)$. Then $R^S(E/F_\infty)^\vee$ is a finitely generated  $\Z_p[[H]]$ module and in particular, it is  $\Z_p[[G]]$ torsion. 
\end{proposition}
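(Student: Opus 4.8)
The plan is to descend modulo $p$ and exploit the connected--\'etale decomposition of $E[p]$, thereby reducing the assertion to the finiteness results for $\mu_p$ and $\Z/p\Z$ proved in Lemmas~\ref{lemma4.11} and~\ref{lemm4.12} (and running parallel to Theorem~\ref{theo:muinvariant} over $K_\infty$). Since $\Z_p[[H]]$ is a regular Noetherian local ring with maximal ideal $\mathfrak m=(p,I_H)$, the topological Nakayama lemma says that $R^S(E/F_\infty)^\vee$ is a finitely generated $\Z_p[[H]]$-module if and only if $R^S(E/F_\infty)^\vee/\mathfrak m$, equivalently $R^S(E/F_\infty)[p]^H$, is finite; so this is the whole task. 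The multiplication--by--$p$ sequence $0\to E[p]\to E_{p^\infty}\xrightarrow{p}E_{p^\infty}\to 0$ gives a natural map $R^S(E[p]/F_\infty)\to R^S(E/F_\infty)[p]$ whose kernel and cokernel are bounded by $E_{p^\infty}(F_\infty)/p$ and its local counterparts at the primes of $S$, which are cofinitely generated over $\Z_p$; so (arguing as in Lemma~\ref{lemma4.10} over $K_\infty$) it is enough to prove that $R^S(E[p]/F_\infty)^H$ is finite.

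To do this I would repeat the diagram chase from the proof of Proposition~\ref{prop4.13}, now over $F_\infty$. As $E/K$ is ordinary, the connected--\'etale sequence $0\to E[p]^0\to E[p]\to\pi_0(E[p])\to 0$ reads $0\to\mu_p\to E[p]\to\Z/p\Z\to 0$, and it induces over $F_\infty$ and over each completion $F_{\infty,w}$ the corresponding three rows of flat cohomology groups with the $S$-fine local conditions. Applying the snake lemma to the $\mu_p$-row --- using that for $v\notin S$ the map on unramified quotients is injective by \cite[\S III.7]{m1}, and that $H^0_{\text{fl}}(F_\infty,\Z/p\Z)=\Z/p\Z$ --- produces a homomorphism $R^S(\mu_p/F_\infty)\to R^S(E[p]/F_\infty)$ with finite kernel and with cokernel sitting in an extension of a subgroup of $R^S\big((\Z/p\Z)/F_\infty\big)$ by a quotient of the local kernel $\bigoplus_{w\mid v,\,v\in S}\ker(\gamma_w)$. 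By Lemma~\ref{lemm4.12} the group $R^S(\mu_p/F_\infty)$ is finite, and by Lemma~\ref{lemma4.11} the group $R^S\big((\Z/p\Z)/F_\infty\big)$ is finite; taking $H$-invariants of the resulting four-term exact sequence (and using that $H^1(H,-)$ of a finite module is finite) one then concludes that $R^S(E[p]/F_\infty)^H$ is finite, hence by the first paragraph that $R^S(E/F_\infty)^\vee$ is finitely generated over $\Z_p[[H]]$. The ``in particular'' clause is immediate: a finitely generated module over $\Z_p[[H]]$, viewed over the two-variable power series ring $\Z_p[[G]]=\Z_p[[H]][[\Gamma]]$ with $\Gamma=G/H\cong\Z_p$, is automatically $\Z_p[[G]]$-torsion (cf.\ Proposition~\ref{prop5.3} and Venjakob's Theorem~\ref{theo:pseudo}).

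The step I expect to require the most care is the control of the local kernels $\bigoplus_{w\mid v}\ker(\gamma_w)$ at the finitely many primes $v\in S$: such a prime can split into infinitely many primes in the $\Z_p^2$-tower $F_\infty/K$, so a priori this is an infinite direct sum of copies of $\Z/p\Z$ on which $H$ merely permutes the summands. What one needs is that it has only finitely many $H$-orbits --- so that its $H$-invariants and its $H^1$ are finite --- which amounts to a transversality condition between $H$ and the decomposition subgroups $D_v\subseteq G$, to be verified in the concrete geometry of the arithmetic and Carlitz $\Z_p$-extensions. Alternatively one may sidestep it using the inclusion $R^S(E/F_\infty)\hookrightarrow S(E/F_\infty)$: dualising, $R^S(E/F_\infty)^\vee$ is a quotient of $S(E/F_\infty)^\vee$, so $\Z_p[[H]]$-finite generation of $R^S(E/F_\infty)^\vee$ would follow from the corresponding control statement for the full Selmer group. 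Apart from this local bookkeeping, the argument is the exact mirror of the proof of Proposition~\ref{prop4.13}.
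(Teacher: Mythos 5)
Your overall strategy is genuinely different from the paper's, and as written it has a gap at exactly the point you flag. Your second paragraph concludes that $R^S(E[p]/F_\infty)^H$ is finite from the mod-$p$ connected--\'etale diagram chase, but that conclusion needs control of the local error terms $\bigoplus_{w\mid v,\,v\in S}\ker(\gamma_w)$ over $F_\infty$, where a prime $v\in S$ may split into infinitely many places; this is precisely why the paper's Theorem \ref{pseudonullity}, which runs this very chase over $F_\infty$, imposes the extra hypothesis that $E$ has good reduction outside the unique prime $v_r$ ramified in $K_\infty'$ (so that only finitely many places of $F_\infty$ intervene). You acknowledge the issue but leave the required ``transversality'' unverified, and your fallback via the full Selmer group only relocates the problem: no statement that $S(E/F_\infty)^\vee$ is finitely generated over $\Z_p[[H]]$ is available in the paper. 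The gap is fillable: since $F_\infty^{H}=K_\infty$ is the constant-field $\Z_p$-extension, every place of $K$ is finitely decomposed in $K_\infty$, so the $H$-orbits of places of $F_\infty$ above any $v\in S$ are finite in number; moreover everything in the $E[p]$-diagram is killed by $p$, so the Pontryagin duals of the local kernels and of their quotients are finitely generated $\FF_p[[H]]$-modules, whose coinvariants are finite. With these two observations your reduction (Nakayama with $\mathfrak m=(p,I_H)$, comparison of $R^S(E[p]/F_\infty)$ with $R^S(E/F_\infty)[p]$, and the chase using Lemmas \ref{lemm4.12} and \ref{lemma4.11}) can be pushed through, but none of this bookkeeping is in your write-up.

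For comparison, the paper's proof of Proposition \ref{proposition4.13} avoids the mod-$p$ argument over $F_\infty$ altogether: by Theorem \ref{theo:muinvariant}, $R^S(E/K_\infty)^\vee$ is finitely generated over $\Z_p$; a control theorem for $F_\infty/K_\infty$ shows that the kernel and cokernel of $(R^S(E/F_\infty)^\vee)_H\to R^S(E/K_\infty)^\vee$ are finitely generated $\Z_p$-modules (here the error terms live over the finitely many places of $K_\infty$ above $S$, so no infinite splitting enters); hence $(R^S(E/F_\infty)^\vee)_H$ is finitely generated over $\Z_p$ and Nakayama with the augmentation ideal alone gives finite generation over $\Z_p[[H]]$, the torsion statement over $\Z_p[[G]]$ then being immediate as you say (Theorem \ref{theo:pseudo}). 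In other words, the paper descends to the arithmetic layer and reuses the already-proved $\mu=0$ result there, whereas you re-prove a mod-$p$ version of that result directly over $F_\infty$; the latter is workable but strictly harder, and it is the route the paper reserves (with restricted $S$) for the stronger pseudonullity Theorem \ref{pseudonullity}.
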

\begin{proof}By Theorem \ref{theo:muinvariant}, $R^S(E/K_\infty)^\vee$ is a finitely generated $\Z_p$ module. It is easy to  see that the kernel and the cokernel of the map $(R^S(E/F_\infty)^\vee)_H\lra R^S(E/K_{\infty})^\vee$ are finitely generated $\Z_p$-modules. Hence by Nakayama lemma, $R^S(E/F_\infty)^\vee$ is a finitely generated  $\Z_p[[H]]$ module.
\end{proof}
\begin{theorem}\label{pseudonullity}
	Let $F_\infty=K_\infty  \widetilde{K}_1$ be the $\Z_p^2$ extension of $K$,  defined in \S\ref{sec4.2} and let 
	$v_{r}$ be the unique prime of $K$ that ramifies in  $\widetilde{K}_1$. Assume $E/K$ be an ordinary elliptic curve which  has good{, ordinary} reduction outside $\{v_r\}$ and $S$ be as specified before. Then $R^S(E/F_\infty)^\vee$ is a pseudonull $\Z_p[[G]]$ module.
\end{theorem}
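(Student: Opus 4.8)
By Theorem \ref{theo:pseudo} (Venjakob's criterion), since $R^S(E/F_\infty)^\vee$ is a finitely generated $\Z_p[[H]]$ module by Proposition \ref{proposition4.13}, it suffices to prove that $R^S(E/F_\infty)^\vee$ is $\Z_p[[H]]$-torsion, equivalently $\corank_{\Z_p[[H]]}(R^S(E/F_\infty)) = 0$. The strategy is to pass to the mod $p$ fine Selmer group and compare with the divisor class group of $F_\infty$, exactly along the lines of the proof of Theorem \ref{theo:muinvariant} but now over the bigger field $F_\infty$. Concretely, I would first observe, as in Lemma \ref{lemma4.10}, that the kernel and cokernel of the natural map $R^S(E[p]/F_\infty) \to R^S(E/F_\infty)[p]$ are finite, so that $\mu\big(R^S(E/F_\infty)^\vee\big) = 0$ (as a $\Z_p[[H]]$-module, i.e.\ the module is $\Z_p[[H]]$-torsion and its $\mu$-invariant vanishes) if and only if $R^S(E[p]/F_\infty)$ is finite. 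Thus the whole theorem reduces to the single finiteness statement: $R^S(E[p]/F_\infty)$ is finite.

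\textbf{Key steps.} To prove $R^S(E[p]/F_\infty)$ is finite, I would use the connected-\'etale sequence \eqref{equn4.11} for the ordinary elliptic curve $E$, namely $0 \to \mu_p \to E[p] \to \Z/p\Z \to 0$, together with the analogous local sequences, and assemble the commutative diagram of complexes exactly as in \eqref{equation34} but with $K_\infty^p$ replaced by $F_\infty$. This sandwiches $R^S(E[p]/F_\infty)$ between $R^S(\mu_p/F_\infty)$ and $R^S\big((\Z/p\Z)/F_\infty\big)$: a snake-lemma argument on the resulting (non-exact) diagram produces a map $\ker(\gamma) \to \coker(\beta)$ whose cokernel injects into $R^S\big((\Z/p\Z)/F_\infty\big)$, where $\beta$ involves $R^S(\mu_p/F_\infty)$ and $\gamma$ is the map on the local terms. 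Lemma \ref{lemm4.12} gives that $R^S(\mu_p/F_\infty)$ is finite (via $S'(\mu_p/F_\infty) \cong Cl(F_\infty)[p]$ and Proposition \ref{theorem4.14}), and Lemma \ref{lemma4.11} gives that $R^S\big((\Z/p\Z)/F_\infty\big)$ is finite (via control for $Cl(F_{n,m})[p]$ and again Proposition \ref{theorem4.14}). It remains to check that $\ker(\gamma)$ is finite: for primes $w \mid v$ with $v \notin S$ the local maps are injective by \cite[\S III.7]{m1} (the flat-cohomology over $O_w$ comparison), and for the finitely many $w \mid v$ with $v \in S$ the kernel is finite because $\mu_p(F_{\infty,w})$ and $E_{p^\infty}(F_{\infty,w})$ are finite. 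Chaining these finiteness statements through the diagram yields that $R^S(E[p]/F_\infty)$ is finite, hence $R^S(E/F_\infty)^\vee$ is $\Z_p[[H]]$-torsion, hence pseudonull over $\Z_p[[G]]$ by Theorem \ref{theo:pseudo}.

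\textbf{Main obstacle.} The delicate point is the finiteness of the divisor-class-group inputs over the genuinely two-variable field $F_\infty$, i.e.\ Proposition \ref{theorem4.14} and its use in Lemmas \ref{lemma4.11} and \ref{lemm4.12}: one needs that $Cl(F_\infty)[p^\infty]^\vee$ is a finitely generated torsion $\Z_p[[H]]$-module, which rests on the classical finiteness of $Cl(K_\infty')[p^\infty]$ (from \cite[Proposition 2]{a} together with \cite[Proposition 11.16]{ro}) propagated up the $H$-tower by a control argument, and on the vanishing $\tfrac{(\FF^{(p)})^\times}{((\FF^{(p)})^\times)^{p^m}} = 0$ which kills the ``unit'' contribution in the Kummer sequence for $\mu_{p^\infty}$. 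Once those class-group facts are in hand, the rest is a formal diagram chase combining the connected-\'etale sequence with the control theorems, and the passage from ``$R^S(E[p]/F_\infty)$ finite'' to ``$R^S(E/F_\infty)^\vee$ pseudonull'' is immediate from Lemma \ref{lemma4.10}-type reasoning and Theorem \ref{theo:pseudo}. I would also note that the hypothesis that $E$ has good reduction outside $\{v_r\}$ is what allows one to take $S = \{v_r\}$ (up to the harmless enlargement discussed in Remark \ref{remark2.2}), so that the ramification locus of $F_\infty/K$ is contained in $S$ and the control theorems of \cite{bl1}, \cite{kt}, \cite{ot} apply cleanly.
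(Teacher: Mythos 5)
Your proposal is correct and follows essentially the same route as the paper's proof: reduce, via Proposition \ref{proposition4.13}, the mod-$p$ comparison map $R^{S}(E[p]/F_\infty)\to R^{S}(E/F_\infty)[p]$ and Nakayama, to the finiteness of $R^{S'}(E[p]/F_\infty)$ with $S'=\{v_r\}$; establish that finiteness from the connected-\'etale diagram over $F_\infty$ using Lemmas \ref{lemm4.12} and \ref{lemma4.11} together with the finiteness of $\ker(\gamma)$; and conclude pseudonullity by Theorem \ref{theo:pseudo}. The only cosmetic difference is that the paper treats a general $S$ via the inclusion $R^{S}(E/F_\infty)\subset R^{S'}(E/F_\infty)$ for $S\supseteq S'$, which is the precise form of your ``harmless enlargement'' remark.
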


\begin{proof}
Assume that the action of $G_K$ on $\pi_0(E[p])$ is trivial.
Put $S':$=$\{v_r\}$. 
    Using ordinarity of $E/K$ { and the fact that $E$ has ordinary reduction at every prime except (possibly) at $v_r$}, from \eqref{equn4.11}, we obtain a complex (not necessarily exact):
\begin{small}{\begin{equation}R^{S'}(\mu_p/F_\infty)\lra R^{S'}(E[p]/ F_\infty)\lra R^{S'}\big((\Z/p\Z)/ F_\infty\big).\end{equation}}
\end{small}	
Then,  from the definition of $ R^{S'}(\textunderscore /F_\infty)$, there is a commutative diagram (not necessarily exact):
	\begin{small}
		\begin{equation}
			\begin{tikzcd}
				0\arrow{r}
				&R^{S'}\big((\Z/p\Z)/F_\infty\big)\arrow{r}
				&H^1_{\fla}(F_\infty, \Z/p\Z)\arrow{r}
				&\underset{w\mid v_r} \prod H^1_{\fla}(F_{\infty,w},\Z/p\Z) \underset{w\nmid v_r }\prod \frac {H^1_{\fla}(F_{\infty,w},\Z/p\Z)} {H^1_{\fla}(O_{F_{\infty,w}},\Z/p\Z)}
				\\
				0\arrow{r}
				&R^{S'}(E[p]/F_\infty)\arrow{r}\arrow{u}{\alpha}
				&H^1_{\fla}(F_\infty, E[p]) \arrow{r}\arrow{u}
				&\underset{w\mid v_r}\prod H^1_{\fla}(F_{\infty,w},E[p])\underset{w\nmid v_r}\prod \frac {H^1_{\fla}(F_{\infty,w},E[p])} {H^1_{\fla}(O_{F_{\infty, w}},\mathcal E[p])}\arrow{u}
				\\
				0\arrow{r}
				&	R^{S'}(\mu_p/F_\infty)\arrow{r}\arrow{u}{\beta}
				& H^1_{\fla}(F_\infty,\mu_p)	\arrow{r}\arrow{u}
				&\underset{w\mid v_r} \prod H^1_{\fla}(F_{\infty,w},\mu_p) \underset{w\nmid v_r}\prod \frac {H^1_{\fla}(F_{\infty,w},\mu_p)} {H^1_{\fla}(O_{F_{\infty, w}},\mu_p)}\arrow{u}{\gamma=\underset{w}\prod\gamma_w}
			\end{tikzcd}
		\end{equation}
	\end{small}
Here $w$ denote a place of $F_\infty$. Note that $R^{S'}(\mu_p/F_\infty)$ and $R^{S'}\big((\Z/p\Z)/F_\infty\big)$ are finite by Lemmas \ref{lemm4.12} and \ref{lemma4.11}, respectively.   Note that there are only finitely many primes of $F_\infty$ lying above  $v_r$ in $F_\infty$. Now, from the proof of Proposition \ref{prop4.13}, we get that $\ker(\gamma)$ is finite. Again using a diagram  chase, as in the proof of Proposition \ref{prop4.13}, we get that $R^{S'}(E[p]/F_\infty)$ is finite.

    Now, if the action of $G_K$ on $\pi_0(E[p])$ is non-trivial then we move to a finite extension of $L$ of $K$, where the action of $G_L$ on $\pi_0(E[p])$ is trivial. Then arguing as above, we show that $R^{S'}(E[p]/LF_\infty)$ is finite, which in turn shows that $R^{S'}(E[p]/F_\infty)$ is finite as well.
	
	Next, it is easy to see that the kernel and the cokernel of the natural map  \begin{small}$R^{S'}(E[p]/F_\infty)\rightarrow R^{S'}(E/F_\infty)[p]$ \end{small} are finite. Thus \begin{small}$R^{S'}(E/F_\infty)^\vee/(pR^{S'}(E/F_\infty)^\vee)$\end{small} is finite. Then, by  Nakayama lemma and Proposition \ref{proposition4.13},  $R^{S'}(E/F_\infty)^\vee$ is a finitely generated torsion $\Z_p[[H]]$ module and hence  $\Z_p[[G]]$  pseudonull (Theorem \ref{theo:pseudo}).
	
 Now for a finite set $S$ of primes of $K$ containing $S'$=$\{v_r\}$, such that $E/K$ has good reduction outside $v_r$, $R^{S}(E/F_\infty)\inj R^{S'}(E/F_\infty)$. Consequently, $R^S(E/F_\infty)^\vee$ is a pseudonull $\Z_p[[G]]$ module.
\end{proof}
\begin{corollary}\label{cormu}
Let us keep the setting and hypotheses of Theorem \ref{pseudonullity}. Then $R^S(E/ \widetilde{K}_1)^\vee$ is a finitely generated $\Z_p$ module.
\end{corollary}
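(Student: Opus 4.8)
The plan is to deduce this from the finiteness of $R^{S'}(E[p]/F_\infty)$, with $S'=\{v_r\}$, which was obtained in the course of proving Theorem \ref{pseudonullity}, by descending along the geometric $\Z_p$-extension $K_\infty'$ and then invoking Nakayama's lemma. Write $H':=\gal(F_\infty/K_\infty')$. Since $K_\infty/K$ is everywhere unramified while $K_\infty'/K$ is ramified at $v_r$, these are distinct $\Z_p$-extensions, so $K_\infty\cap K_\infty'=K$ and hence $H'\cong\gal(K_\infty/K)\cong\Z_p$; in particular $\mathrm{cd}_p(H')=1$.

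\textbf{Step 1: a control theorem for $E[p]$.} I would first show that the natural restriction map $R^{S'}(E[p]/K_\infty')\lra R^{S'}(E[p]/F_\infty)^{H'}$ has finite kernel and cokernel. This follows by the usual snake-lemma argument applied to the commutative diagram comparing the defining complexes of $R^{S'}(E[p]/K_\infty')$ and $R^{S'}(E[p]/F_\infty)$ in Definition \ref{definition4.4}: the global error term is controlled by $H^j(H',E[p](F_\infty))$ and the local error terms, at the finitely many places $w$ of $K_\infty'$ above a place $v$, by $H^1(H'_w,E[p](F_{\infty,w}))$; all of these are finite because $E[p]$ is a finite group scheme and $\mathrm{cd}_p(H')=1$, the requisite finiteness of the flat cohomology of $E[p]$ over local and global function fields being standard (cf. \cite{bl1}, \cite{kt}). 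Since $R^{S'}(E[p]/F_\infty)$ is finite, its $H'$-invariants are finite, hence $R^{S'}(E[p]/K_\infty')$ is finite. Exactly as in Lemma \ref{lemma4.10}, the kernel and cokernel of the natural map $R^{S'}(E[p]/K_\infty')\lra R^{S'}(E/K_\infty')[p]$ are then finite, so $R^{S'}(E/K_\infty')[p]$ is finite; dually, $N/pN$ is finite, where $N:=R^{S'}(E/K_\infty')^\vee$.

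\textbf{Step 2: Nakayama.} Next I would note that $N$ is finitely generated over $\Lambda':=\Z_p[[\gal(K_\infty'/K)]]\cong\Z_p[[T]]$. Indeed $R^{S'}(E/K)^\vee$ is finitely generated over $\Z_p$ (as $R^{S'}(E/K)$ is a subgroup of the cofinitely generated group $H^1_{\mathrm{fl}}(U,\mathcal E_{p^\infty})$ by Remark \ref{remark2.2}), and the control theorem from $K$ — the map $R^{S'}(E/K)\to R^{S'}(E/K_\infty')^{\gal(K_\infty'/K)}$ has finite kernel and cokernel, analogous to \cite[Theorem 4.4]{bl1} and as used in Proposition \ref{thm:control} — together with Nakayama's lemma shows $N$ is finitely generated over $\Lambda'$. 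For a finitely generated $\Z_p[[T]]$-module $N$, finiteness of $N/pN$ forces $N$ to be finitely generated over $\Z_p$: by the structure theorem $N/pN$ finite excludes a free part and $p$-power-torsion summands, and forces the $f$-torsion summands to have $f$ prime to $p$, so $N$ is pseudo-isomorphic to a finitely generated $\Z_p$-module, and a finitely generated pseudonull $\Lambda'$-module is finite. Hence $R^{S'}(E/K_\infty')^\vee$ is finitely generated over $\Z_p$. Finally, since $E/K$ has good reduction outside $v_r$, any admissible $S$ in the statement contains $S'=\{v_r\}$, so $R^S(E/K_\infty')\hookrightarrow R^{S'}(E/K_\infty')$ by Remark \ref{remark2.2}; therefore $R^S(E/K_\infty')^\vee$, being a quotient of $R^{S'}(E/K_\infty')^\vee$, is finitely generated over $\Z_p$ as well.

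The main obstacle is making the two control arguments of Step 1 (the one for $E[p]$ along $H'$, and the mod-$p$ comparison in the style of Lemma \ref{lemma4.10}) precise in the flat-cohomology framework, in particular verifying the finiteness of the local cohomology of $E[p]$ at the ramified prime $v_r$ and at the finitely many places above it. However, this is the same bookkeeping already carried out in the proof of Theorem \ref{pseudonullity}, so no genuinely new input is required.
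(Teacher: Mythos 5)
Your argument is correct and takes essentially the same route as the paper: set $S'=\{v_r\}$, descend the finiteness of $R^{S'}(E[p]/F_\infty)$ (obtained in the proof of Theorem \ref{pseudonullity}) to $K_\infty'$ by a control theorem along $\gal(F_\infty/K_\infty')$, compare $R^{S'}(E[p]/K_\infty')$ with $R^{S'}(E/K_\infty')[p]$, conclude by Nakayama, and pass to general $S$ via the inclusion $R^S(E/K_\infty')\subset R^{S'}(E/K_\infty')$. The only (harmless) difference is your Step 2: the paper applies topological Nakayama over $\Z_p$ directly to $N=R^{S'}(E/K_\infty')^\vee$ once $N/pN$ is known to be finite, so the detour through finite generation over $\Z_p[[T]]$ and the structure theorem is not needed.
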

\begin{proof}
	Again, put $S'=\{v_r\}$.
	Applying a control theorem $R^{S'}(E[p]/ \widetilde{K}_1)\lra R^{S'}(E[p]/F_\infty)^{\gal(F_\infty/ \widetilde{K}_1)}$ and using Theorem \ref{pseudonullity}, we deduce that $R^{S'}(E[p]/ \widetilde{K}_1)$ is finite.  Similarly,  the kernel and the cokernel of the natural map  $R^{S'}(E[p]/ \widetilde{K}_1)\lra R^{S'}(E/ \widetilde{K}_1)[p]$ are finite and thus  $R^{S'}(E/ \widetilde{K}_1)^\vee/(pR^{S'}(E/ \widetilde{K}_1)^\vee)$ is finite. Then by Nakayama lemma, $R^{S'}(E/ \widetilde{K}_1)^\vee$ is  a finitely generated $\Z_p$ module. Finally, for a general $S$, the argument extends as in the proof of Theorem \ref{pseudonullity}.
\end{proof}
{
\begin{remark}
\begin{enumerate}
\item Let $p=2$ and $K=\FF_2(t)$.  Let $E/K$ be given by the Weierstrass equation:
	\begin{small}
		\begin{equation}
			y^2 + xy = x^3 + (1/t) x^2 + 1.
		\end{equation}	
	\end{small}
\noindent Then $E$ has bad reduction only at the prime $(t)$ of $K$ and is an ordinary elliptic curve over $K$. From \cite[\S 5.4]{si}, it is easy to see that $E$ has good, ordinary reduction at all primes of $K$, except the prime $(t)$.
\item Let $K=\FF(t)$ be  a function field over the finite field $\FF$ of char $p$. Let $E/\FF$ be an ordinary elliptic curve. These elliptic curves have good ordinary reduction at all primes of $K$.
\end{enumerate}
    In both these examples all the hypotheses of Theorem \ref{pseudonullity} are satisfied. Hence, for both the examples,  $R^S(E/F_\infty)^\vee$ is a pseudonull $\Z_p[[G]]$ module.
\end{remark}
 }

{\bf Zero  Selmer group:} In this function field of characteristic $p$ setting, we discuss `$p^\infty$-zero  Selmer group' or `$\Sh^1(E_{p^\infty}/L)$' of an elliptic curve $E$ defined over an algebraic extension $L/K$ and compare it with $R^S(E_{p^\infty}/L)$.
\begin{definition}
	Define the `$p^\infty$-zero  Selmer group' of  $E$ over a finite extension $L/K$ as:
	\begin{small}{
\begin{equation*}
	R_0(E/L):= \ker\big(H^1_{\fla}(L, E_{p^\infty})\lra \underset{w\in \Sigma_L}{\prod} K_v^1(E/L)\big).
	\end{equation*}}\end{small}
Let $K\subset L_\infty\subset K_S$ be an infinite extension.
		We define \begin{small}{$R_0(E/L_\infty):= 
		\underset{K\subset F\subset L_\infty}\ilim\ R_0(E/F)$}\end{small}, where $F$ varies over finite extensions of $K$.
\end{definition}
In the following remark, we discuss various properties of  $R_0(E/L)$.
\begin{remark}\label{remark0selmer}
\begin{enumerate}[(i)]
	
		\item Recall $S$ is a finite set of primes of $K$ containing all the primes of bad reduction of $E/K$. Then for any algebraic extension $K\subset L_\infty \subset K_S$, clearly $R_0(E/L_\infty)\inj R^S(E/L_\infty)$, for any such $S$.

	Hence, under the setting of Theorem \ref{theo:muinvariant} (respectively Corollary \ref{cormu}),		
		 we can deduce from the same theorem (resp. corollary) that $R_0(E/K_\infty)^\vee$   (respectively $R_0(E/ \widetilde{K}_1)^\vee$) is a finitely generated $\Z_p$ module. Similarly, from  Theorem  \ref{pseudonullity}, we obtain that $R_0(E/F_\infty)^\vee$ is a pseudo-null $\Z_p[[G]]$ module.
	 \item  For a finite extension  $L/K$, the $p^n$-zero Selmer group is defined as:
	  \begin{equation*}
    R_0(E[p^n]/L):= \ker\big(H^1_{\fla}(L, E[p^n])\lra \underset{w\in \Sigma_L}\prod  H^1(L_w, E[p^n])\big).
\end{equation*}
		In fact, $R_0(E[p^n]/L)= 0$ in most cases (see \cite[Proposition 6.1]{bklpr} for a precise statement).
\end{enumerate}
\end{remark}
\subsection{Pseudonullity in $\Z_p^d$ extensions.}
Let $r\geq 1$. We consider $\Z_p^r$ extensions  $\widetilde{K}_r$ of $K$,
 constructed using Carlitz modules \cite[Chapter-12]{ro}, which are ramified only at one prime  of $K$ and it is totally ramified at that prime \cite[Proposition 12.7]{ro}. 
Consider the following set up:
\begin{equation}\label{diag:fieldextensions}
\begin{tiny}
    \begin{tikzcd}
	& {\mathcal L_\infty} \\
	\\
	&&& { \widetilde{K}_{d-1}} \\
	{F_\infty} \\
	&& { \widetilde{K}_1} \\
	{K_\infty} \\
        \\
	&& K
	\arrow[draw=none, from=1-2, to=8-3]
	\arrow[no head, from=3-4, to=1-2]
	\arrow[no head, from=4-1, to=1-2]
	\arrow[no head, from=6-1, to=4-1]
	\arrow[no head, from=8-3, to=3-4]
	\arrow[no head, from=8-3, to=6-1]
	\arrow[no head, from=5-3, to=4-1]
	\arrow[no head, from=5-3, to=3-4]
	\arrow[no head, from=8-3, to=5-3]
    \arrow[no head, "H'", bend right=30, from=1-2, to=4-1]
    \arrow[no head, "H", bend right=60, from=1-2, to=6-1]
\end{tikzcd}
\end{tiny}
\end{equation}

  Set $d\geq 3$. Let $K_\infty$ denote the unramified $\Z_p$ extension. Consider a $\Z_p^{d-1}$ extension $ \widetilde{K}_{d-1}$ of $K$ constructed from Carlitz modules such that the extension $ \widetilde{K}_{d-1}/K$  is unramified at all but one  prime ideal $\mathfrak{P}$ of $K$. Further the prime ideal $\mathfrak{P}$  is totally ramified in $ \widetilde{K}_{d-1}/K$. Put
$\mathcal{L}_\infty:=K_\infty  \widetilde{K}_{d-1}$.
As in the previous subsection, $F_\infty$ denotes a $\Z_p^2$ extension of $K$ which is the compositum of the uramified $\Z_p$ extension $K_\infty$ of $K$ and a $\Z_p$ extension $ \widetilde{K}_1 $ of $K$ contained in $ \widetilde{K}_{d-1}$.
 Set    
$G:=\gal(\calL_\infty/K)\cong \Z_p^d$, $H=\gal(\calL_\infty/K_\infty)\cong \Z_p^{d-1}$ and $H'=\gal(\calL_\infty/F_\infty)\cong \Z_p^{d-2}$. This setup is presented in the diagram \eqref{diag:fieldextensions}.

\begin{theorem}\label{theo:pseudomain3}
Let $d>2$ and recall $K=\FF(t)$, where $\FF$ is a finite field of char $p$. Let
$\calL_\infty=K_\infty  \widetilde{K}_{d-1}$ be a $\Z_p^d$ extension of $K$ as considered in the diagram \eqref{diag:fieldextensions}. Put $G:=\gal(\calL_\infty/K) $ and let $\nu_{ram}$ be the unique prime of $K$ that ramifies in $\calL_\infty$. Consider a finite set  $S$ of primes of $K$ containing  $\nu_{ram}$. 
	
	Assume that $E/K$ is an ordinary elliptic curve that has good, ordinary reduction outside $\nu_{ram}$. Then $R^S(E/\calL_\infty)^\vee$ is a pseudonull  $\Z_p[[G]]$ module.
\end{theorem}
\begin{proof}
Let $H=\gal(\calL_\infty/K_\infty)$ and $H'=\gal(\calL_\infty/F_\infty)$. Then $H/H'$ is a $p$-adic Lie group of dimension 1. Now, we outline the key steps in the proof.
\begin{enumerate}
    \item First we show that  $R^S(\mu_p/\calL_\infty)^\vee$ is a finitely generated $\Z/p\Z[[H']]$ module. The proof is similar to the proof of Proposition \ref{theorem4.14} and we only give an outline.

    \noindent Consider the following short exact sequence from \cite[Page-38]{ot}:
    \[
    0\lra  \dfrac{(\FF^{(p)})^\times}{((\FF^{(p)})^\times)^{p}}\lra S'(\mu_{p}/ T_\infty)\lra  Cl( T_\infty)[p] \lra 0,
    \]
    where $T_\infty \in \{F_\infty,\ \calL_\infty\}$.  

Note that $\dfrac{(\FF^{(p)})^\times}{((\FF^{(p)})^\times)^p} = 0$, so we obtain an isomorphism $S'(\mu_p/ T_\infty ) \cong Cl( T_\infty)[p]$. Similar to Proposition \ref{theorem4.14}, we apply a control theorem for the map
  $S'(\mu_p/F_\infty) \lra S'(\mu_p/\calL_\infty)^{H'}$. Then, using  the finiteness of $Cl(F_\infty)[p]$ \cite[Proposition 2]{a} together with the Nakayama lemma, we conclude that $S'(\mu_p/\calL_\infty)$ is a cofinitely generated $\Z/p\Z[[H']]$ module. 
     
    \item Next we show that the $R^S((\Z/p\Z)/ \calL_\infty)^\vee$ is a finitely generated $\Z/p\Z[[H']]$ module.  
    For this, we argue as in Lemma \ref{lemma4.11} to obtain that $R^S((\Z/pZ)/\calL_\infty)$ $\cong \Hom(Cl(\calL_\infty)/p, \Z/p\Z)$. The proof now follows from the part (1) above.
    
    \item Finally, we  consider the exact sequence for $E/K$
    \[
    0\lra \mu_p\lra E[p]\lra \Z/p\Z\lra 0.
    \]
This will give us a complex  over $\calL_\infty$, similar to the diagram \ref{diagram36} appearing in the proof of Theorem \ref{pseudonullity} . Then, from steps (1) and (2) along with a diagram chase, it is easy to see that $R^{S'}(E/\calL_\infty)^\vee/(p)$ is a finitely generated  $\Z/p\Z[[H']]$ module. By Nakayama lemma, $R^{S'}(E/\calL_\infty)^\vee$ is a finitely generated torsion $\Z_p[[H]]$ module. 
\end{enumerate}
Now for a finite set $S$ of primes of $K$ containing $S'$=$\{v_r\}$, note that $R^{S'}(E/\calL_\infty)^\vee$ surjects onto $R^{S}(E/\calL_\infty)^\vee$. Hence $R^{S}(E/\calL_\infty)^\vee$ is a pseudonull $\Z_p[[G]]$ module. 
\end{proof}

\section*{acknowledgement}
It is a pleasure to thank Taksahi Suzuki for various comments and suggestions for improvement in \S\ref{section5} of the article. { We thank the anonymous referee for various comments and suggestions which helped us to improve  the article.}  Somnath Jha acknowledges support of ANRF grant CRG/2022/005923. 

\begin{small}
	
\end{small}
\end{document}